\DeclareMathOperator{\tdeg}{tdeg}
\def \lmk {l_{\mathbf{m},k}}
\def \Q {{\mathbb Q}}
\def \W {{\mathbb W}}
\def \Z {{\mathbb Z}}
\def \0 {{\mathbf 0}}
\newtheorem{theorem}{Theorem}[section]
\newtheorem{cor}[theorem]{Corollary}
\newtheorem{lemma}{Lemma}[section]
\newtheorem{pro}[theorem]{Proposition}
\newtheorem{definition}[theorem]{Definition}
\newtheorem{ex}[theorem]{Example}
\date{}
\title{Fixed Divisor of a Multivariate Polynomial and Generalized Factorials in 
Several Variables}
\author {Devendra Prasad, Krishnan Rajkumar, A. Satyanarayana Reddy\\
 dp742@snu.edu.in, 	krishnan@mail.jnu.ac.in,\\ satyanarayana.reddy@snu.edu.in.
  }
\begin{document}
\maketitle
 

\begin{abstract}
We define new generalized factorials in several variables over an arbitrary subset $\underline{S} \subseteq R^n,$ where $R$ is a Dedekind domain and $n$ is a positive integer. We then study the properties
of the fixed divisor $d(\underline{S},f)$ of a multivariate polynomial $f \in R[x_1,x_2, \ldots, x_n]$.
We generalize the results of Polya, Bhargava, Gunji \& McQuillan and strengthen that of Evrard, all of which relate the fixed divisor to generalized factorials of $\underline{S}$. We also express $d(\underline{S},f)$ in terms of the images $f(\underline{a})$ of finitely many elements $\underline{a} \in R^n$, generalizing a result of Hensel, and in terms of the coefficients of $f$ under explicit bases.
\end{abstract}
\textbf{keywords} {Fixed divisor, Generalized factorials, Dedekind domain}
\maketitle

\section{Introduction}\label{s1}

 Let $R$ be a Dedekind domain, $n$ a positive integer and 
$\underline{S}\subseteq R^n$ be an arbitrary subset. Let $f \in R[x_1,x_2, 
\ldots, x_n]=R[\underline{x}]$ be a  
  polynomial in $n$ variables. The fixed divisor of $f$ over $\underline{S}$, 
denoted 
$d(\underline{S},f)$, is defined as the ideal in $R$ generated 
by the values of   $f$ on $\underline{S}.$ 

\medskip

The study of $d(\underline{S},f)$ appears to have been initiated by Hensel 
\cite{Hensel} (see  \cite{Dickson} also) in 1896  where he proved the 
following
\begin{theorem}\label{Hensel}
Let $f \in \mathbb{Z}[\underline{x}]$ be a polynomial with degree $m_i$ in $x_i$ 
for $i=1,2,\ldots , n$. Then $d(\Z^n,f)$ equals the g.c.d. of the values 
$f(r_1,r_2, \ldots , r_n)$ where each $r_i$ ranges over $m_i+1$ consecutive 
integers.
\end{theorem}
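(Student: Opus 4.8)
The plan is to reduce the statement to a one-variable fact about finite differences and then iterate it over the $n$ coordinates. Write $\underline{B} = \{a_1, a_1+1, \dots, a_1+m_1\} \times \cdots \times \{a_n, a_n+1, \dots, a_n+m_n\} \subseteq \Z^n$ for the box of evaluation points, where $a_1, \dots, a_n \in \Z$ are arbitrary, so that $d(\underline{B},f)$ is the ideal of $\Z$ generated by the $\prod_i(m_i+1)$ values of $f$ on $\underline{B}$. Since $\underline{B} \subseteq \Z^n$ we have $d(\underline{B},f) \subseteq d(\Z^n,f)$ automatically; the real content is the reverse inclusion $d(\Z^n,f) \subseteq d(\underline{B},f)$, i.e.\ that $f(\underline{r}) \in d(\underline{B},f)$ for every $\underline{r} \in \Z^n$. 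Once both inclusions are in hand, $d(\Z^n,f) = d(\underline{B},f)$; translating this equality of ideals into generators of principal ideals of $\Z$ gives the asserted equality of g.c.d.'s, and in particular shows it does not depend on the choices $a_1, \dots, a_n$.

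The one-variable input is Newton's forward-difference expansion: if $g \in \Z[x]$ has $\deg g \le m$, then for every $a \in \Z$
\[
 g(x) = \sum_{k=0}^{m} (\Delta^k g)(a)\binom{x-a}{k}, \qquad (\Delta^k g)(a) = \sum_{j=0}^{k} (-1)^{k-j}\binom{k}{j} g(a+j),
\]
an exact polynomial identity because both sides are polynomials of degree $\le m$ agreeing at $x = a, a+1, \dots, a+m$. Specializing $x$ to any integer $r$, the coefficients $\binom{r-a}{k}$ lie in $\Z$ and each $(\Delta^k g)(a)$ is a fixed $\Z$-linear combination of $g(a), g(a+1), \dots, g(a+m)$; hence $g(r)$ is a $\Z$-linear combination of $g(a), \dots, g(a+m)$, and in particular lies in the ideal they generate.

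Now I iterate coordinate by coordinate. Fix $\underline{r} = (r_1, \dots, r_n) \in \Z^n$. Applying the above to $g(x_1) = f(x_1, r_2, \dots, r_n) \in \Z[x_1]$, which has degree $\le m_1$ in $x_1$, writes $f(\underline{r})$ as a $\Z$-linear combination of $f(a_1+i_1, r_2, \dots, r_n)$ over $0 \le i_1 \le m_1$. For each such $i_1$, apply it again to $x \mapsto f(a_1+i_1, x, r_3, \dots, r_n) \in \Z[x]$, of degree $\le m_2$, expressing that value as a $\Z$-linear combination of $f(a_1+i_1, a_2+i_2, r_3, \dots, r_n)$ over $0 \le i_2 \le m_2$. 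After $n$ steps, $f(\underline{r})$ is exhibited as a $\Z$-linear combination of the values $f(a_1+i_1, \dots, a_n+i_n)$ with $0 \le i_j \le m_j$ for all $j$ --- that is, of the values of $f$ on $\underline{B}$ --- so $f(\underline{r}) \in d(\underline{B},f)$. This yields $d(\Z^n,f) \subseteq d(\underline{B},f)$ and completes the proof.

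There is no genuine obstacle, only bookkeeping: at the $j$-th stage of the iteration one must observe that the polynomial being expanded truly has coefficients in $\Z$ --- which holds because $f \in \Z[\underline{x}]$ and at that point only integers have been substituted for the other variables --- and degree at most $m_j$ in its variable, so that the one-variable fact applies and keeps every coefficient in $\Z$. The same argument goes through with $\Z$ replaced by any commutative ring, reading the binomial coefficients in its prime ring; this is the prototype for the ``box'' description of $d(\underline{S},f)$ over a general Dedekind domain developed later in the paper.
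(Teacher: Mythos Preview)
Your proof is correct. The paper does not give a self-contained proof of Theorem~\ref{Hensel}; it is quoted as Hensel's 1896 result and later recovered as the special case $\underline{S}=\Z^n$ of the general machinery (see Equation~\eqref{Hensel example}, which follows from Theorem~\ref{t6} and, in the Cartesian-product setting, from Theorem~\ref{t2}). Your argument is the classical direct one: Newton's forward-difference expansion writes any value $g(r)$ as a $\Z$-linear combination of $g(a),\ldots,g(a+m)$ whenever $\deg g \le m$, and you iterate this over the $n$ coordinates. The paper's route reaches the same endpoint by expanding $f$ in the basis $B_{\mathbf{i}}(\underline{x}) = \prod_j \prod_{k<i_j}(x_j - a_{k,j})$ built from an $I$-ordering of each factor $S_j$; for $S_j=\Z$ with the ordering $0,1,2,\ldots$ these $B_{\mathbf{i}}$ are precisely the numerators of the products $\binom{x_1}{i_1}\cdots\binom{x_n}{i_n}$, so the underlying linear algebra coincides with your iterated finite differences. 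What your approach buys is elementarity --- no localization at primes, no $\nu$-orderings, no generalized factorials. What the paper's framework buys is uniformity: the same proof template handles arbitrary $\underline{S}\subseteq R^n$ over a Dedekind domain, where ``consecutive integers'' has no meaning and one is forced to replace the box $\underline{B}$ by the first $l_{\mathbf{m},k}$ terms of a $\nu_{\mathbf{m}}$-ordering.
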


In case when $R$ is  a Dedekind domain with finite norm property, Polya \cite{polya} (see \cite{Narkiewicz} also)
explicitly constructed a sequence of ideals $A_k$ for each integer $k$, which 
served as a bound for the fixed divisor of a univariate polynomial. He proved
\begin{theorem}\label{Pol}
  Let $R$ be  a Dedekind domain with finite norm property, $I \subseteq R$ be a 
proper ideal and $k \geq 2$. Then  $I$ is the fixed divisor of some primitive 
polynomial of degree $k$ in $R[x]$ over $R$   iff $I$ divides $A_k$.
\end{theorem}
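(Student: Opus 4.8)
The plan is to make everything valuation-theoretic, working one prime at a time. Since $R$ is Dedekind, $I\mid A_k$ is equivalent to $A_k\subseteq I$, i.e.\ to $v_{\mathfrak p}(I)\le v_{\mathfrak p}(A_k)$ for every nonzero prime $\mathfrak p$, while $v_{\mathfrak p}(d(R,f))=\min_{a\in R}v_{\mathfrak p}(f(a))$ because $d(R,f)$ is generated by the values of $f$. Recall that $v_{\mathfrak p}(A_k)=\nu_{\mathfrak p}(k):=\sum_{i\ge 1}\lfloor k/|R/\mathfrak p|^{i}\rfloor$, and that $\nu_{\mathfrak p}(k)=v_{\mathfrak p}\bigl(\prod_{i=0}^{k-1}(a_k-a_i)\bigr)$ for any $\mathfrak p$-ordering $a_0,a_1,a_2,\dots$ of $R$ (this is the quantity later called the generalized factorial $k!_{R}$). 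Fixing such an ordering and setting $B_j(x)=\prod_{i=0}^{j-1}(x-a_i)$, a monic polynomial of degree $j$, the minimality in the definition of a $\mathfrak p$-ordering gives $v_{\mathfrak p}(B_j(a))\ge\nu_{\mathfrak p}(j)$ for every $a\in R$, with equality at $a=a_j$, and $\nu_{\mathfrak p}$ is nondecreasing in $j$. Finally, since $B_0,\dots,B_k$ are monic of distinct degrees, the change of basis between $\{x^j\}$ and $\{B_j\}$ is unimodular over $R$, hence preserves content.

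For necessity I would first isolate the local lemma: if a polynomial $g$ over $K:=\operatorname{Frac}(R)$ has degree $\le k$, maps $R$ into $R_{\mathfrak p}$, and $g=\sum_{j=0}^k\gamma_jB_j$, then $v_{\mathfrak p}(\gamma_j)\ge-\nu_{\mathfrak p}(j)$ for every $j$. This follows by induction on $j$: evaluating at $a_j$ gives $\gamma_jB_j(a_j)=g(a_j)-\sum_{i<j}\gamma_iB_i(a_j)$, and each $\gamma_iB_i(a_j)$ lies in $R_{\mathfrak p}$ by the inductive hypothesis together with $v_{\mathfrak p}(B_i(a_j))\ge\nu_{\mathfrak p}(i)$, so the right-hand side lies in $R_{\mathfrak p}$, whence $v_{\mathfrak p}(B_j(a_j))=\nu_{\mathfrak p}(j)$ forces $v_{\mathfrak p}(\gamma_j)\ge-\nu_{\mathfrak p}(j)$. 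Now let $f$ be primitive of degree $k$, write $f=\sum_{j=0}^kc_jB_j$ with $c_j\in R_{\mathfrak p}$, set $e=v_{\mathfrak p}(d(R,f))$, and apply the lemma to $g=\pi^{-e}f$ (which maps $R$ into $R_{\mathfrak p}$): this gives $v_{\mathfrak p}(c_j)\ge e-\nu_{\mathfrak p}(j)\ge e-\nu_{\mathfrak p}(k)$ for all $j$. If $e>\nu_{\mathfrak p}(k)$, then every $c_j$ would lie in $\mathfrak pR_{\mathfrak p}$, contradicting primitivity of $f$ via content preservation; hence $e\le\nu_{\mathfrak p}(k)=v_{\mathfrak p}(A_k)$ for every $\mathfrak p$, i.e.\ $A_k\subseteq d(R,f)$, i.e.\ $d(R,f)\mid A_k$.

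For sufficiency, let $I\subsetneq R$ with $A_k\subseteq I$ (here $k\ge 2$ merely excludes the degenerate linear case, in which no proper $I$ can occur). By the finite norm property only finitely many primes $\mathfrak p_1,\dots,\mathfrak p_r$ divide $A_k$ --- exactly those with $|R/\mathfrak p|\le k$ --- and $v_{\mathfrak q}(I)=0$ at every other prime. Using the Chinese Remainder Theorem I would choose a single sequence $a_0,a_1,\dots$ in $R$ that is simultaneously a $\mathfrak p_t$-ordering for all $t$, and put $g=B_k=\prod_{i=0}^{k-1}(x-a_i)$: this is monic, hence primitive, of degree $k$, and since the ordering serves all $\mathfrak p_t$ at once, $d(R,g)=A_k$. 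Writing $e_t=v_{\mathfrak p_t}(I)\le w_t:=v_{\mathfrak p_t}(A_k)$, I would pick $c\in R\setminus\{0\}$ with $v_{\mathfrak p_t}(c)=e_t$ when $e_t<w_t$ and $v_{\mathfrak p_t}(c)\ge w_t+1$ when $e_t=w_t$ (finitely many congruence conditions, so such a $c$ exists), and set $f=g+c$, which is again monic of degree $k$. A case check then yields $d(R,f)=I$: at a prime $\mathfrak q\nmid A_k$ we have $|R/\mathfrak q|>k=\deg f$, so $f\bmod\mathfrak q$ is a nonzero polynomial with a non-root and $\min_a v_{\mathfrak q}(f(a))=0$; at $\mathfrak p_t$ with $e_t<w_t$ we have $v_{\mathfrak p_t}(g(a))\ge w_t>e_t=v_{\mathfrak p_t}(c)$ for all $a$, so $v_{\mathfrak p_t}(f(a))=e_t$ for every $a$; and at $\mathfrak p_t$ with $e_t=w_t$ we have $v_{\mathfrak p_t}(f(a))\ge w_t$ for all $a$ while $v_{\mathfrak p_t}(f(a_k))=w_t$ because $v_{\mathfrak p_t}(g(a_k))=w_t<v_{\mathfrak p_t}(c)$. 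Matching $v_{\mathfrak p}(d(R,f))$ with $v_{\mathfrak p}(I)$ at every prime gives $d(R,f)=I$.

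The step I expect to be the main obstacle is forcing the sufficiency computations to be equalities rather than mere inequalities: a generic constant perturbation lowers the fixed divisor by exactly the right amount only at primes with $e_{\mathfrak p}<w_{\mathfrak p}$, so the equal case $e_{\mathfrak p}=w_{\mathfrak p}$ must be treated separately (leaving $g$ essentially untouched there), and one must also verify that adding $c$ introduces no new prime divisors of the fixed divisor --- which is precisely where $\deg f=k<|R/\mathfrak q|$ for $\mathfrak q\nmid A_k$ enters. On the necessity side the only genuine content is the inductive coefficient lemma together with the unimodularity (hence content-invariance) of the $\{x^j\}\leftrightarrow\{B_j\}$ change of basis; the rest is standard $\mathfrak p$-ordering bookkeeping, for which the finite norm property supplies the finiteness and the simultaneous constructions.
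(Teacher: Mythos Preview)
Your proof is correct. Note, however, that the paper does not itself prove Theorem~\ref{Pol}: it is quoted in the introduction as Polya's 1915 result, and the paper's own contribution is the pair of generalizations---Theorem~\ref{Bound for FD for polynomial of type (m,k)} for the forward direction and Proposition~\ref{t5} for the converse---which specialize to Polya's statement when $n=1$ and $\underline{S}=R$.

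Your argument runs parallel to that route. For necessity, both localize to a DVR; the paper argues abstractly from finite generation of $\mathrm{Int}_{\mathbf{m},k}(\underline{S},V)$ and reads off $s=\nu(\Gamma)$ as $\max_j(-\nu(f_j))$ over a generating set, while you make the same computation concrete via the inductive coefficient lemma in the explicit basis $B_j$---this is exactly the content behind the paper's Proposition~\ref{Int_basis}, so the difference is presentational. Your unimodularity observation matches the remark used in the proof of Theorem~\ref{t2}. For sufficiency, both start from a polynomial $g$ with $d(\underline{S},g)$ equal to the factorial and perturb by a constant; the paper's Proposition~\ref{t5} sets $w_{P_i}(b)=w_{P_i}(I)$ uniformly and secures primitivity by the auxiliary primes $Q_j$, whereas you take $g=B_k$ monic (so primitivity is automatic) and split the choice of $v_{\mathfrak p_t}(c)$ according to whether $e_t<w_t$ or $e_t=w_t$. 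Your case analysis---particularly the $e_t=w_t$ branch and the treatment of primes $\mathfrak q\nmid A_k$ via $|R/\mathfrak q|>k=\deg f$---is in fact more explicit than the paper's ``Clearly, $d(\underline{S},f)=I$'' at the corresponding step.
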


When $\underline{S}=R=\Z$, $A_k=k!$ is the upper bound. Later Cahen \cite{Cahen 
fd} relaxed the condition of finite norm property in the above theorem.

\medskip

Gunji and McQuilllan \cite{Gunji}, \cite{Gunji1} extended Theorem \ref{Pol} in two different aspects. For the one variable case and $R$ any number 
field, they generalized  the result of Polya when $\underline{S}$ is the coset 
of any ideal. In the multivariate case for $R=\Z$, they considered the case when 
$\underline{S}$ is the Cartesian product of arithmetical progressions and proved 
the following

\begin{theorem} \label{Gun} Let $A_i=\lbrace 
sa_i+b_i\rbrace_{s \in \mathbb{Z}}$ be an 
arithmetic progression with $a_i, b_i \in \mathbb{Z}$  for $i=1,2,\ldots n$. Let 
$\underline{A}= A_1 \times A_2 \times \cdots \times A_n$ and $d\in \Z$ be any 
integer.
Then there exists a primitive 
polynomial $f \in \Z[\underline{x}]$ in $n$ variables with degree $m_i$ in each 
variable $x_i$ such that $d(\underline{A},f) = (d)  $ iff $d$ 
divides $\prod_{i=1}^n m_i! a_i^{m_i}$.

\end{theorem}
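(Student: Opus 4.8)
The plan is to translate the assertion into a divisibility statement about the coordinates of $f$ in two explicit bases and then dispatch each implication separately: the forward direction by an elementary $p$-adic estimate, the backward direction by producing one explicit polynomial. For each $i$ set $\Pi_{i,k}(x):=\prod_{l=0}^{k-1}(x-b_i-la_i)\in\Z[x]$, a monic polynomial of degree $k$, and $B_{i,k}(x):=\frac{1}{k!\,a_i^{k}}\,\Pi_{i,k}(x)$, which at $x=sa_i+b_i\in A_i$ takes the integer value $\binom{s}{k}$. Any $f\in\Z[\underline{x}]$ with $\deg_{x_i}f\le m_i$ can be written $f=\sum_{\underline 0\le\underline j\le\underline m}c_{\underline j}\prod_{i=1}^{n}B_{i,j_i}(x_i)$; evaluating, $f(s_1a_1+b_1,\dots,s_na_n+b_n)=\sum_{\underline j}c_{\underline j}\prod_i\binom{s_i}{j_i}$, while conversely each $c_{\underline j}$ is the value at the origin of an iterated forward difference of this function, hence a $\Z$-combination of the values $f(\underline{a})$, $\underline{a}\in\underline{A}$. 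Consequently $c_{\underline j}\in\Z$ for all $\underline j$ and $d(\underline{A},f)=(\gcd_{\underline j}c_{\underline j})$. On the other hand, $f=\sum_{\underline j}\dfrac{c_{\underline j}}{\prod_i j_i!\,a_i^{j_i}}\prod_{i=1}^{n}\Pi_{i,j_i}(x_i)$, and since the polynomials $\prod_i\Pi_{i,j_i}(x_i)$ with $\underline j\le\underline m$ form a $\Z$-basis of the module $\{h\in\Z[\underline{x}]:\deg_{x_i}h\le m_i\ \forall i\}$ (a triangular, unimodular change of basis from the monomials), the content of $f$ equals $\gcd_{\underline j}(c_{\underline j}/\prod_i j_i!\,a_i^{j_i})$; in particular $f$ is primitive iff this last gcd equals $1$. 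Put $N:=\prod_{i=1}^{n}m_i!\,a_i^{m_i}$ (we may assume every $a_i\neq 0$, hence $N\neq 0$ and $d\neq 0$).

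For the forward direction, suppose $f$ is primitive with $\deg_{x_i}f\le m_i$ and $d(\underline{A},f)=(d)$, so that $|d|=\gcd_{\underline j}c_{\underline j}$ and, for each prime $p$, $\min_{\underline j}\left(v_p(c_{\underline j})-\sum_{i=1}^{n}(j_i\,v_p(a_i)+v_p(j_i!))\right)=0$, where $v_p$ denotes the $p$-adic valuation; choose $\underline{j}^{0}\le\underline m$ attaining this minimum. Then $v_p(d)=\min_{\underline j}v_p(c_{\underline j})\le v_p(c_{\underline j^{0}})=\sum_{i=1}^{n}(j^{0}_i\,v_p(a_i)+v_p(j^{0}_i!))\le\sum_{i=1}^{n}(m_i\,v_p(a_i)+v_p(m_i!))=v_p(N)$, using $j^{0}_i\le m_i$ together with the monotonicity of $k\mapsto v_p(k!)$. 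Since this holds for every prime $p$, $d\mid N$. (This implication can also be read off from our general divisibility bound relating $d(\underline{S},f)$ to generalized factorials, applied to $\underline{S}=\underline{A}$, once one checks that the relevant generalized factorial of $A_1\times\cdots\times A_n$ equals $N$.)

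For the backward direction, assume $d\mid N$, and first treat the case that some $m_i\ge 1$. I claim $f:=d+\prod_{i=1}^{n}\Pi_{i,m_i}(x_i)=d+\prod_{i=1}^{n}\prod_{l=0}^{m_i-1}(x_i-b_i-la_i)$ works. The monomial $x_1^{m_1}\cdots x_n^{m_n}$ occurs in $f$ with coefficient $1$, so $f$ is primitive and has degree exactly $m_i$ in $x_i$ for each $i$. For $\underline{a}\in\underline{A}$ whose $i$-th coordinate is $s_ia_i+b_i$ one computes $f(\underline{a})=d+\prod_{i=1}^{n}a_i^{m_i}m_i!\binom{s_i}{m_i}=d+N\prod_{i=1}^{n}\binom{s_i}{m_i}$; taking $s_1=0$ gives the value $d$, so $d\in d(\underline{A},f)$, while $d\mid N$ forces $d\mid f(\underline{a})$ for every $\underline{a}$, whence $d(\underline{A},f)\subseteq(d)$. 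Therefore $d(\underline{A},f)=(d)$. In the remaining case $m_1=\cdots=m_n=0$ the hypothesis $d\mid N$ reads $d\mid 1$, and the constant $f:=d$ is primitive with $d(\underline{A},f)=(d)$.

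I expect the real work to be the first paragraph: recognising that the progression-adapted falling-factorial basis $\{B_{i,k}\}$ is what computes $d(\underline{A},f)$, that the monic basis $\{\Pi_{i,k}\}$ is what computes the content of $f$, and that the two are linked by the diagonal factor $\prod_i j_i!\,a_i^{j_i}$. Once that dictionary is set up, the forward implication is only the short valuation comparison above (whose sole inputs are $j^{0}_i\le m_i$ and the monotonicity of factorial valuations), and the backward implication only requires one to guess the witness $d+\prod_i\Pi_{i,m_i}(x_i)$ and check it.
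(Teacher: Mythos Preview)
Your proof is correct. Note, however, that the paper does not supply its own proof of this statement: Theorem~\ref{Gun} is quoted from Gunji and McQuillan as background. The paper's own contribution is the general Corollary~\ref{Bhargava's theorem} (derived from Theorem~\ref{t2}), which, specialized to $S_i=A_i$ together with the elementary computation $k!_{A_i}=k!\,a_i^{k}$, recovers Theorem~\ref{Gun}.

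Your argument is exactly this specialization, written out directly and self-containedly. Your polynomials $\Pi_{i,k}$ and $B_{i,k}$ are the paper's $B_{\mathbf{i}}$ and $\binom{\underline{x}}{\mathbf{i}}_{\underline{S}}$ from Section~\ref{s3} for this particular $\underline{S}$ (the sequence $b_i,\,b_i+a_i,\,b_i+2a_i,\dots$ being a simultaneous $P$-ordering of $A_i$ for every prime $P$). Your identification of $d(\underline{A},f)$ with $(\gcd_{\underline{j}}c_{\underline{j}})$ and of the content of $f$ with $\gcd_{\underline{j}}\bigl(c_{\underline{j}}/\prod_ij_i!\,a_i^{j_i}\bigr)$ is precisely equation~\eqref{dsf of Bhargava in multivariate case} combined with the unimodularity remark in the proof of Theorem~\ref{t2}; your $p$-adic comparison is then the divisibility $d(\underline{S},f)\mid\mathbf{m}!_{\underline{S}}$ spelled out prime by prime. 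The one place you are slightly more explicit than the paper is the converse: you exhibit the concrete witness $f=d+\prod_i\Pi_{i,m_i}$, whereas the paper's proof of Theorem~\ref{t2} only says ``selection of $b(\mathbf{i})$'s suitably'' gives the required polynomial. Otherwise the two routes coincide.
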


\medskip

In the case of one variable, the complete generalization of Theorem \ref{Pol} 
for a general subset $S \subseteq R$ was given by Bhargava \cite{Bhar2} (also see 
\cite{Bhar3} and \cite{Bhar1}) by introducing the notion of generalized 
factorial $k!_S$ to replace $A_k$ of Theorem \ref{Pol}. For an 
excellent exposition of the history and various definitions of $k!_S$  see Cahen 
and Chabert \cite{Cahen factorial} (also see \cite{Wood}, \cite{Bhar1}, 
\cite{Bhar3}). Bhargava also obtained a formula for $d(S,f)$ in terms of the 
coefficients of $f$.

\medskip

The case of a multivariate polynomial for general $\underline{S}$ was addressed 
by Evrard \cite{Ev} by generalizing Bhargava's factorial in several variables. 
In order to define this factorial, we need the notion of the ring of integer 
valued polynomials on $\underline{S}$.

Let $K$ be the field of fractions of $R$. Then for $\underline{S} \subseteq 
R^n$, the ring of integer valued polynomials on $\underline{S}$ is defined as 
$$\mathrm{Int}(\underline{S},R)= \lbrace f \in K[\underline{x}] 
:f(\underline{S}) 
\subseteq R \rbrace. $$ 

These rings have been extensively studied in the last few decades. Cahen and 
Chabert \cite{Cahen} is a good reference for this. We also need the following 
notation
$$\mathrm{Int}_k(\underline{S},R)=\{f\in \mathrm{Int}(\underline{S},R) : 
\mathrm{\ total\ degree\ of\ } f\le k\}.$$ 
 
The generalized multivariable factorial is defined as follows.
\begin{definition} For each $k \in \mathbb{N}$ and $\underline{S} \subseteq 
R^n$, the generalized factorial of index 
$k$ 
is defined 
 by $$k!_{\underline{S}}=\lbrace a \in R : a \:  
\mathrm{Int}_k(\underline{S},R)\subseteq 
R[\underline{x}]\rbrace.$$
\end{definition}

There are several properties of $k!_{\underline{S}}$ that make it a good 
generalization of Bhargava's factorial to several variables. These properties will 
be discussed in Section \ref{s2}. Using this factorial Evrard \cite{Ev} proved 
the following generalization of Theorem \ref{Pol}.

\begin{theorem}\label{Evrard's bound}
Let $f$ be a primitive polynomial of total degree $k$ in $n$ variables  and 
$\underline{S} \subseteq R^n$, then $d(\underline{S},f)$ divides 
$k!_{\underline{S}}$ and this is sharp.
\end{theorem}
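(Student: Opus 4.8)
\noindent\emph{Proof proposal.} The assertion has two halves: the divisibility $d(\underline{S},f)\mid k!_{\underline{S}}$, and its sharpness — the existence, for every $\underline{S}$ and $k$, of a primitive $f$ of total degree $k$ with $d(\underline{S},f)=k!_{\underline{S}}$. My plan is to prove divisibility by reducing to a DVR, and sharpness by building local witnesses at the finitely many relevant primes and patching them.

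\textit{Divisibility.} If $f$ vanishes on all of $\underline{S}$ then $d(\underline{S},f)=0$, which forces $k!_{\underline{S}}=0$ as well (a standard property of the factorial recalled in Section~\ref{s2}), and there is nothing to prove. Otherwise, since we are comparing two ideals of $R$, it suffices to check $(k!_{\underline{S}})R_{\mathfrak p}\subseteq d(\underline{S},f)R_{\mathfrak p}$ at every maximal ideal $\mathfrak p$, and everything localises well: $d(\underline{S},f)R_{\mathfrak p}$ is generated by the values $f(\underline{a})$, the factorial commutes with localisation, and $f$ remains primitive over the local ring $R_{\mathfrak p}$, hence has a coefficient which is a unit there. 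Working in the DVR $R_{\mathfrak p}$ with valuation $v$, pick $\underline{a}_0\in\underline{S}$ minimising $v(f(\underline{a}_0))$ and set $b=f(\underline{a}_0)$, so that $d(\underline{S},f)R_{\mathfrak p}=(b)$ and $g:=b^{-1}f$ lies in $\mathrm{Int}_k(\underline{S},R_{\mathfrak p})$. For any $a\in(k!_{\underline{S}})R_{\mathfrak p}$ we then have $ag\in R_{\mathfrak p}[\underline{x}]$; reading off the coefficient of $g$ at a monomial where $f$ has a unit coefficient gives $ab^{-1}\in R_{\mathfrak p}$, i.e.\ $a\in(b)=d(\underline{S},f)R_{\mathfrak p}$, as required.

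\textit{Sharpness — the construction.} By the divisibility just proved, for any primitive $f$ of total degree $k$ one has $v_{\mathfrak q}(d(\underline{S},f))=0$ at every $\mathfrak q\nmid k!_{\underline{S}}$, so the problem concerns only the finitely many primes $\mathfrak p_1,\dots,\mathfrak p_r$ with $e_i:=v_{\mathfrak p_i}(k!_{\underline{S}})\ge 1$, and it reduces to producing a primitive $f$ of total degree $k$ with $f(\underline{a})\in k!_{\underline{S}}$ for all $\underline{a}\in\underline{S}$. (If $k!_{\underline{S}}=0$ there are no such primes and some nonzero $h\in\K[\underline{x}]$ of total degree $\le k$ vanishes on $\underline{S}$; then $\mathrm{cont}(h)^{-1}h\cdot x_1^{\,k-\tdeg h}$ does the job.) At a bad prime $\mathfrak p_i$, pass to $R_{\mathfrak p_i}$ and take a basis $\{g_j\}$ of $\mathrm{Int}(\underline{S},R_{\mathfrak p_i})$ ordered by total degree — the explicit bases produced in Section~\ref{s2} serve here; writing $g_j=\pi_i^{-d_j}h_j$ with $h_j$ primitive over $R_{\mathfrak p_i}$, one has $e_i=\max\{d_j:\tdeg g_j\le k\}$, attained at some $j^\ast$, and the numerator $h_{j^\ast}$ is primitive over $R_{\mathfrak p_i}$, of total degree $\le k$, with $v_{\mathfrak p_i}(h_{j^\ast}(\underline{a}))\ge e_i$ on $\underline{S}$; multiplying by a power of $x_1$ we may assume $\tdeg h_{j^\ast}=k$. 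A Chinese Remainder choice of $f\in R[\underline{x}]$ congruent to this local witness modulo $\mathfrak p_i^{\,e_i+1}$ for each $i$ then satisfies $f(\underline{a})\in k!_{\underline{S}}$ on $\underline{S}$ and is primitive at each $\mathfrak p_i$.

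\textit{The main obstacle.} What remains — and what I expect to be the delicate step — is to make this $f$ primitive at \emph{all} primes and of total degree exactly $k$: the construction says nothing away from the $\mathfrak p_i$, and over a non-principal $R$ one cannot simply divide out the content. The clean way is to work inside the $R$-module $\mathcal{F}=\{\,g\in R[\underline{x}]_{\le k}:g(\underline{S})\subseteq k!_{\underline{S}}\,\}$: it has full rank $\binom{n+k}{n}\ge 2$, and its localisation at each prime is not contained in the maximal ideal times $R_{\mathfrak p}[\underline{x}]_{\le k}$ (at good primes it is everything, at $\mathfrak p_i$ it contains the primitive $h_{j^\ast}$), so the standard basic-element theorem over a Dedekind domain yields an element $f_0\in\mathcal F$ of unit content, after a finite Chinese Remainder correction at the finitely many primes dividing its a priori content — legitimate because $k!_{\underline{S}}$, hence $\mathcal F$, surjects onto the residue field at each such prime. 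Replacing $f_0$ by $f_0+\mu x_1^k$ with $\mu\in k!_{\underline{S}}\setminus\{0\}$ if its $x_1^k$-coefficient vanishes makes the total degree exactly $k$ without losing membership in $\mathcal F$ or unit content (the unit coefficient of $f_0$ then sits away from $x_1^k$). For the resulting primitive $f$ of total degree $k$ we get $d(\underline{S},f)\subseteq k!_{\underline{S}}$ from $f(\underline{S})\subseteq k!_{\underline{S}}$, together with the reverse inclusion from the divisibility, hence $d(\underline{S},f)=k!_{\underline{S}}$. Apart from this global-primitivity manoeuvre, the argument is routine localisation combined with the properties of $k!_{\underline{S}}$ established in Section~\ref{s2}.
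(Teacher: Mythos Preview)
Your divisibility argument is correct and coincides with the paper's own approach. Note that the paper does not prove Theorem~\ref{Evrard's bound} separately (it is quoted from Evrard), but proves the refinement Theorem~\ref{Bound for FD for polynomial of type (m,k)} by exactly the same mechanism: localise at a prime, observe that $f/\pi^{t}\in\mathrm{Int}_{k}(\underline{S},V)$ when $t=\nu(d(\underline{S},f))$, and conclude $t\le s=\nu(k!_{\underline{S}})$ from the definition of the factorial (the paper phrases this through a finite generating set of $\mathrm{Int}_{k}(\underline{S},V)$, you through the defining property of $k!_{\underline{S}}$ directly; these are the same step).

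Where you diverge is in sharpness. The paper's proof of Theorem~\ref{Bound for FD for polynomial of type (m,k)} is content to work over the DVR $V$ and exhibit the local witness $\pi^{s}f_i$; it does not carry out the passage back to a global primitive polynomial over $R$. You, by contrast, take the global statement seriously and set up the CRT patching followed by a primitivity repair inside the module $\mathcal{F}=\{g\in R[\underline{x}]_{\le k}:g(\underline{S})\subseteq k!_{\underline{S}}\}$. This is a genuinely more complete treatment of the sharpness assertion than the paper gives at that point (the paper only returns to a global converse later, in Proposition~\ref{t5}, and under the standing hypothesis $I(\underline{S})=\{0\}$).

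Your global argument is essentially right, but the appeal to a ``standard basic-element theorem'' is heavier than necessary and a little opaque. A cleaner route, in the spirit of the paper's Proposition~\ref{t5}: once your CRT polynomial $f_0$ is primitive at each $\mathfrak p_i$, its content $\mathfrak c$ is coprime to $k!_{\underline{S}}$; let $\mathfrak d$ be the ideal generated by the non-constant coefficients of $f_0$ and choose, by CRT, $\alpha\in k!_{\underline{S}}$ with $\alpha\not\equiv -c_0\pmod{\mathfrak q}$ for each of the finitely many primes $\mathfrak q\mid\mathfrak d$ not among the $\mathfrak p_i$ (possible since $k!_{\underline{S}}$ surjects onto each $R/\mathfrak q$). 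Then $f_0+\alpha\in\mathcal{F}$ has unit content: at the $\mathfrak p_i$ nothing changed modulo $\mathfrak p_i$, and at every other prime some coefficient (either a non-constant one or the new constant term) is a unit. Your final adjustment by $\mu x_1^{k}$ with $\mu\in k!_{\underline{S}}\setminus\{0\}$ to force total degree $k$ is fine.
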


The sharpness of the statement denotes (and will denote in next sections) the 
existence of a  polynomial $f$ 
satisfying the conditions of the theorem
such that  $d(\underline{S},f)=k!_{\underline{S}}.$ This sharpness was obtained 
by using the notion of $\nu$-ordering in $\underline{S}$ (originally due to 
Bhargava \cite{Bhar3}). This notion also proved to be very useful in the computation 
of $k!_{\underline{S}}$ (Proposition 26, \cite{Ev}) and in testing for membership 
of a polynomial in $\mathrm{Int}_k(\underline{S},R)$(Corollary 17, \cite{Ev}).

\medskip

 Observe that Theorems \ref{Hensel} and \ref{Gun} consider the notion 
of partial degrees for a multivariate polynomial as compared to Theorem \ref{Evrard's bound} which considers the notion of total degree. In this paper we take into account \textit{both} these notions of degree and obtain a new generalization of Bhargava's factorial 
in several variables. This factorial denoted 
$\Gamma_{\mathbf{m},k}(\underline{S})$ is indexed by two parameters $\mathbf{m} 
\in \W^n$, $k \in \W$ and reduces to $k!_{\underline{S}}$ in special cases. Here 
(and throughout this paper) $\W$ denotes the set of non-negative integers.

We use this factorial to get a generalization of Polya's result (Theorem 
\ref{Pol}) which is sharper than Theorem \ref{Evrard's bound}. For $\underline{S}$ not contained in an algebraic subset of $K^n$, we define an 
analogue of $\nu$-ordering which is helpful in the computation of 
$\Gamma_{\mathbf{m},k}(\underline{S})$ and also improves the criteria for 
membership in $\mathrm{Int}(\underline{S},R)$ in some cases. 

We also obtain a generalization of Hensel's result (Theorem \ref{Hensel}) for such $\underline{S}$, in 
which we show that $d(\underline{S},f)$ is the g.c.d. of finitely many values 
of $f$ (at explicitly constructed elements). To our knowledge, excepting the 
case of a univariate polynomial over a discrete valuation ring (DVR) in 
\cite{Bhar2}, this is the first time that this question has been addressed in 
the general case. 
Finally we show that at most two values of $f$ are sufficient to determine 
$d(\underline{S},f)$!

\medskip

The organization of this paper is as follows. In section \ref{s2} we will define 
$\Gamma_{\mathbf{m},k}(\underline{S})$, establish many of its properties and 
also discuss the advantages of these factorials  in this section and the next. In section \ref{s3} we consider 
the case when $\underline{S}$ is a Cartesian product and obtain a formula for 
$d(\underline{S},f)$ in terms of coefficients of $f$. We shall also compute and 
compare the various factorials in this case.  In section 
\ref{s4} we determine $d(\underline{S},f)$ by finitely many values of $f$.

\section{New generalized factorials in several variables}\label{s2}

 We start this section by recalling some of the properties of the factorial 
function which play an important role in various applications. See Chabert 
\cite{Chabert} for more information regarding these applications as well as 
generalizations of these properties in other contexts.

\medskip

\noindent
$\textbf{Property A.}$ For all $k, l \in \mathbb{N}$,
$k!l!$ divides $(k+l)!$.

\noindent
$\textbf{Property B.}$ For every sequence $x_0, x_1, \ldots, x_n$ of $n+1$ 
integers, the product
$\prod_{0 \le i<j \le n}{(x_j -x_i) }$ is divisible by $1!2!\ldots n!$. 

\noindent
$\textbf{Property C.}$ For every primitive polynomial $f \in \Z[x]$ of degree 
$n$,
$d(\Z,f )$ divides $n! $.

\noindent
$\textbf{Property D.}$ For every integer-valued polynomial $g \in \Q[x]$ of degree $ n$,
$n!g \in \Z[x] . $

\medskip

Note that property C is Polya's result (Theorem \ref{Pol}) in the case $R=\Z$. 
As mentioned in Section \ref{s1}, $k!_{\underline{S}}$ satisfies generalizations 
of each of these properties. We introduce a new generalized factorial which also 
satisfies all of the above properties in the general setting and coincides with 
$k!_{\underline{S}}$ in special cases.

The new generalized factorial can be defined by starting from any of Properties 
B, C or D. For instance, the starting point for Bhargava \cite{Bhar3} is a 
generalization of Property B while Evrard \cite{Ev} starts with that of Property 
D. Each of these definitions has its advantages and all turn out to be 
equivalent. In this section we will follow the latter approach as the exposition 
becomes very concise. We note that almost all of the results and proofs in 
\cite{Ev} carry over to this setting (with appropriate restrictions on the 
degree of the polynomials involved). However, for the sake of completeness we 
include the alternative proofs.

\medskip
 
Let us fix the notation for the rest of the paper. Let $\mathbf{i}\in 
\mathbb{W}^n$ denote the $n$-tuple $(i_1,i_2,\ldots,i_n)$, with 
$\mathbf{0}=(0,0,\ldots,0)$. 
Let $\mathbf{i} \leq \mathbf{j}$ denote the condition that $i_k \leq j_k$ for 
each component $k=1,2,\ldots,n$. The degree of the multivariate polynomial $f$, denoted by $\deg(f)$, is defined as the $n$-tuple $\mathbf{m}=(m_1,m_2,\ldots,m_n)$ where $m_i$ is the partial
degree of $f$ in $x_i$. Note that this definition is different from the total 
 degree of the polynomial, denoted by $\tdeg(f)$ for the remainder of this 
paper. We  call $f$ of type $(\mathbf{m},k)$ if $\deg(f)=\mathbf{m}$ and $\tdeg(f)=k$.

\medskip

For any $\mathbf{m} \in \mathbb{W}^n$ and   $k \in \mathbb{W}$, define  
$$\mathrm{Int}_{\mathbf{m},k}(\underline{S},R)=\{f\in 
\mathrm{Int}(\underline{S},R) : 
\deg(f) \leq \mathbf{m},\ \tdeg(f) \le k \}.$$

  Similarly, we can also define $\mathrm{Int}_{\mathbf{m}}(\underline{S},R)$ as 
above without any condition on total degree. Thus we always have 
$\mathrm{Int}_{\mathbf{m},k}(\underline{S},R) = \mathrm{Int}_k(\underline{S},R) 
\cap\mathrm{Int}_{\mathbf{m}}(\underline{S},R)$.

\begin{definition} For $\mathbf{m} \in \mathbb{W}^n, k \in \mathbb{W},$  and 
$\underline{S} \subseteq 
R^n$, the generalized factorial of index 
$k$ with respect to $\mathbf{m}$ is defined 
 by $$\Gamma_{\mathbf{m}, k}(\underline{S}) =\lbrace a \in R : a \:  
\mathrm{Int}_{\mathbf{m},k}(\underline{S},R)  \subseteq 
R[\underline{x}]\rbrace.$$
\end{definition}

It follows that $\Gamma_{\mathbf{m}, k}(\underline{S})$ always divides 
$k!_{\underline{S}}$. For fixed $k$ and varying ${\mathbf{m}}$,  
the factorials $\Gamma_{\mathbf{m}, k}(\underline{S})$ will be identical 
to $k!_{\underline{S}}$ when each $m_i \geq c$, for some constant $c=c(k,\underline{S})$ (for example, $c=k$ will do). A similar phenomenon occurs if we fix ${\mathbf{m}}$ 
and vary $k$.

We now arrive at the central result of this section which is a strengthening of 
Theorem \ref{Evrard's bound} and is the generalization of Polya's result in this 
setting.

  \begin{theorem}[\textbf{Generalized Property C}] \label{Bound for FD for 
polynomial of type (m,k)}
 Let $f$ be a primitive polynomial of type 
$(\mathbf{m},k)$, then $d(\underline{S},f)$ divides 
$\Gamma_{\mathbf{m}, k}(\underline{S})$ and this is sharp.
 \end{theorem}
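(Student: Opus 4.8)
For the divisibility part I would argue directly with fractional ideals, avoiding localization. Put $\mathfrak{p}:=d(\underline{S},f)$ — actually, to match the notation, write $\mathfrak{d}:=d(\underline{S},f)$; we may assume $\mathfrak{d}\neq(0)$, for otherwise $f$ vanishes on $\underline{S}$, so $\lambda f\in\mathrm{Int}_{\mathbf{m},k}(\underline{S},R)$ for every $\lambda\in K$, and primitivity of $f$ then forces $\Gamma_{\mathbf{m},k}(\underline{S})=(0)$ as well, making the statement trivial. For every $c$ in the fractional ideal $\mathfrak{d}^{-1}$ and every $\underline{a}\in\underline{S}$ we have $c\,f(\underline{a})\in c\,\mathfrak{d}\subseteq R$, so $cf\in\mathrm{Int}(\underline{S},R)$; since $\deg(cf)=\deg(f)\le\mathbf{m}$ and $\tdeg(cf)=\tdeg(f)\le k$, in fact $cf\in\mathrm{Int}_{\mathbf{m},k}(\underline{S},R)$. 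Applying the definition of $\Gamma_{\mathbf{m},k}(\underline{S})$ to each such $cf$ and letting $c$ range over $\mathfrak{d}^{-1}$ gives $\Gamma_{\mathbf{m},k}(\underline{S})\,\mathfrak{d}^{-1}f\subseteq R[\underline{x}]$. Because the coefficients of $f$ generate the unit ideal, comparing coefficients yields $\Gamma_{\mathbf{m},k}(\underline{S})\,\mathfrak{d}^{-1}\subseteq R$, i.e.\ $\Gamma_{\mathbf{m},k}(\underline{S})\subseteq\mathfrak{d}$, which is exactly $\mathfrak{d}\mid\Gamma_{\mathbf{m},k}(\underline{S})$. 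Note that only $\deg(f)\le\mathbf{m}$, $\tdeg(f)\le k$ and primitivity of $f$ were used, so this bound already holds for $f$ of type $\le(\mathbf{m},k)$.

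For sharpness it suffices, by the part just proved, to exhibit a single primitive $f$ of type \emph{exactly} $(\mathbf{m},k)$ with $f(\underline{S})\subseteq\Gamma_{\mathbf{m},k}(\underline{S})$: then $d(\underline{S},f)\subseteq\Gamma_{\mathbf{m},k}(\underline{S})$, the reverse inclusion is automatic, and so $d(\underline{S},f)=\Gamma_{\mathbf{m},k}(\underline{S})$. Assume $\Gamma_{\mathbf{m},k}(\underline{S})\neq(0)$; only finitely many primes $\mathfrak{p}_1,\dots,\mathfrak{p}_r$ divide it, and with $e_j:=v_{\mathfrak{p}_j}(\Gamma_{\mathbf{m},k}(\underline{S}))$ we have $\Gamma_{\mathbf{m},k}(\underline{S})=\bigcap_{j}\mathfrak{p}_j^{\,e_j}$. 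For each $j$ I would work over the DVR $R_{\mathfrak{p}_j}$ and use a $\nu$-ordering of $\underline{S}$ adapted to the type $(\mathbf{m},k)$ — that is, one compatible with a listing of the index set $\{\mathbf{i}\in\W^n:\mathbf{i}\le\mathbf{m},\ |\mathbf{i}|\le k\}$ by a monomial order — to obtain a regular basis of $\mathrm{Int}_{\mathbf{m},k}(\underline{S},R_{\mathfrak{p}_j})$. Its basis element of largest denominator realizes $\mathfrak{p}_j^{\,e_j}$ over $R_{\mathfrak{p}_j}$; after the usual monic normalization and after padding by a fixed polynomial of type exactly $(\mathbf{m},k)$ to restore the full partial degrees, one gets $f_j\in R[\underline{x}]$ of type $(\mathbf{m},k)$, primitive modulo $\mathfrak{p}_j$, with $f_j(\underline{S})\subseteq\mathfrak{p}_j^{\,e_j}$. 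Finally, by the Chinese Remainder Theorem choose $f\in R[\underline{x}]$ of type exactly $(\mathbf{m},k)$ that is congruent to $f_j$ coefficientwise modulo $\mathfrak{p}_j^{\,e_j+1}$ for every $j$ (and, by a standard approximation argument, primitive at all primes); such an $f$ is primitive modulo each $\mathfrak{p}_j$ and satisfies $f(\underline{a})\in\mathfrak{p}_j^{\,e_j}$ for all $\underline{a}\in\underline{S}$ and all $j$, hence $f(\underline{S})\subseteq\bigcap_j\mathfrak{p}_j^{\,e_j}=\Gamma_{\mathbf{m},k}(\underline{S})$, as needed.

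The divisibility is essentially formal; the real work lies in the sharpness. The main obstacle is setting up the correct notion of $\nu$-ordering for the two-parameter type $(\mathbf{m},k)$ and verifying that the last basis element does realize $\mathfrak{p}^{\,v_{\mathfrak{p}}(\Gamma_{\mathbf{m},k}(\underline{S}))}$ over a DVR — here one must keep track of the whole poset $\{\mathbf{i}\le\mathbf{m},|\mathbf{i}|\le k\}$ rather than a single chain, and in particular check that a maximal element of this poset always has total degree $k$, so that the construction can be forced to total degree $k$. The remaining obstacle is the bookkeeping needed to make the patched polynomial simultaneously primitive (at all primes) and of the exact type $(\mathbf{m},k)$; realizability of that type presupposes $\max_i m_i\le k\le\sum_i m_i$, and under this hypothesis the padding and normalization go through. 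As the text indicates, none of this is genuinely new: each step is a routine adaptation of the corresponding step in Evrard's proof of Theorem~\ref{Evrard's bound}, once all degrees occurring are constrained to be $\le\mathbf{m}$ and $\le k$.
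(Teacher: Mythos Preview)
Your divisibility argument is correct and takes a genuinely different route from the paper. You work directly with the fractional ideal $\mathfrak{d}^{-1}$ and the bare definition of $\Gamma_{\mathbf{m},k}(\underline{S})$, extracting the conclusion from primitivity via the content identity $\Gamma_{\mathbf{m},k}(\underline{S})\,\mathfrak{d}^{-1}\subseteq R$; the paper instead localizes at an arbitrary prime, reducing to a DVR $V$, and then argues through a finite generating set $f_1,\ldots,f_r$ of $\mathrm{Int}_{\mathbf{m},k}(\underline{S},V)$ by comparing $\nu(f/\pi^t)$ against the minimal $\nu(f_i)$. Your approach is strictly more elementary---no localization, no appeal to finite generation of $\mathrm{Int}_{\mathbf{m},k}$---while the paper's buys a DVR framework it reuses immediately for the sharpness half and later for the $\nu_{\mathbf{m}}$-orderings of Section~\ref{nu-ordering}.

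For sharpness the two approaches coincide at the local level: select a generator of $\mathrm{Int}_{\mathbf{m},k}(\underline{S},V)$ of minimal valuation and clear its denominator to obtain $\pi^s f_i$. The paper stops there, exhibiting the extremal polynomial only over a DVR; you go further and spell out the CRT patching needed to produce a single global primitive $f\in R[\underline{x}]$ of exact type $(\mathbf{m},k)$ with $d(\underline{S},f)=\Gamma_{\mathbf{m},k}(\underline{S})$. Your outline---clear denominators, pad by a polynomial with coefficients in a high power of $\mathfrak{p}_j$ to restore the exact type, approximate coefficientwise modulo $\mathfrak{p}_j^{e_j+1}$, and arrange primitivity at the remaining primes---is the right shape and is more than what the paper writes out at this point; the paper's later Proposition~\ref{t5} in fact \emph{assumes} this global sharpness rather than supplying it. The hypothesis you make explicit, $\max_i m_i\le k\le\sum_i m_i$, is indeed needed for polynomials of type exactly $(\mathbf{m},k)$ to exist at all and is left tacit in the paper.
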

 
 \begin{proof}From \cite{Cahen} (Prop. 
XI.1.9) on localization with respect 
to any nonzero prime ideal $\mathcal{P}$ of $R$, we have  
\begin{equation*}
(\mathrm{Int}_{\mathbf{m},k}(\underline{S},R))_{\mathcal{P}} = 
\mathrm{Int}_{\mathbf{m},k}(\underline{S},R_{\mathcal{P}}).
\end{equation*}
Thus the localization of 
$\Gamma_{\mathbf{m}, k}(\underline{S})$  at ${\mathcal{P}}$ is same as 
$\Gamma_{\mathbf{m}, k}(\underline{S})$ in $R_{\mathcal{P}}.$ 
 Hence, it suffices to prove the theorem in the case of a DVR $V$  with valuation $\nu$ and uniformizing parameter $\pi $ (i.e. $\nu(\pi)=1)$.

  Let $\Gamma_{\mathbf{m}, k}(\underline{S})= (\pi^s)$ for some $s \in \W$. Recall that for any polynomial $f \in K[\underline{x}]$ with coefficients $a_0, a_1, \ldots, a_n,\ \nu (f)$ is defined as $\operatorname{inf}_{0 \le i \le n} \nu (a_i).$  
Note that $\mathrm{Int}_{\mathbf{m},k}(\underline{S},V)$ is a   $V$-module finitely generated by polynomials $f_1,f_2,\ldots,f_r$ say (Prop. \ref{Int_basis} also gives a $V$-basis). By the definition of $\Gamma_{\mathbf{m}, k}(\underline{S}) $ these polynomials have to satisfy the condition  $$-\nu(f_j)\le \nu(\Gamma_{\mathbf{m}, 
k}(\underline{S})) =s \ \forall \ 0\le  j \le r,$$ and there must exist some $f_i$ such that $-\nu(f_i)= s,$ if not, then $\nu(\Gamma_{\mathbf{m}, k}(\underline{S}))$  will have valuation strictly less than $s$. It is clear that  the polynomial $\pi^s f_i \in V[\underline{x}]$ gives us sharpness.
 
   For a given primitive polynomial $f \in V[\underline{x}]$ of type $(\mathbf{m},k)$  with   
$\nu (d(\underline{S},f))=t, \tfrac{f}{\pi^t} $ belongs to $\mathrm{Int}_{\mathbf{m},k}(\underline{S},V) $ and
hence is a combination of $f_1,f_2,\ldots,f_r$. Consequently, $-\nu(\tfrac{f}{\pi^t} )$ cannot exceed  $s$. Since $\nu(f)=0$ we get $t \le s$ completing the proof.
 \end{proof}

 The following example  suggests that  the factorial defined by us gives a better bound for fixed 
 divisor than that of  \cite{Ev} and \cite{Bhar2} in some cases.

\begin{ex}\label{various bounds for fixed divisor}  If
    $f  $ 
    is a primitive polynomial of type $((2,2),3),$   we 
have the following bounds (refer Equations \eqref{Eqn:relation between bhargava and our factorial}   and \eqref{evrard} for their computations) for $d(\mathbb{Z} \times 2 \mathbb{Z},f):$
 \begin{enumerate}
 \item Theorem \ref{Evrard's bound} gives $3!_{ \mathbb{Z} \times 2 \mathbb{Z}}= 2^33!  $
 \item Corollary \ref{Bhargava's theorem} (or Theorem   \ref{Gun} ) gives  $2!_{ \mathbb{Z}} 2!_{2 
\mathbb{Z}}=2!2^22! $ 
 \item Theorem \ref{Bound for FD for 
polynomial of type (m,k)}  gives  $\Gamma_{(2,2),3}( \mathbb{Z} \times 2 \mathbb{Z})= 2^22!.$
 \end{enumerate}
Hence the polynomial  $\dfrac{f}{2^4}$ cannot be integer 
valued since $2^4$ exceeds $\Gamma_{(2,2),3}( \mathbb{Z} \times 2 \mathbb{Z})$. 
  We refer to the discussion after Corollary \ref{cor:fixed divisor of product of different variable 
polynomials} for a detailed comparison of these bounds.
 \end{ex}

  \medskip

   A multivariate   polynomial of  total  degree $k$ can be of different types and hence there are different bounds for its fixed divisor over any subset. For example, when $k=3$, a polynomial in two variables is one of the following types   
   
    \begin{center}
    \begin{tabular}{ccccc}
   $\{ ((3,0),3),((3,1),3),((3,2),3),((3,3),3),((2,1),3),$\\
  $ ((2,2),3),((2,3),3),((1,2),3),((1,3),3),((0,3),3)\}.$
    \end{tabular}
    \end{center} 

      Further taking $\underline{S}=  \mathbb{Z} \times 2 \mathbb{Z},$  by  Theorem \ref{Bound for FD for polynomial of type (m,k)} we have the following   bounds for fixed divisor  
 
 \begin{center}
\begin{tabular}{ |c|c|c|c|c|c|} 
 \hline
degree & (3,0) & (3,1) & (3,2) & (3,3) & (2,1)   \\ 
 \hline
 bound &  6   &  12     & 24  & 48 & 4     \\
 \hline
  degree &  (2,2)& (2,3) &(1,2) & (1,3) & (0,3)   \\ 
 \hline
 bound &   8   &   48    &  8  & 48 & 48    \\
 \hline
\end{tabular}
\end{center}

The generalized factorials also satisfy the following property.

\begin{pro}[\textbf{Generalized Property A}] For all  $\mathbf{m},\mathbf{m'} 
\in \W^n, k, k' \in \W$,
$\Gamma_{\mathbf{m}, k}(\underline{S}) \cdot \Gamma_{\mathbf{n}, 
k'}(\underline{S})$ divides $\Gamma_{\mathbf{m+n}, k+k'}(\underline{S})$.
\end{pro}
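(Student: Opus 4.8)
The plan is to follow the localization reduction used in the proof of Theorem~\ref{Bound for FD for polynomial of type (m,k)}. First I would invoke \cite{Cahen} (Prop.\ XI.1.9) to see that the formation of $\mathrm{Int}_{\mathbf{m},k}$ commutes with localization at a nonzero prime $\mathcal P$ of $R$, so that the localization of $\Gamma_{\mathbf{m},k}(\underline S)$ at $\mathcal P$ agrees with $\Gamma_{\mathbf{m},k}(\underline S)$ computed in $R_{\mathcal P}$; since an ideal of $R$ is determined by its localizations at the nonzero primes, it suffices to prove the divisibility over a DVR $V$ with valuation $\nu$ and uniformizer $\pi$. I would then unwind what must be shown: recalling that ``$I$ divides $J$'' means $J\subseteq I$, and that $\mathrm{Int}_{\mathbf{m},k}(\underline S,V)\subseteq\mathrm{Int}_{\mathbf{m}+\mathbf{m'},k+k'}(\underline S,V)$ already gives $\Gamma_{\mathbf{m}+\mathbf{m'},k+k'}(\underline S)\subseteq\Gamma_{\mathbf{m},k}(\underline S)$, the claim over $V$ becomes the inclusion $\Gamma_{\mathbf{m}+\mathbf{m'},k+k'}(\underline S)\subseteq\Gamma_{\mathbf{m},k}(\underline S)\cdot\Gamma_{\mathbf{m'},k'}(\underline S)=(\pi^{s+s'})$, where I write $\Gamma_{\mathbf{m},k}(\underline S)=(\pi^{s})$ and $\Gamma_{\mathbf{m'},k'}(\underline S)=(\pi^{s'})$ (both principal with exponent in $\W$, exactly as in the cited proof).

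The heart of the argument is a ``worst-case'' product. As exhibited in the proof of Theorem~\ref{Bound for FD for polynomial of type (m,k)}, there is a polynomial $f\in\mathrm{Int}_{\mathbf{m},k}(\underline S,V)$ with $\nu(f)=-s$, and similarly a $g\in\mathrm{Int}_{\mathbf{m'},k'}(\underline S,V)$ with $\nu(g)=-s'$. I would then verify that the product $h:=fg$ lies in $\mathrm{Int}_{\mathbf{m}+\mathbf{m'},k+k'}(\underline S,V)$: the partial degrees satisfy $\deg(h)=\deg(f)+\deg(g)\le\mathbf{m}+\mathbf{m'}$, the total degrees satisfy $\tdeg(h)=\tdeg(f)+\tdeg(g)\le k+k'$, and $h(\underline S)\subseteq V$ because $f(\underline S),g(\underline S)\subseteq V$. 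Crucially, Gauss's Lemma (multiplicativity of the Gauss valuation on $K[\underline x]$) gives $\nu(h)=\nu(f)+\nu(g)=-(s+s')$. Finally, for any $a\in\Gamma_{\mathbf{m}+\mathbf{m'},k+k'}(\underline S)$ we have $ah\in V[\underline x]$, hence $0\le\nu(ah)=\nu(a)+\nu(h)$, so $\nu(a)\ge s+s'$, i.e.\ $a\in(\pi^{s+s'})$; this is the inclusion we wanted.

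The only step requiring genuine care is the equality $\nu(h)=\nu(f)+\nu(g)$: without the multiplicativity of the Gauss valuation one would only obtain $\nu(h)\ge-(s+s')$, which is too weak, so Gauss's Lemma is doing the real work in forcing the bound to be exactly $s+s'$. The other thing to keep an eye on is the simultaneous bookkeeping of the partial-degree vector and the total degree under multiplication --- this is precisely where the two-parameter factorial is sharper than $k!_{\underline S}$ --- but it is entirely routine. I do not expect any real obstacle.
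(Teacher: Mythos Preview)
Your proof is correct and follows the same approach as the paper: the paper's entire proof consists of recording the inclusion $\mathrm{Int}_{\mathbf{m},k}(\underline{S},R)\cdot \mathrm{Int}_{\mathbf{m'},k'}(\underline{S},R) \subseteq \mathrm{Int}_{\mathbf{m+m'},k+k'}(\underline{S},R)$, and your argument is precisely the unpacking of why that inclusion yields the divisibility. In particular, you make explicit the two ingredients the paper leaves to the reader --- the reduction to a DVR and the use of Gauss's Lemma to get $\nu(fg)=\nu(f)+\nu(g)$ --- and you are right to flag the latter as the step where a naive argument (giving only $\nu(fg)\ge -(s+s')$) would fall short.
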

 
The proof follows by verifying the fact  
\begin{equation*}
\mathrm{Int}_{\mathbf{m},k}(\underline{S},R)\cdot 
\mathrm{Int}_{\mathbf{m'},k'}(\underline{S},R) \subseteq 
\mathrm{Int}_{\mathbf{m+m'},k+k'}(\underline{S},R).
\end{equation*}
\medskip

\section{$\nu_{\mathbf{m}}$-orderings and generalized factorials}\label{nu-ordering}
Now we shall introduce the concept of a $\nu_{\mathbf{m}}$-ordering of $\underline{S}$ which will 
help in establishing generalizations of Properties B and C with certain restrictions on $\underline{S}$. We will also obtain  
the local construction of $\Gamma_{\mathbf{m}, k}(\underline{S})$ using these 
$\nu_{\mathbf{m}}$-orderings. In this section we restrict to the case when $R=V$ where $V$ is a DVR with valuation $\nu$.

\medskip

 Let $K_{\mathbf{m}}[\underline{x}]$ be the vector subspace of $K[\underline{x}]$ containing polynomials of degree at most $\mathbf{m}$. Take the unitary monomial basis of  
$K_{\mathbf{m}}[\underline{x}]$  and place a total order on it  which 
is compatible with the total degree. Denote the cardinality of this basis by 
$l_{\mathbf{m}}$. Thus the monomials are arranged in a sequence $(p_j)_{0 \leq j 
< l_{\mathbf{m}}}$ with  $p_0=1$  and $\tdeg(p_i) \leq \tdeg(p_j)$ if $i<j$.  
For future reference, we also denote by $l_{\mathbf{m},k}$, the cardinality of 
the monomial basis of $\mathrm{Int}_{\mathbf{m},k}(\underline{S},V)$. Note that 
$l_{\mathbf{m},k} \leq \binom{n+k}{k}$. For any sequence of 
elements $\underline{ a}_0, \underline{a}_1,\ldots, \underline{a}_{r}$ in 
$V^n$ with $r<l_{\mathbf{m}}$, define

\begin{equation}\label{definition of new delta}
 \Delta_{\mathbf{m}}(\underline{a}_0, \underline{ a}_1, \underline{a}_2,\ldots, 
\underline{a}_{r})= \det(p_j(\underline{a_i}))_{0 \leq i, j \leq r}.
\end{equation}

\begin{definition} Let $\underline{S} \subseteq V^n$, a sequence of elements $\lbrace \underline{a}_i \rbrace_{0 \leq i < 
l_{\mathbf{m}} } $ of $\underline{S}$ is said to be a 
\textit{$\nu_{\mathbf{m}}$-ordering} of $\underline{S}$ if, for every $1 \leq r 
< l_{\mathbf{m}}$,
\begin{equation*}
\nu(\Delta_{\mathbf{m}}(\underline{a}_0, \underline{ a}_1, \ldots, 
\underline{a}_r))= 
\inf_{\underline{a} \in \underline{S}}\nu (\Delta_{\mathbf{m}}(\underline{a}_0, \ldots, 
\underline{a}_{r-1}, \underline{a})).
\end{equation*}
\end{definition}
The ordering defined above is the analogue of the $\nu$-ordering of \cite{Ev} 
mentioned in the introduction. All the important properties of these orderings 
occur only in the case that $\underline{S}$ is not contained in any algebraic 
subset of $K^n$. In other words, $I(\underline{S})=\lbrace f\in K[\underline{x}] : f(\underline{S})=0   \rbrace $  is trivial. This condition is a natural one to 
impose on $\underline{S}$ as $I(\underline{S})\neq \{0\}$ implies that 
$\Gamma_{\mathbf{m}, k}(\underline{S})=\{0\}$ for large enough values of $m_i$ 
and $k$.  {\em Hence we will assume the condition $I(\underline{S})= \{0\}$ for the 
rest of this paper.}

\medskip

We note that for any $\nu_{\mathbf{m}}$-ordering  $\lbrace \underline{a}_i 
\rbrace $ of $\underline{S}$, we have $\Delta_{\mathbf{m}}(\underline{a}_0, 
\underline{ a}_1, \ldots, \underline{a}_r) \neq 0$ for all $1 \leq r< 
l_{\mathbf{m}}$. This is because the vanishing of any of these would 
automatically give us a non-zero polynomial 
$ \Delta_{\mathbf{m}}(\underline{a}_0, \underline{ a}_1, \ldots, 
\underline{a}_t, \underline{x})$ which would belong to $I(\underline{S})$ 
contradicting our assumption on $\underline{S}$.

 Hence we can define the associated sequence of polynomials as follows
 \begin{definition} With all notations as above we define
$$F_{\mathbf{m},r}(\underline{x})= \dfrac{\Delta_{\mathbf{m}}(\underline{a}_0, 
\underline{ a}_1, \ldots, \underline{a}_{r-1},  
\underline{x})}{\Delta_{\mathbf{m}}(\underline{a}_0, \underline{ a}_1, \ldots, 
\underline{a}_r)} ,$$
for $1 \leq r < l_{\mathbf{m}}$ and $F_{\mathbf{0},0} (\underline{x})=1$.
\end{definition}

 Denote the vector space of all polynomials of degree at most $\mathbf{m}$ and total degree at most $k$ over $K$   by $K^{\lmk}[\underline{x}]$. The 
following result gives a criterion for membership in 
$\mathrm{Int}_{\mathbf{m},k}(\underline{S},V)$.

\begin{pro}\label{Int_basis} Given $\mathbf{m} \in \W^n, k \in \W$ and
$\underline{S} \subseteq V^n$,   let 
$\lbrace \underline{a}_i \rbrace $ be a
$\nu_{\mathbf{m}}$-ordering of $\underline{S}$. 
Then, the associated polynomials $\lbrace F_{\mathbf{m},r} \rbrace_{0 \leq r < 
l_{\mathbf{m},k}} $
 form a $V$-basis for the $V$-module 
$\mathrm{Int}_{\mathbf{m},k}(\underline{S},V)$. Hence, given $f \in 
V[\underline{x}]$ of type $(\mathbf{m}, k)$, we have the following
\begin{equation*}
f \in \mathrm{Int}_{\mathbf{m},k}(\underline{S},V) \Leftrightarrow 
f(\underline{a}_r) \in V  \ \mathrm{for} \  0\leq r < l_{\mathbf{m},k}.
\end{equation*}
\end{pro}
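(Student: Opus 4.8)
The plan is to mimic the classical argument of Bhargava (and Evrard) for $\nu$-orderings, adapted to the bi-indexed setting. First I would verify that the polynomials $F_{\mathbf{m},r}$ for $0 \le r < l_{\mathbf{m},k}$ actually lie in $\mathrm{Int}_{\mathbf{m},k}(\underline{S},V)$. For the degree conditions this is clear: expanding the determinant $\Delta_{\mathbf{m}}(\underline{a}_0,\ldots,\underline{a}_{r-1},\underline{x})$ along its last row shows it is a $V$-linear combination of the monomials $p_0,\ldots,p_r$, all of which have degree at most $\mathbf{m}$ and total degree at most $k$ (using $r < l_{\mathbf{m},k}$ and the fact that the basis is ordered compatibly with total degree). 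For integer-valuedness, I would evaluate $F_{\mathbf{m},r}$ at an arbitrary $\underline{s} \in \underline{S}$: the numerator $\Delta_{\mathbf{m}}(\underline{a}_0,\ldots,\underline{a}_{r-1},\underline{s})$ has valuation at least $\nu(\Delta_{\mathbf{m}}(\underline{a}_0,\ldots,\underline{a}_r))$ by the defining property of a $\nu_{\mathbf{m}}$-ordering, so the quotient lies in $V$.

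Next I would prove that $\{F_{\mathbf{m},r}\}_{0 \le r < l_{\mathbf{m},k}}$ is a $V$-basis of $\mathrm{Int}_{\mathbf{m},k}(\underline{S},V)$. Linear independence over $K$ follows from a degree/leading-term argument: $F_{\mathbf{m},r}$ has a nonzero $p_r$-coefficient (namely $\Delta_{\mathbf{m}}(\underline{a}_0,\ldots,\underline{a}_{r-1})/\Delta_{\mathbf{m}}(\underline{a}_0,\ldots,\underline{a}_r)$), so the family is "triangular" with respect to the ordered monomial basis $(p_j)$. For spanning, given $g \in \mathrm{Int}_{\mathbf{m},k}(\underline{S},V)$, I would peel off the top term: since $g$ has degree at most $\mathbf{m}$ and total degree at most $k$, it is a $K$-combination $g = \sum_{j < l_{\mathbf{m},k}} c_j p_j$; I would instead write $g$ in the $F$-basis over $K$, say $g = \sum_{r} \lambda_r F_{\mathbf{m},r}$, and show each $\lambda_r \in V$ by induction on $r$. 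Evaluating at $\underline{a}_0$ gives $\lambda_0 = g(\underline{a}_0) \in V$; assuming $\lambda_0,\ldots,\lambda_{r-1} \in V$, evaluate $g - \sum_{i<r}\lambda_i F_{\mathbf{m},i}$ at $\underline{a}_r$ and use that $F_{\mathbf{m},i}(\underline{a}_r) \in V$ (from the previous paragraph) together with $F_{\mathbf{m},r}(\underline{a}_r)=1$ and $F_{\mathbf{m},j}(\underline{a}_r) = 0$ for $j>r$ (the last because the numerator determinant has a repeated row), to conclude $\lambda_r = g(\underline{a}_r) - \sum_{i<r}\lambda_i F_{\mathbf{m},i}(\underline{a}_r) \in V$.

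Finally, the membership criterion is almost immediate from the basis statement. If $f \in V[\underline{x}]$ has type $(\mathbf{m},k)$ and $f(\underline{a}_r) \in V$ for all $0 \le r < l_{\mathbf{m},k}$, then write $f = \sum_r \lambda_r F_{\mathbf{m},r}$ over $K$ (possible since $f$ has degree at most $\mathbf{m}$ and total degree at most $k$, so it lies in the $K$-span of the $F$'s, whose cardinality is exactly $l_{\mathbf{m},k} = \dim_K K^{\lmk}[\underline{x}]$) and run the same triangular induction: $\lambda_r = f(\underline{a}_r) - \sum_{i<r}\lambda_i F_{\mathbf{m},i}(\underline{a}_r) \in V$, so $f$ is a $V$-combination of the $F_{\mathbf{m},r}$, hence $f \in \mathrm{Int}_{\mathbf{m},k}(\underline{S},V)$. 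The converse direction is trivial. I expect the main obstacle to be bookkeeping around the total-degree constraint: one must check carefully that truncating the monomial basis at index $l_{\mathbf{m},k}$ (rather than $l_{\mathbf{m}}$) is consistent — i.e. that the $F_{\mathbf{m},r}$ with $r < l_{\mathbf{m},k}$ genuinely span $K^{\lmk}[\underline{x}]$ and that no $F_{\mathbf{m},r}$ with $r < l_{\mathbf{m},k}$ accidentally picks up a monomial of total degree exceeding $k$. This is where the hypothesis that the monomial order is compatible with total degree is essential, and it is the one place the argument genuinely differs from Evrard's single-index version.
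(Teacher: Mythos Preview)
Your proposal is correct and follows essentially the same approach as the paper: both use that the $F_{\mathbf{m},r}$ are integer-valued by the defining minimality of the $\nu_{\mathbf{m}}$-ordering, that they form a $K$-basis of $K^{\lmk}[\underline{x}]$ by triangularity against $(p_j)$, and that the evaluation matrix $(F_{\mathbf{m},j}(\underline{a}_i))_{i,j}$ is unit-triangular over $V$. The paper simply packages your inductive extraction of the $\lambda_r$ as inverting a unimodular triangular matrix, which is the same computation.
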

\begin{proof}
First note that $\lbrace F_{\mathbf{m},r} \rbrace_{0 \leq r < 
l_{\mathbf{m},k}} $ is a subset of 
$\mathrm{Int}_{\mathbf{m},k}(\underline{S},V)$ by the definition of 
$\nu_{\mathbf{m}}$-ordering. These polynomials also form a  basis of $K^{\lmk}[\underline{x}]$.
  Their expansion in terms 
of $\{p_r\}$ gives a lower-triangular matrix with the diagonal consisting of the 
non-zero entries
\begin{equation*}
\frac{\Delta_{\mathbf{m}}(\underline{a}_0, \underline{ a}_1, 
\underline{a}_2,\ldots, 
\underline{a}_{r-1})}{\Delta_{\mathbf{m}}(\underline{a}_0, \underline{ a}_1, 
\underline{a}_2,\ldots, 
\underline{a}_{r})} \cdot
\end{equation*}

Hence any $g \in \mathrm{Int}_{\mathbf{m},k}(\underline{S},V)$ can be expressed 
as $g(\underline{x}) = \sum_r c_r F_{\mathbf{m},r}(\underline{x})$ where $c_r \in K$.

Evaluation of  this expansion at $\underline{x} = \underline{a}_r$ for each $r$ 
gives us the matrix equation
\begin{equation*}
(g(\underline{a}_r))_{0 \leq r < l_{\mathbf{m},k}} = U \cdot (c_r)_{0 \leq r < 
l_{\mathbf{m},k}},
\end{equation*}
where $U$ is an upper-triangular matrix with entries in $V$ and unit   diagonal. 
Hence $U$ is  unimodular and has an inverse with entries in $V$, leading to the 
conclusion that $c_r \in V$ for all $r$. This establishes that $\lbrace F_{\mathbf{m},r} \rbrace_{0 \leq r < l_{\mathbf{m},k}} $ is a $V$-module basis for $\mathrm{Int}_{\mathbf{m},k}(\underline{S},V)$ .

The second statement follows by observing that we only need $g(\underline{a}_r) 
\in V$ in the above proof.
\end{proof}

Now we come to the next advantage of our approach over that of \cite{Ev}. 
Given a polynomial of type $(\mathbf{m}, k)$, it needs to be evaluated at $\lmk$ 
points in order to check for it being  an integer-valued, which in general, may be much 
smaller than the corresponding number $\binom{n+k}{k}$  if 
one considers only total degree (Cor. 17, \cite{Ev}). 
 \begin{ex} Consider the polynomial $$f(x,y)=1 - \dfrac{53 y}{30} + 
\dfrac{xy}{2}+\dfrac{12y^2}{5} -\dfrac{xy^2}{2}- \dfrac{19y^3}{30} \;\;and \;
\underline{S}= \Z_5 \times \Z_5.$$
If we will keep only total degree in mind (Cor. 17, \cite{Ev}) then 
corresponding to the monomial ordering $1,x,y,x^2,xy,y^2,x^3,x^2y,xy^2,y^3$  we must 
check   values of $f$ on first $\binom{3+2}{2}$ terms  
of $\nu$-ordering. The terms of $\nu$-ordering are - $(0,0), (1,0), (0,1), (2, 0), (1,1), (0, 2), (3, 0),   (2,1), (1, 2)$ and $(0, 3)$ with corresponding $f$ values  $1,1,1,1,1,2,1,1,1$  and 
$\dfrac{1}{5}$ respectively. The last value implies that this polynomial does not map
$\underline{S}$  back to  $\Z_5$.
  
   Now $\deg(f)=(1,3)$ and the monomial sequence is $1,x,y,xy,y^2,xy^2,y^3.$  Thus it is sufficient to check first $l_{(1,3),3}=7$  terms of $\nu_{(1,3)}$-ordering. The values of $f$ corresponding to  $\nu_{(1,3)}$-ordering $(0,0), (1,0), (0,1), (1,1),  (0, 2), (1, 2)$ and $(0, 3)$ are 
$1,1,1,1,2,1$ and $\dfrac{1}{5}$ respectively. Again the last value implies that polynomial 
doesn't maps $\underline{S}$  back to  $\Z_5$. 
 \end{ex}

Now we give the local construction of the new factorials. For that we define the 
following minor
\begin{equation*}
 \Delta_{\mathbf{m}}(s ; \underline{a}_0, \underline{ a}_1, 
\underline{a}_2,\ldots, 
\underline{a}_{r-1})= \det(p_j(\underline{a_i}))_{0 \leq i < r, 0 \leq j \leq r, 
j \neq s },
\end{equation*}
for $r<l_{\mathbf{m}}$ and $0 \leq s \leq r$.

Given the basis of $\mathrm{Int}_{\mathbf{m},k}(\underline{S},V)$  as in Proposition \ref{Int_basis}, 
the next corollary follows from the same argument as in Theorem  \ref{Bound for FD for 
polynomial of type (m,k)}.
\begin{cor}\label{local definition of gamma} Given $\mathbf{m}, k, \underline{S}, V, \nu$ and $\lbrace 
\underline{a}_i \rbrace $ as in Prop. \ref{Int_basis}, we have
\begin{equation}\label{local_defn}
\nu(\Gamma_{\mathbf{m}, k}(\underline{S}))= 
\max_{\substack{0\leq r < l_{\mathbf{m},k}\\ 0\leq s \leq r}} 
\nu 
\left(\frac{\Delta_{\mathbf{m}}(\underline{a}_0, \underline{ a}_1, \ldots, 
\underline{a}_r)}{\Delta_{\mathbf{m}}(s;\underline{a}_0, \underline{ a}_1, 
\ldots, \underline{a}_{r-1})}
\right).
\end{equation}
\end{cor}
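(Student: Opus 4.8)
The plan is to feed the explicit $V$-basis $\{F_{\mathbf{m},r}\}_{0\le r<\lmk}$ of $\mathrm{Int}_{\mathbf{m},k}(\underline{S},V)$ produced by Proposition \ref{Int_basis} into the valuation-theoretic description of $\Gamma_{\mathbf{m},k}(\underline{S})$ that was already extracted inside the proof of Theorem \ref{Bound for FD for polynomial of type (m,k)}. Recall from that proof (we are in the DVR case throughout this section) that whenever $g_1,\ldots,g_t$ generate $\mathrm{Int}_{\mathbf{m},k}(\underline{S},V)$ as a $V$-module one has
$$\nu\bigl(\Gamma_{\mathbf{m},k}(\underline{S})\bigr)=\max_{1\le j\le t}\bigl(-\nu(g_j)\bigr),$$
where for $g\in K[\underline{x}]$ we write $\nu(g)$ for the infimum of the valuations of the coefficients of $g$ in the fixed monomial basis $\{p_j\}$; the standing assumption $I(\underline{S})=\{0\}$ makes this ideal nonzero, so the maximum is a finite nonnegative integer. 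Applying this with $\{g_j\}=\{F_{\mathbf{m},r}\}_{0\le r<\lmk}$, the whole task reduces to evaluating $-\nu(F_{\mathbf{m},r})$ for each $r$.

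First I would expand the numerator determinant $\Delta_{\mathbf{m}}(\underline{a}_0,\ldots,\underline{a}_{r-1},\underline{x})$ by cofactors along its last row, whose entries are $p_0(\underline{x}),\ldots,p_r(\underline{x})$. Deleting this row together with the column indexed by $s$ leaves exactly the minor $\Delta_{\mathbf{m}}(s;\underline{a}_0,\ldots,\underline{a}_{r-1})$ of the definition, so
$$\Delta_{\mathbf{m}}(\underline{a}_0,\ldots,\underline{a}_{r-1},\underline{x})=\sum_{s=0}^{r}(-1)^{r+s}\,p_s(\underline{x})\,\Delta_{\mathbf{m}}(s;\underline{a}_0,\ldots,\underline{a}_{r-1}).$$
Dividing through by $\Delta_{\mathbf{m}}(\underline{a}_0,\ldots,\underline{a}_r)$, the coefficient of $p_s$ in $F_{\mathbf{m},r}$ equals $\pm\,\Delta_{\mathbf{m}}(s;\underline{a}_0,\ldots,\underline{a}_{r-1})/\Delta_{\mathbf{m}}(\underline{a}_0,\ldots,\underline{a}_r)$; taking valuations and then the infimum over $0\le s\le r$ yields $\nu(F_{\mathbf{m},r})=\min_{0\le s\le r}\nu\bigl(\Delta_{\mathbf{m}}(s;\underline{a}_0,\ldots,\underline{a}_{r-1})\bigr)-\nu\bigl(\Delta_{\mathbf{m}}(\underline{a}_0,\ldots,\underline{a}_r)\bigr)$, equivalently
$$-\nu(F_{\mathbf{m},r})=\max_{0\le s\le r}\nu\!\left(\frac{\Delta_{\mathbf{m}}(\underline{a}_0,\ldots,\underline{a}_r)}{\Delta_{\mathbf{m}}(s;\underline{a}_0,\ldots,\underline{a}_{r-1})}\right),$$
where a vanishing minor simply contributes $+\infty$ to the infimum and is harmless (the infimum is attained, since $F_{\mathbf{m},r}(\underline{a}_r)=1$ forces $F_{\mathbf{m},r}\neq 0$). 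Substituting this into $\nu(\Gamma_{\mathbf{m},k}(\underline{S}))=\max_{0\le r<\lmk}\bigl(-\nu(F_{\mathbf{m},r})\bigr)$ and merging the two maxima gives exactly \eqref{local_defn}.

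It then remains only to check the degenerate index $r=0$: there $F_{\mathbf{m},0}=1$, so $-\nu(F_{\mathbf{m},0})=0$, while on the right-hand side $\Delta_{\mathbf{m}}(\underline{a}_0)=p_0(\underline{a}_0)=1$ and the empty minor $\Delta_{\mathbf{m}}(0;\,)$ is $1$ by convention, so that term is $0$ as well. The only mildly delicate points are bookkeeping ones: lining up the row and column indices in the cofactor expansion so that the minor appearing there is literally the $\Delta_{\mathbf{m}}(s;\cdot)$ of the definition (the signs $(-1)^{r+s}$ being irrelevant under $\nu$), and invoking that Proposition \ref{Int_basis} yields a $V$-\emph{basis}, so that the generating-set formula for $\nu(\Gamma_{\mathbf{m},k}(\underline{S}))$ applies verbatim. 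I do not expect either to be a genuine obstacle — this corollary is a repackaging of Theorem \ref{Bound for FD for polynomial of type (m,k)} and Proposition \ref{Int_basis} through the Cramer-type expansion of the basis polynomials $F_{\mathbf{m},r}$.
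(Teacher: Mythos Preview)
Your argument is correct and is exactly the approach the paper has in mind: the paper's one-line proof simply says the result follows from the argument of Theorem~\ref{Bound for FD for polynomial of type (m,k)} applied to the basis of Proposition~\ref{Int_basis}, and you have written out precisely that computation, including the cofactor expansion of $F_{\mathbf{m},r}$ along the last row that identifies its $p_s$-coefficient with $\pm\Delta_{\mathbf{m}}(s;\underline{a}_0,\ldots,\underline{a}_{r-1})/\Delta_{\mathbf{m}}(\underline{a}_0,\ldots,\underline{a}_r)$.
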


 Note that this result implies that the right side of Equation 
\eqref{local_defn} is independent of the particular choice of 
$\nu_{\mathbf{m}}$- ordering. Conversely Equation \eqref{local_defn} can be used 
as a definition of the new factorial, provided we establish this independence by 
other means. One way to do that would be to first establish that the 
generalization of Property B holds for the new factorials, which is the last 
result of this section. Here we interpret property B as follows: the product 
$\prod_{i<j} (x_i - x_j)$ is the Vandermonde determinant $\det(f_j(x_i))$ 
where $f_j(x)$ is the monomial $x^j$; the product of factorials $0!1!\ldots n!$ 
is the particular value of this determinant for the choice $x_i = i$ which plays 
the role of the $\nu $ - ordering for a valuation $\nu$ coming from 
any prime ideal of $\Z$.

\begin{pro}[\textbf{Generalized Property B}]\label{prop_B} Given $\mathbf{m}, 
\underline{S}, V, \nu$ and $\lbrace \underline{a}_i \rbrace $ as in Prop. 
\ref{Int_basis}, we have for $r<l_{\mathbf{m}}$
\begin{equation*}
\nu(\Delta_{\mathbf{m}}(\underline{a}_0, \underline{ a}_1, \ldots, 
\underline{a}_r))= 
\min_{\underline{x}_0, \underline{ x}_1, \ldots, \underline{x}_r \in 
\underline{S} }
\nu(\Delta_{\mathbf{m}}(\underline{x}_0, \underline{ x}_1, \ldots, 
\underline{x}_r)).
\end{equation*}
\end{pro}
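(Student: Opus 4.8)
The plan is to prove the two inequalities separately, exploiting the defining property of a $\nu_{\mathbf{m}}$-ordering and the basis result of Proposition \ref{Int_basis}. The inequality $\nu(\Delta_{\mathbf{m}}(\underline{a}_0,\ldots,\underline{a}_r)) \geq \min_{\underline{x}_i \in \underline{S}} \nu(\Delta_{\mathbf{m}}(\underline{x}_0,\ldots,\underline{x}_r))$ is immediate, since the $\underline{a}_i$ are themselves elements of $\underline{S}$, so the left side is one of the quantities over which the minimum on the right is taken. The real content is the reverse inequality: for \emph{every} choice of $\underline{x}_0,\ldots,\underline{x}_r \in \underline{S}$, we must show $\nu(\Delta_{\mathbf{m}}(\underline{x}_0,\ldots,\underline{x}_r)) \geq \nu(\Delta_{\mathbf{m}}(\underline{a}_0,\ldots,\underline{a}_r))$.

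For the reverse inequality I would argue as follows. Fix $\underline{x}_0,\ldots,\underline{x}_r$ and expand the determinant $\Delta_{\mathbf{m}}(\underline{x}_0,\ldots,\underline{x}_r) = \det(p_j(\underline{x}_i))_{0\leq i,j\leq r}$ after a change of basis from $\{p_j\}_{0\leq j\leq r}$ to the associated polynomials $\{F_{\mathbf{m},j}\}_{0\leq j\leq r}$. The transition matrix between these two bases of $K^{l}[\underline{x}]$ (where $l$ is large enough to contain all $p_j$, $0\le j\le r$) is triangular, and by the definition of $F_{\mathbf{m},r}$ its determinant is exactly $\prod_{j=1}^{r}\Delta_{\mathbf{m}}(\underline{a}_0,\ldots,\underline{a}_{j-1})/\Delta_{\mathbf{m}}(\underline{a}_0,\ldots,\underline{a}_j)$, which telescopes to $1/\Delta_{\mathbf{m}}(\underline{a}_0,\ldots,\underline{a}_r)$ since $F_{\mathbf{0},0}=1$ forces $\Delta_{\mathbf{m}}(\underline{a}_0)=1$ (the constant monomial $p_0=1$). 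Therefore
\begin{equation*}
\Delta_{\mathbf{m}}(\underline{x}_0,\ldots,\underline{x}_r) = \Delta_{\mathbf{m}}(\underline{a}_0,\ldots,\underline{a}_r)\cdot \det\big(F_{\mathbf{m},j}(\underline{x}_i)\big)_{0\leq i,j\leq r}.
\end{equation*}
Now each $F_{\mathbf{m},j}$ lies in $\mathrm{Int}_{\mathbf{m},k}(\underline{S},V)$ for $k$ large (by the $\nu_{\mathbf{m}}$-ordering property, or by Proposition \ref{Int_basis}), so $F_{\mathbf{m},j}(\underline{x}_i) \in V$ for all $i,j$; hence $\det(F_{\mathbf{m},j}(\underline{x}_i)) \in V$, i.e. has nonnegative valuation. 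Taking valuations in the displayed identity gives $\nu(\Delta_{\mathbf{m}}(\underline{x}_0,\ldots,\underline{x}_r)) \geq \nu(\Delta_{\mathbf{m}}(\underline{a}_0,\ldots,\underline{a}_r))$, as desired.

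One technical point to check carefully is that the polynomials $F_{\mathbf{m},0},\ldots,F_{\mathbf{m},r}$ really do span the same space as $p_0,\ldots,p_r$ and that the change-of-basis determinant is what I claimed; this is essentially the lower-triangularity observation already used in the proof of Proposition \ref{Int_basis}, so it should transfer without difficulty. The main obstacle — though a mild one — is bookkeeping: one must make sure the index $k$ (the total-degree bound) is chosen large enough that every $F_{\mathbf{m},j}$ for $j\le r$ genuinely belongs to $\mathrm{Int}_{\mathbf{m},k}(\underline{S},V)$, so that the integrality of $F_{\mathbf{m},j}(\underline{x}_i)$ is legitimate; since the statement of the proposition has no $k$ and only requires $r < l_{\mathbf{m}}$, one simply takes $k$ to be the total degree of $\Delta_{\mathbf{m}}(\underline{a}_0,\ldots,\underline{a}_{r-1},\underline{x})$ (or larger), which is fine because $\mathrm{Int}_{\mathbf{m}}(\underline{S},V) = \bigcup_k \mathrm{Int}_{\mathbf{m},k}(\underline{S},V)$ and the $F_{\mathbf{m},j}$ are integer-valued of degree $\leq \mathbf{m}$ by the $\nu_{\mathbf{m}}$-ordering definition. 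With that caveat dispatched, the proof is short.
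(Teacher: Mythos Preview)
Your proof is correct and follows essentially the same route as the paper: both arguments pass from the monomial basis $\{p_j\}$ to the basis $\{F_{\mathbf{m},j}\}$ and use that each $F_{\mathbf{m},j}(\underline{x}_i)\in V$ to conclude $\nu(\det(F_{\mathbf{m},j}(\underline{x}_i)))\ge 0$. The only cosmetic difference is that you compute the change-of-basis determinant explicitly via the telescoping product, whereas the paper leaves it as $\det(M)$ and eliminates it by applying the identity at both $(\underline{x}_i)$ and $(\underline{a}_i)$ together with $\det(F_{\mathbf{m},j}(\underline{a}_i))=1$; these are equivalent bookkeeping choices.
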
 
\begin{proof}

Let $r\le l_{\mathbf{m}}$ be fixed then we know $(p_j)_{0 \leq j \leq r}$ generates the same vector space over $K$ as $(F_{\mathbf{m},i} )_{0 \leq i \leq r} $ generates.  Denote the change of basis matrix by $M$ then we have

\begin{equation}\label{eq; det1}   
  \mathrm{det}(p_j(\underline{ a}_i))= \mathrm{det}(M)\mathrm{det}(F_{\mathbf{m},j}(\underline{ a}_i)). 
\end{equation}

Let $\underline{x}_0, \underline{ x}_1, \ldots, 
\underline{x}_r$ be an arbitrary sequence of elements of $\underline{S}$ then
\begin{equation}\label{eq; det2} 
 \mathrm{det}(p_j(\underline{x}_i))= \mathrm{det}(M)\mathrm{det}(F_{\mathbf{m},j}(\underline{x}_i)).
\end{equation}

By substracting       Equation (\ref{eq; det1}) from Equation (\ref{eq; det2}) after taking valuation   we get
$$ \nu (\mathrm{det}(p_j(\underline{x}_i)))-\nu (\mathrm{det}(p_j(\underline{ a}_i)))= \nu (\mathrm{det}(F_{\mathbf{m},j}(\underline{x}_i)))-\nu (
\mathrm{det}(F_{\mathbf{m},j}(\underline{ a}_i))).$$
 Since $(F_{\mathbf{m},j})_{j \geq 0}$  are integer valued and $ \mathrm{det}(F_{\mathbf{m},j}(\underline{ a}_i))=1$, $ \nu (\mathrm{det}(p_j(\underline{x}_i))) \geq \nu (\mathrm{det}(p_j(\underline{ a}_i)))$ completing the proof.
\end{proof}

It follows from this result that the sequence 
$\nu_{\mathbf{m}}(\Delta(\underline{a}_0, \underline{ a}_1, \ldots, 
\underline{a}_r))$ is independent of the choice of the particular 
$\nu_{\mathbf{m}}$-ordering.

 \section{Fixed divisor in the case of Cartesian product of sets}{\label{s3}} This 
section is devoted to the case when $\underline{S}=S_1 \times 
S_2 \times \cdots \times S_n$ where each $S_i \subseteq R$.  We start this 
section by fixing few notations. For any $n$-tuple $(i_1,i_2,\ldots,i_n)= 
\mathbf{i}$, its sum of  components will be denoted by $\vert \mathbf{i} \vert$ and $\mathbf{i}!_{\underline{S}}$ will   denote $i_1!_{S_1} \ldots i_n!_{S_n}.$

\begin{pro}\label{Pro:relation between bhargava and our factorial} In the case when $\underline{S}=S_1 \times 
S_2 \times \cdots \times S_n $, we have
  \begin{equation}\label{Eqn:relation between bhargava and our factorial}
 \Gamma_{\mathbf{m}, k}(\underline{S})  
  =\operatorname*{lcm}\limits_{\substack{\mathbf{0}  \leq \mathbf{i} \leq 
\mathbf{m} \\ \vert \mathbf{i} \vert \leq k}}\mathbf{i}!_{\underline{S}}.
\end{equation}
\end{pro}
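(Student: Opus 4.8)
The plan is to localize at an arbitrary nonzero prime $\mathcal{P}$ of $R$ and thereby reduce to the case of a DVR $V$ with valuation $\nu$, exactly as in the proof of Theorem \ref{Bound for FD for polynomial of type (m,k)}, using Prop.~XI.1.9 of \cite{Cahen}; both sides of \eqref{Eqn:relation between bhargava and our factorial} commute with this localization (the left side by the cited result, the right side because $\mathbf{i}!_{\underline{S}}$ localizes componentwise to $\mathbf{i}!_{\underline{S}}$ over $V$, and lcm of ideals commutes with localization). So it suffices to prove that $\nu(\Gamma_{\mathbf{m},k}(\underline{S})) = \max_{\mathbf{0}\le \mathbf{i}\le \mathbf{m},\ |\mathbf{i}|\le k} \nu(\mathbf{i}!_{\underline{S}}) = \max \sum_{t=1}^{n}\nu(i_t!_{S_t})$, the maximum taken over the same index set.

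For the inequality $\nu(\Gamma_{\mathbf{m},k}(\underline{S})) \ge \max \nu(\mathbf{i}!_{\underline{S}})$, I would fix an index $\mathbf{i}$ achieving the maximum and, for each coordinate $t$, pick (by Bhargava's one-variable theory, or equivalently a $\nu$-ordering of $S_t$) a polynomial $g_t(x_t)\in K[x_t]$ of degree $i_t$ that is integer-valued on $S_t$ with $\nu(g_t) = -\nu(i_t!_{S_t})$. Then $g(\underline{x}) = \prod_{t=1}^n g_t(x_t)$ lies in $\mathrm{Int}_{\mathbf{m},k}(\underline{S},V)$ since $\deg(g)=\mathbf{i}\le\mathbf{m}$ and $\tdeg(g) = |\mathbf{i}|\le k$, and $\nu(g) = \sum_t \nu(g_t) = -\sum_t\nu(i_t!_{S_t})$ because the content of a product over disjoint variable sets is the product of the contents (Gauss's lemma coordinatewise). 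By the definition of $\Gamma_{\mathbf{m},k}$, this forces $\nu(\Gamma_{\mathbf{m},k}(\underline{S}))\ge -\nu(g) = \nu(\mathbf{i}!_{\underline{S}})$.

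For the reverse inequality $\nu(\Gamma_{\mathbf{m},k}(\underline{S})) \le \max \nu(\mathbf{i}!_{\underline{S}})$, the cleanest route is through the local formula in Corollary \ref{local definition of gamma}. I would choose the $\nu_{\mathbf{m}}$-ordering of $\underline{S}=S_1\times\cdots\times S_n$ built out of $\nu$-orderings of the individual $S_t$ — this is the product construction familiar from \cite{Ev} (and the coordinatewise Vandermonde/factorization of $\Delta_{\mathbf{m}}$ into products of one-variable Vandermonde-type determinants shows it really is a $\nu_{\mathbf{m}}$-ordering). With this choice, each ratio $\Delta_{\mathbf{m}}(\underline{a}_0,\ldots,\underline{a}_r)/\Delta_{\mathbf{m}}(s;\underline{a}_0,\ldots,\underline{a}_{r-1})$ appearing in \eqref{local_defn} factors, after the computation, into one-variable quantities whose valuations are exactly the $\nu(i_t!_{S_t})$ for an index $\mathbf{i}$ with $\mathbf{i}\le\mathbf{m}$ and $|\mathbf{i}|\le k$; taking the max over $r,s$ then yields precisely $\max\sum_t\nu(i_t!_{S_t})$. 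Alternatively, and perhaps more transparently, one can argue directly: any $f\in\mathrm{Int}_{\mathbf{m},k}(\underline{S},V)$ of type $(\mathbf{m},k)$ expands in the product basis $\{\prod_t F^{(t)}_{i_t}(x_t) : \mathbf{i}\le\mathbf{m}, |\mathbf{i}|\le k\}$ of $\mathrm{Int}_{\mathbf{m},k}(\underline{S},V)$ (Prop. \ref{Int_basis} applied coordinatewise), and since each $F^{(t)}_{i_t}$ has content exactly $-\nu(i_t!_{S_t})$, clearing denominators of $f$ requires at most $\max\sum_t\nu(i_t!_{S_t})$ powers of $\pi$.

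The main obstacle I anticipate is verifying rigorously that the product of one-variable $\nu$-orderings is a genuine $\nu_{\mathbf{m}}$-ordering of the Cartesian product, and the attendant multiplicativity of $\Delta_{\mathbf{m}}$ — this requires a careful bookkeeping argument, since the monomial basis of $K_{\mathbf{m}}[\underline{x}]$ is ordered by total degree rather than lexicographically, so the matrix $(p_j(\underline{a_i}))$ is not literally a Kronecker product and one must instead show that the valuations of its principal minors match those of the tensor product of the one-variable matrices. Everything else (Gauss's lemma across disjoint variable blocks, behavior under localization, reduction to the DVR case) is routine. To keep the exposition clean I would actually lead with the direct basis argument for both inequalities and relegate the determinant identities to a remark, citing the parallel facts in \cite{Ev}.
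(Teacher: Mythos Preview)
Your proposal is correct, and the ``alternative, more transparent'' route you sketch at the very end is in fact exactly the paper's proof. The paper localizes to a DVR, writes down the product polynomials $\binom{\underline{x}}{\mathbf{i}}_{\underline{S}}=\prod_{t}\binom{x_t}{i_t}_{S_t}$ built from one-variable $\nu$-orderings, asserts (with a reference to Proposition~\ref{Int_basis}) that these form a $V$-basis of $\mathrm{Int}_{\mathbf{m},k}(\underline{S},V)$ for $\mathbf{i}\le\mathbf{m}$, $|\mathbf{i}|\le k$, and then reads off $\nu(\Gamma_{\mathbf{m},k}(\underline{S}))$ as the maximum valuation among the denominators $\mathbf{i}!_{\underline{S}}$.

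In particular, the paper never passes through Corollary~\ref{local definition of gamma} or checks that the product of one-variable $\nu$-orderings is a $\nu_{\mathbf{m}}$-ordering of the Cartesian product, so the obstacle you anticipate simply does not arise. The determinant bookkeeping you worry about is avoided entirely by working with the explicit basis rather than with $\Delta_{\mathbf{m}}$; your instinct to ``lead with the direct basis argument'' is the right one and matches the paper.
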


\begin{proof}

 It suffices to prove in the case when $R$ is a DVR. Let $S \subseteq V$ be a 
non-empty subset of the DVR $V$ and $\lbrace a_i \rbrace_{i \geq 0}$ be some 
$\nu$-ordering of $S.$   Define
 $$\binom{x}{r}_S=  
\dfrac{(x-a_0)(x-a_1)\ldots(x-a_{r-1})}{(a_r-a_0)(a_r-a_1)\ldots(a_r-a_{r-1})}.$$ The denominator is clearly $r!_{S}$.  In our setting 
$\underline{S}=S_1 \times S_2 \times \cdots \times S_n$ with some choice of $\nu$-ordering for $S_j$'s   we can  define analogously for $\mathbf{i} \in \mathbb{W}^n$
\begin{equation}\label{basis of Int(S,V)}
\binom{\underline{x}}{\mathbf{i}}_{\underline{S}}=\binom{x_1}{i_1}_{S_1} 
\binom{x_2}{i_2}_{S_2} \cdots \binom{x_n}{i_n}_{S_n}.
\end{equation}
These polynomials form a $V$-module basis for 
Int$_{\mathbf{m},k}(\underline{S},V)$ provided we consider only those 
$\mathbf{i}$'s having the properties $\mathbf{i} \leq \mathbf{m}$ and $\vert \mathbf{i} \vert \leq k$ 
simultaneously as in Proposition \ref{Int_basis}.  As we 
pointed out in the proof of  Theorem \ref{Bound for FD for 
polynomial of type (m,k)},    $\nu ( \Gamma_{\mathbf{m}, k}(\underline{S}))$ will be the maximum of the valuations of the denominators of this basis, i.e.,

 $$\nu ( \Gamma_{\mathbf{m}, k}(\underline{S}))= \operatorname*{max} 
\limits_{\substack{\mathbf{0}  \leq \mathbf{i} \leq \mathbf{m} \\ \vert \mathbf{i} \vert \leq 
k}}\nu ( \mathbf{i}!_{\underline{S}}).$$

\end{proof}

For any subset $T\subseteq R$ and any ideal $I$ with prime factorization 
$ I=\prod_{i=1}^r P_i^{e_i}$, define an $\textbf{$I$-ordering}$ of $T$ to be a 
sequence 
$\{ a_j \}_{j=0}^{\infty}$ in $R$ which is congruent modulo $P_i^{e_i +1}$ to a 
$P_i$-ordering of $T$ for each $i=1,2,\ldots, r$. This type of sequence was also 
constructed by 
Bhargava (see \cite{Bhar1}, Sec. 3) in case of quotient of Dedekind domain.

Now, for our setting of $\underline{S}=S_1 \times S_2 
\times \cdots \times S_n$ and $I$ any fixed ideal, let $\{a_{i,j} \}_{i=0}^{\infty}$ 
be an $I$-ordering of $S_j$ for each $j=1,2,\ldots,n$. Given 
$\mathbf{i}=(i_1,i_2,\ldots,i_n) \in \mathbb{W}^n$, let 
$a_{\mathbf{i}}=(a_{i_1,1},a_{i_2,2},\ldots,a_{i_n,n})$ and the associated 
polynomial 
$B_{\mathbf{i}}(\underline{x}) = \prod_{j=1}^{n} 
\prod_{k=0}^{i_j-1}(x_j-a_{k,j})$. For a given prime ideal $P$ and ideal $J$ of $R$, $w_P(J)$ will denote the highest power of $P$ dividing $J$. With these notations the  following lemma is straightforward.

\begin{lemma}\label{lem2}
For all $\mathbf{i} \in \mathbb{W}^n$ such that 
$\Gamma_{\mathbf{i},\vert \mathbf{i} \vert}(\underline{S})$ divides $I$ we have
\begin{itemize}
	\item[(i)] For all $\mathbf{k}\in \mathbb{W}^n$ such that some 
component $k_j < i_j$, $B_{\mathbf{i}}(a_{\mathbf{k}})=0$;
	\item[(ii)] For all prime $P$ dividing $I$ and $\underline{s} \in 
\underline{S}$, $w_{P}(\Gamma_{\mathbf{i},\vert \mathbf{i} \vert}(\underline{S}))= 
w_{P}(B_{\mathbf{i}}(a_{\mathbf{i}}))$ and $ 
w_{P}(B_{\mathbf{i}}(a_{\mathbf{i}}))\ | \ 
w_{P}(B_{\mathbf{i}}(\underline{s}))$. 
\end{itemize}
\end{lemma}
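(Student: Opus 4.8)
The plan is to reduce everything to the one-variable setting via the product structure, exactly as in the proof of Proposition \ref{Pro:relation between bhargava and our factorial}. First I would recall that $B_{\mathbf{i}}(\underline{x})$ factors as $\prod_{j=1}^n \beta_{i_j,j}(x_j)$ where $\beta_{i_j,j}(x_j) = \prod_{k=0}^{i_j-1}(x_j - a_{k,j})$, so that the evaluation $B_{\mathbf{i}}(a_{\mathbf{k}}) = \prod_{j=1}^n \beta_{i_j,j}(a_{k_j,j})$ is a product over coordinates. For part (i), if some component $k_j < i_j$, then the factor $\beta_{i_j,j}(a_{k_j,j}) = \prod_{k=0}^{i_j - 1}(a_{k_j,j} - a_{k,j})$ contains the term with $k = k_j$ (which is legitimate since $k_j \le i_j - 1$), and that term is $a_{k_j,j} - a_{k_j,j} = 0$. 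Hence the whole product vanishes. This is the easy half and needs no hypothesis on $\Gamma$.

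For part (ii), fix a prime $P$ dividing $I$ and write $e = w_P(I)$, so $I$ is congruent to a $P$-ordering "at level $P^{e+1}$" in the sense of the $I$-ordering definition. The hypothesis that $\Gamma_{\mathbf{i},|\mathbf{i}|}(\underline{S})$ divides $I$, combined with Proposition \ref{Pro:relation between bhargava and our factorial} (which gives $\Gamma_{\mathbf{i},|\mathbf{i}|}(\underline{S}) = \operatorname{lcm}_{\mathbf{0}\le \mathbf{j}\le \mathbf{i},\,|\mathbf{j}|\le|\mathbf{i}|} \mathbf{j}!_{\underline{S}} = \mathbf{i}!_{\underline{S}}$ since $\mathbf{i}$ itself is the maximal admissible index), tells us that $w_P(\mathbf{i}!_{\underline{S}}) = w_P(i_1!_{S_1}) + \cdots + w_P(i_n!_{S_n}) \le e$, so in particular each $w_P(i_j!_{S_j}) \le e$. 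Now localize at $P$: since $\{a_{k,j}\}$ is an $I$-ordering of $S_j$, it is congruent mod $P^{e+1}$ to a $P$-ordering of $S_j$, and as $w_P(i_j!_{S_j}) \le e$ the valuation $w_P(\beta_{i_j,j}(a_{i_j,j})) = \sum_{k=0}^{i_j-1} w_P(a_{i_j,j} - a_{k,j})$ is determined by the residues mod $P^{e+1}$ and equals $w_P(i_j!_{S_j})$, the defining property of a $P$-ordering (each consecutive difference product realizing the minimal valuation). Similarly, for any $s_j \in S_j$, the $\nu_P$-ordering/Property B argument (Proposition \ref{prop_B} in one variable, or directly Bhargava's characterization of $i!_{S}$) gives $w_P(\beta_{i_j,j}(s_j)) \ge w_P(i_j!_{S_j}) = w_P(\beta_{i_j,j}(a_{i_j,j}))$. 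Multiplying over $j=1,\ldots,n$ yields $w_P(B_{\mathbf{i}}(a_{\mathbf{i}})) = \sum_j w_P(i_j!_{S_j}) = w_P(\mathbf{i}!_{\underline{S}}) = w_P(\Gamma_{\mathbf{i},|\mathbf{i}|}(\underline{S}))$, and $w_P(B_{\mathbf{i}}(\underline{s})) = \sum_j w_P(\beta_{i_j,j}(s_j)) \ge \sum_j w_P(i_j!_{S_j}) = w_P(B_{\mathbf{i}}(a_{\mathbf{i}}))$, which is the divisibility claimed (a nonnegative integer exponent dividing another in the sense of $P$-powers, i.e. the first is $\le$ the second).

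The step I expect to be the main (though modest) obstacle is the bookkeeping around the $I$-ordering: one must check carefully that the congruence mod $P^{e+1}$ is enough to compute $w_P(\beta_{i_j,j}(a_{i_j,j}))$ exactly — i.e., that no cancellation or extra divisibility beyond level $e$ can occur — and this is precisely where the hypothesis $\Gamma_{\mathbf{i},|\mathbf{i}|}(\underline{S}) \mid I$ (hence $w_P(i_j!_{S_j}) \le e$ for all $j$) is used. I would phrase this as a short one-variable sublemma: if $w_P(i!_S) \le e$ and $\{b_k\}$ agrees mod $P^{e+1}$ with a $P$-ordering of $S$, then $w_P\big(\prod_{k<i}(b_i - b_k)\big) = w_P(i!_S)$ and $w_P\big(\prod_{k<i}(s - b_k)\big) \ge w_P(i!_S)$ for every $s \in S$. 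Given that sublemma, part (ii) is immediate from the product formula for $B_{\mathbf{i}}$ and Proposition \ref{Pro:relation between bhargava and our factorial}; as the lemma statement says, the whole thing is then "straightforward".
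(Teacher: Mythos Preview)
The paper does not give a proof of this lemma at all; it simply declares it ``straightforward'' and moves on. Your argument is correct and is precisely the natural one in this Cartesian-product context: factor $B_{\mathbf{i}}(\underline{x})=\prod_j \beta_{i_j,j}(x_j)$, read part (i) off the vanishing of a single linear factor, and for part (ii) use Proposition~\ref{Pro:relation between bhargava and our factorial} to identify $\Gamma_{\mathbf{i},|\mathbf{i}|}(\underline{S})$ with $\mathbf{i}!_{\underline{S}}=\prod_j i_j!_{S_j}$, then reduce coordinatewise to the one-variable $P$-ordering property. Your one-variable sublemma and the observation that the hypothesis $\Gamma_{\mathbf{i},|\mathbf{i}|}(\underline{S})\mid I$ forces each $\nu_P(i_j!_{S_j})\le e$ (so that congruence mod $P^{e+1}$ pins down the valuation exactly) is indeed the only nontrivial point, and you have it right. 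One small notational remark: in the paper $w_P(J)$ is the \emph{ideal} $P^e$, not the exponent $e$, so when you write sums like $\sum_j w_P(i_j!_{S_j})$ you are implicitly passing to exponents; this is harmless but worth making explicit if you write it up.
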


Let $\lbrace  p_j \rbrace_{j \ge 0}$ be the monomials in $K_{\mathbf{m}}[\underline{x}]$ ordered in a sequence compatible with total degree (see the paragraph before Equation \ref{definition of new delta}). Given $f \in R[\underline{x}]$ of 
type $(\mathbf{m},k)$, we have
$$f(\underline{x})=\sum\limits_{j=0}^{\lmk -1}
c_jp_j(\underline{x}),$$ 
where all coefficients $c_j \in R$. We denote the degree of the last monomial in the above expression by $\mathbf{k}$.
We now take $I = \operatorname*{lcm}( d(\underline{S},f),\Gamma_{\mathbf{m},k}(\underline{S}))=\prod_{i=1}^r P_i^{e_i}$ 
and construct $a_{\mathbf{i}}=(a_{i_1,1},a_{i_2,2},\ldots,a_{i_n,n})$ as 
described above.  It can be seen that $f$ 
also has the following representation
 
 \begin{equation}\label{e2}
 f(\underline{x})= 
\sum\limits_{\substack{\mathbf{0}  \leq \mathbf{i} \leq 
\mathbf{m} \\    \vert \mathbf{i} \vert \le 
k}}b(\mathbf{i})B_{\mathbf{i}}(\underline{x})
.  \end{equation}
  We write this expression in such a way that it ends with $B_{\mathbf{k}}(\underline{x}).$ Now we present the main theorem of this section which can be viewed as a generalization of Theorem \ref{Pol} in this setting.

\begin{theorem}{\label{t2}} Let $f$ be a primitive polynomial of type $(\mathbf{m},k)$ 
and $b(\mathbf{i})$ be as in \eqref{e2}.
Then 
\begin{equation}\label{dsf}
 d(\underline{S},f) 
=(b({\mathbf{0}})\Gamma_{\mathbf{0},0}(\underline{S}), \ldots, 
b({\mathbf{i}})\Gamma_{\mathbf{i},\vert \mathbf{i} \vert}(\underline{S}), \ldots,  
b({\mathbf{k}})\Gamma_{\mathbf{k},\vert \mathbf{k} \vert}(\underline{S})).
\end{equation}
Consequently, $d(\underline{S},f)$ divides $\Gamma_{\mathbf{m},k}(\underline{S})$ and this is sharp. Conversely, for each $I$ dividing $\Gamma_{\mathbf{m},k}(\underline{S})$ there exists a primitive polynomial $f$ of type $(\mathbf{m},k)$ with $d(\underline{S},f)=I.$
  \end{theorem}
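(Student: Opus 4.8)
\textbf{Proof plan for Theorem \ref{t2}.}
The plan is to reduce everything to a DVR, as in the proof of Theorem \ref{Bound for FD for polynomial of type (m,k)}, using the localization statement $(\mathrm{Int}_{\mathbf{m},k}(\underline{S},R))_{\mathcal P}=\mathrm{Int}_{\mathbf{m},k}(\underline{S},R_{\mathcal P})$; equivalently, it suffices to check \eqref{dsf} prime by prime, i.e.\ to show that for every prime $P$ the highest power of $P$ dividing $d(\underline{S},f)$ equals $\min_{\mathbf{0}\le\mathbf{i}\le\mathbf{m},\ |\mathbf{i}|\le k} w_P\bigl(b(\mathbf{i})\Gamma_{\mathbf{i},|\mathbf{i}|}(\underline{S})\bigr)$. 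Since $I=\operatorname*{lcm}(d(\underline{S},f),\Gamma_{\mathbf{m},k}(\underline{S}))$ captures all the relevant primes, we may assume $P$ divides $I$ and work with the $I$-ordering data $a_{\mathbf{i}}$ and the polynomials $B_{\mathbf{i}}$, for which Lemma \ref{lem2} applies. The key point is that the expansion \eqref{e2} in the basis $\{B_{\mathbf{i}}\}$ is the \emph{right} basis for reading off $P$-adic information at the points $a_{\mathbf{k}}$, because of the triangularity encoded in Lemma \ref{lem2}(i).

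The main computation is to evaluate $f$ at the points $a_{\mathbf{k}}$ and run an induction in the total-degree-compatible order of the multi-indices. First I would observe that $w_P(d(\underline{S},f))=\min_{\underline{s}\in\underline{S}} w_P(f(\underline{s}))$ and that one direction of \eqref{dsf} is easy: evaluating \eqref{e2} at an arbitrary $\underline{s}\in\underline{S}$ and using Lemma \ref{lem2}(ii) (namely $w_P(B_{\mathbf{i}}(a_{\mathbf{i}}))\mid w_P(B_{\mathbf{i}}(\underline{s}))$ together with $w_P(\Gamma_{\mathbf{i},|\mathbf{i}|}(\underline S))=w_P(B_{\mathbf{i}}(a_{\mathbf i}))$) shows every term $b(\mathbf{i})B_{\mathbf{i}}(\underline{s})$ is divisible by $P^{w_P(b(\mathbf{i})\Gamma_{\mathbf{i},|\mathbf{i}|}(\underline S))}$, hence $f(\underline s)$ is divisible by $P$ to the power on the right-hand side of \eqref{dsf}; so $d(\underline{S},f)$ is contained in (is a multiple of) that ideal. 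For the reverse inclusion I would choose $\mathbf{i}_0$ achieving the minimum and evaluate $f$ at $a_{\mathbf{i}_0}$: by Lemma \ref{lem2}(i), $B_{\mathbf{i}}(a_{\mathbf{i}_0})=0$ whenever some component of $\mathbf{i}$ strictly exceeds the corresponding component of $\mathbf{i}_0$, which kills many terms; the surviving terms with $\mathbf{i}\le\mathbf{i}_0$ (other than $\mathbf{i}_0$ itself) contribute $P$-valuation strictly larger than $w_P(b(\mathbf{i}_0)B_{\mathbf{i}_0}(a_{\mathbf{i}_0}))$ — this is where the $I$-ordering was engineered, so that for $\mathbf{i}<\mathbf{i}_0$ one has $w_P(b(\mathbf{i})B_{\mathbf{i}}(a_{\mathbf{i}_0}))\ge 1+e_P$ where $e_P=w_P(I)$, hence these terms do not interfere — leaving $w_P(f(a_{\mathbf{i}_0}))=w_P(b(\mathbf{i}_0)\Gamma_{\mathbf{i}_0,|\mathbf{i}_0|}(\underline S))$, which shows $d(\underline{S},f)$ is not a multiple of a larger power of $P$.

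Once \eqref{dsf} is established, the two consequences are short. That $d(\underline{S},f)$ divides $\Gamma_{\mathbf{m},k}(\underline{S})$ follows because each generator $b(\mathbf{i})\Gamma_{\mathbf{i},|\mathbf{i}|}(\underline{S})$ is a multiple of $\Gamma_{\mathbf{i},|\mathbf{i}|}(\underline{S})$, which in turn divides $\Gamma_{\mathbf{m},k}(\underline{S})$ by Proposition \ref{Pro:relation between bhargava and our factorial} (or directly, since $\mathbf{i}\le\mathbf{m}$ and $|\mathbf{i}|\le k$ forces $\mathbf{i}!_{\underline S}\mid\operatorname*{lcm}_{\mathbf j\le\mathbf m,|\mathbf j|\le k}\mathbf j!_{\underline S}$). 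For the converse, given $I\mid\Gamma_{\mathbf{m},k}(\underline{S})$, pick a multi-index $\mathbf{i}^\ast$ with $\mathbf{i}^\ast\le\mathbf{m}$, $|\mathbf{i}^\ast|\le k$ and $\Gamma_{\mathbf{i}^\ast,|\mathbf{i}^\ast|}(\underline{S})=\operatorname*{lcm}_{\dots}\mathbf{j}!_{\underline S}$ a multiple of $I$ for each relevant prime (working one prime at a time and then using CRT/Dedekind to assemble a single polynomial, as in the sharpness part of Theorem \ref{Bound for FD for polynomial of type (m,k)}); then an appropriate $R$-combination of the $B_{\mathbf{i}}$ — essentially scaling $B_{\mathbf{i}^\ast}$ and completing to a primitive polynomial of type exactly $(\mathbf{m},k)$ — realizes $d(\underline{S},f)=I$ via \eqref{dsf}. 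The main obstacle is the middle step: controlling, prime by prime, the valuations of the ``off-diagonal'' terms $b(\mathbf{i})B_{\mathbf{i}}(a_{\mathbf{i}_0})$ for $\mathbf{i}\ne\mathbf{i}_0$ and verifying they never cancel the diagonal term, which is exactly the role the $I$-ordering and Lemma \ref{lem2} are designed to play, but it requires care in handling the case $\mathbf i\le\mathbf i_0$ with $\mathbf i\ne\mathbf i_0$ versus $\mathbf i$ incomparable to $\mathbf i_0$.
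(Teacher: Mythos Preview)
Your framework (localize, use the basis $\{B_{\mathbf{i}}\}$, invoke Lemma~\ref{lem2}) is the right one, and your ``easy direction'' matches the paper. But the other direction and the first consequence both have real gaps.

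\medskip

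\textbf{The reverse inclusion.} Your plan is to pick a single witness $\mathbf{i}_0$ minimizing $w_P(b(\mathbf{i})\Gamma_{\mathbf{i},|\mathbf{i}|}(\underline S))$ and read off $w_P(f(a_{\mathbf{i}_0}))$. The claimed bound $w_P(b(\mathbf{i})B_{\mathbf{i}}(a_{\mathbf{i}_0}))\ge 1+e_P$ for $\mathbf{i}<\mathbf{i}_0$ is false in general: nothing about the $I$-ordering forces the coefficients $b(\mathbf{i})$ to carry that much $P$-adic weight. Concretely, with $n=1$, $S=\Z$, $P=(2)$ and $f(x)=1-x=B_0-B_1$, both $b(0)\Gamma_0$ and $b(1)\Gamma_1$ have $2$-valuation $0$; choosing $\mathbf{i}_0=1$ gives $f(a_1)=f(1)=0$, so the ``diagonal term dominates'' heuristic collapses. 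Your approach can be repaired by choosing $\mathbf{i}_0$ to be \emph{minimal in the componentwise partial order} among the minimizers: then every surviving $\mathbf{i}\le\mathbf{i}_0$, $\mathbf{i}\neq\mathbf{i}_0$, fails to be a minimizer, giving the needed strict inequality via Lemma~\ref{lem2}(ii). The paper avoids this delicacy altogether: it fixes $P^e=w_P(d(\underline S,f))$, notes $P^e\mid f(a_{\mathbf j})$ for every $\mathbf j$ (via the $I$-ordering congruence), and proves $P^e\mid b(\mathbf j)\Gamma_{\mathbf j,|\mathbf j|}(\underline S)$ by induction on $|\mathbf j|$, peeling off the lower terms using Lemma~\ref{lem2}. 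This inductive route is cleaner and needs no minimizer bookkeeping.

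\medskip

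\textbf{The consequence $d(\underline S,f)\mid\Gamma_{\mathbf m,k}(\underline S)$.} Your sentence ``each generator $b(\mathbf{i})\Gamma_{\mathbf{i},|\mathbf{i}|}(\underline S)$ is a multiple of $\Gamma_{\mathbf{i},|\mathbf{i}|}(\underline S)$, which divides $\Gamma_{\mathbf{m},k}(\underline S)$'' does not yield the conclusion: knowing that each generator lies in some ideal containing $\Gamma_{\mathbf m,k}(\underline S)$ says nothing about whether their gcd divides $\Gamma_{\mathbf m,k}(\underline S)$. You have not used primitivity, and without it the statement is false (take all $b(\mathbf i)$ divisible by $P$). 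The paper's argument is: the change of basis from $\{p_j\}$ to $\{B_{\mathbf i}\}$ is unimodular, so $(b(\mathbf 0),\ldots,b(\mathbf k))=(c_0,\ldots,c_{\lmk-1})=R$; hence for every prime $P$ some $b(\mathbf i)$ is a $P$-unit, and then $w_P(d(\underline S,f))\le w_P(\Gamma_{\mathbf i,|\mathbf i|}(\underline S))\le w_P(\Gamma_{\mathbf m,k}(\underline S))$.
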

\begin{proof}    Let $P_j$ be any prime ideal dividing $d(\underline{S},f)$ and 
$P_j^e=w_{P_j}(d(\underline{S},f))$. Then, by construction 
$f(a_{\mathbf{i}}) \equiv f(\underline{s})$ modulo $P_j^{e_j+1}$ for some 
$\underline{s} \in \underline{S}$ and hence
$P_j^e$  divides  $f(a_{\mathbf{i}})$.  
We claim that $P_j^e$ divides $b(\mathbf{i})\Gamma_{\mathbf{i},\vert \mathbf{i} \vert}(\underline{S})$ 
and 
establish it by 
induction on $\vert \mathbf{i} \vert$ as follows. 

\medskip

The base case is clear from the observation that $f(a_{\mathbf{0}}) = 
b({\mathbf{0}}) \Gamma_{\mathbf{0},0}(\underline{S})$. Let 
induction hypothesis be true for all $\mathbf{i}$ for which $\vert \mathbf{i} \vert  
 \le r$. Let $\mathbf{j}$ be an arbitrary index 
such that $\vert\mathbf{j} \vert=r+1$. Consider the expansion \eqref{e2} of 
$f(a_{\mathbf{j}})$. By Lemma \ref{lem2}(i), the sum is over the indices 
$\mathbf{i}\leq \mathbf{j}$. All of these indices, excluding $\mathbf{j}$, have 
sum of components less than or equal to $r$. Hence by Lemma \ref{lem2}(ii) and 
the induction hypothesis, we get the desired result that
$P_j^e$ divides $b(\mathbf{j}) \Gamma_{\mathbf{j},\vert \mathbf{j} \vert}(\underline{S})$. This 
establishes the claim. Consequently  $P_j^e$ and hence $d(\underline{S},f)$ divides 
$(b({\mathbf{0}})\Gamma_{\mathbf{0},\vert \mathbf{0} \vert}(\underline{S}), \ldots, 
b({\mathbf{i}})\Gamma_{\mathbf{i},\vert \mathbf{i} \vert}(\underline{S}), \ldots,  
b({\mathbf{k}})\Gamma_{\mathbf{k},\vert \mathbf{k} \vert}(\underline{S}))$.

In the other direction  $(b({\mathbf{0}})\Gamma_{\mathbf{0},\vert \mathbf{0} \vert}(\underline{S}), \ldots, 
b({\mathbf{i}})\Gamma_{\mathbf{i},\vert \mathbf{i} \vert}(\underline{S}), \ldots,  
b({\mathbf{k}})\Gamma_{\mathbf{k},\vert \mathbf{k} \vert}(\underline{S}))$  divides 
$f(\underline{s})$ for 
all $\underline{s} \in \underline{S}$ (by Lemma \ref{lem2}(ii)) and hence 
divides 
$d(\underline{S},f)$ too. This establishes \eqref{dsf}.

\medskip

Note that $(b(\mathbf{0}),\ldots, b(\mathbf{k}))= (c(\mathbf{0}),\ldots, 
c(\mathbf{k}))$ due to the unimodularity of the matrix which transforms one set 
of coefficients to the other. 
So, when $f$ is primitive and $P$ divides $d(\underline{S},f)$, there exists 
$\mathbf{i}$ such that $P$ does not divide $b(\mathbf{i})$. 
Then $w_P(d(\underline{S},f))$ divides $\Gamma_{\mathbf{i},\vert \mathbf{i} \vert}(\underline{S})$. 
Since 
$\mathbf{i} \leq \mathbf{m}$ and $\vert \mathbf{i} \vert \le k$, 
 $\Gamma_{\mathbf{i},\vert \mathbf{i} \vert}(\underline{S})$  must divide 
$\Gamma_{\mathbf{m},k}(\underline{S})$, which gives the desired result. For 
every ideal $I$ dividing $\Gamma_{\mathbf{m},k}(\underline{S})$ selection of 
$b(\mathbf{i})$'s suitably  will give us  a primitive polynomial $f$ such that 
$d(\underline{S},f)=I$. This proves  sharpness also. 
\end{proof}
  
   Relaxing the condition of total degree $k$ in  Theorem \ref{t2} and  using Proposition \ref{Pro:relation between bhargava and our factorial} we get
 \begin{cor}[Bhargava \cite{Bhar2}] \label{Bhargava's theorem}
 Let $f \in R[\underline{x}]$ be a primitive polynomial of degree $\mathbf{m}$ and $b(\mathbf{i})$ be as in \eqref{e2} (with appropriate restrictions on $\mathbf{i}$). Then
\begin{equation}\label{dsf of Bhargava in multivariate case}
 d(\underline{S},f) 
=(b({\mathbf{0}}) \mathbf{0}!_{\underline{S}}, \ldots, 
b({\mathbf{i}})\mathbf{i}!_{\underline{S}}, \ldots,  
b({\mathbf{m}})\mathbf{m}!_{\underline{S}}).
\end{equation}
   Hence  $d(\underline{S},f)$ divides $ \mathbf{m}!_{\underline{S}}$ and 
this is sharp. Conversely, for each $I$ dividing $ \mathbf{m}!_{\underline{S}}$ there exists a primitive polynomial $f$ of degree $\mathbf{m}$ with $d(\underline{S},f)=I.$
   \end{cor}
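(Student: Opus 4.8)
The plan is to obtain Corollary \ref{Bhargava's theorem} as the ``$k$ large'' specialisation of Theorem \ref{t2}, using Proposition \ref{Pro:relation between bhargava and our factorial} to rewrite each factorial $\Gamma_{\mathbf{i},\vert\mathbf{i}\vert}(\underline{S})$ appearing there as the product $\mathbf{i}!_{\underline{S}}$.

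\textbf{Step 1 (removing the total--degree restriction).} Given a primitive $f\in R[\underline{x}]$ of degree $\mathbf{m}$, set $k=\tdeg(f)$, so that $f$ is of type $(\mathbf{m},k)$ with $k\le\vert\mathbf{m}\vert$. In the expansion of $f$ over the basis $\{B_{\mathbf{i}}\}_{\mathbf{0}\le\mathbf{i}\le\mathbf{m}}$ of $K_{\mathbf{m}}[\underline{x}]$, each $B_{\mathbf{i}}$ has total degree exactly $\vert\mathbf{i}\vert$ with leading monomial $x_1^{i_1}\cdots x_n^{i_n}$; hence the homogeneous component of $f$ of degree $d^{*}:=\max\{\vert\mathbf{i}\vert:b(\mathbf{i})\neq0\}$ equals $\sum_{\vert\mathbf{i}\vert=d^{*}}b(\mathbf{i})x^{\mathbf{i}}$, which is nonzero, forcing $d^{*}=\tdeg(f)=k$ and therefore $b(\mathbf{i})=0$ whenever $\vert\mathbf{i}\vert>k$. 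Consequently the expansion \eqref{e2} attached to this $k$ is literally the (unrestricted) expansion written in the statement of the corollary, and Theorem \ref{t2} applies verbatim, giving $d(\underline{S},f)=\bigl(b(\mathbf{i})\,\Gamma_{\mathbf{i},\vert\mathbf{i}\vert}(\underline{S}):\mathbf{0}\le\mathbf{i}\le\mathbf{m}\bigr)$.

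\textbf{Step 2 (identifying the factorials and the bound).} By Proposition \ref{Pro:relation between bhargava and our factorial}, $\Gamma_{\mathbf{i},\vert\mathbf{i}\vert}(\underline{S})=\operatorname*{lcm}_{\mathbf{0}\le\mathbf{j}\le\mathbf{i}}\mathbf{j}!_{\underline{S}}$, since the condition $\vert\mathbf{j}\vert\le\vert\mathbf{i}\vert$ is automatic once $\mathbf{j}\le\mathbf{i}$. Property A for Bhargava's factorial, applied in each coordinate $S_l$ (namely $j_l!_{S_l}(i_l-j_l)!_{S_l}$ divides $i_l!_{S_l}$, so $j_l!_{S_l}\mid i_l!_{S_l}$ for $j_l\le i_l$), yields $\mathbf{j}!_{\underline{S}}\mid\mathbf{i}!_{\underline{S}}$ whenever $\mathbf{j}\le\mathbf{i}$; hence the lcm collapses to $\mathbf{i}!_{\underline{S}}$, and substituting gives \eqref{dsf of Bhargava in multivariate case}. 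In particular $\Gamma_{\mathbf{m},\vert\mathbf{m}\vert}(\underline{S})=\mathbf{m}!_{\underline{S}}$, and (again by Proposition \ref{Pro:relation between bhargava and our factorial}) $\Gamma_{\mathbf{m},k}(\underline{S})=\operatorname*{lcm}_{\mathbf{0}\le\mathbf{i}\le\mathbf{m},\ \vert\mathbf{i}\vert\le k}\mathbf{i}!_{\underline{S}}$ divides $\mathbf{m}!_{\underline{S}}$, since each member of this lcm does. Combining with the divisibility $d(\underline{S},f)\mid\Gamma_{\mathbf{m},k}(\underline{S})$ supplied by Theorem \ref{t2} gives $d(\underline{S},f)\mid\mathbf{m}!_{\underline{S}}$.

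\textbf{Step 3 (converse and sharpness).} Apply the converse half of Theorem \ref{t2} with $k=\vert\mathbf{m}\vert$: by Step 2, $\Gamma_{\mathbf{m},\vert\mathbf{m}\vert}(\underline{S})=\mathbf{m}!_{\underline{S}}$, so any ideal $I$ dividing $\mathbf{m}!_{\underline{S}}$ divides $\Gamma_{\mathbf{m},\vert\mathbf{m}\vert}(\underline{S})$, and Theorem \ref{t2} produces a primitive polynomial $f$ of type $(\mathbf{m},\vert\mathbf{m}\vert)$ — in particular of degree $\mathbf{m}$ — with $d(\underline{S},f)=I$. Taking $I=\mathbf{m}!_{\underline{S}}$ gives the sharpness of the bound. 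There is no genuine obstacle in this argument; the only points needing care are the bookkeeping in Step 1 (that the coefficients $b(\mathbf{i})$ with $\vert\mathbf{i}\vert>\tdeg(f)$ vanish, so that the two forms of the expansion agree) and the lcm--collapse in Step 2, both of which are routine.
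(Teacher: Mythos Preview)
Your proposal is correct and follows essentially the same route as the paper: the authors derive the corollary by ``relaxing the condition of total degree $k$ in Theorem \ref{t2} and using Proposition \ref{Pro:relation between bhargava and our factorial}'', which is precisely your Steps 1--3. Your write-up is more explicit than theirs on two points they leave implicit --- that the coefficients $b(\mathbf{i})$ with $\vert\mathbf{i}\vert>\tdeg(f)$ vanish, and that the lcm in Proposition \ref{Pro:relation between bhargava and our factorial} collapses to $\mathbf{i}!_{\underline{S}}$ via Bhargava's one-variable Property~A --- but these are exactly the details a reader would supply when unpacking their one-sentence proof.
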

  
   The following corollary shows the behaviour of the fixed divisor of a multivariate separable  
polynomial. Its proof follows by induction on the number of variables and 
by Theorem \ref{t2}.

\begin{cor}\label{cor:fixed divisor of product of different variable 
polynomials} Let $f_i(x_i) \in R[x_i]$ for  $ 1 \le i \le k.$  Then

$$d(S_1 \times S_2 \ldots \times S_k ,f_1f_2 \ldots f_k) =d(S_1,f_1)d(S_2,f_2)\ldots 
d(S_k,f_k).$$

\end{cor}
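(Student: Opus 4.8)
The statement to prove is Corollary~\ref{cor:fixed divisor of product of different variable polynomials}: for single-variable polynomials $f_i(x_i)\in R[x_i]$, the fixed divisor of the product over the Cartesian product splits as the product of the individual fixed divisors. The plan is to argue by induction on $k$, with the two-variable case $k=2$ carrying the essential content; the general case then follows by regarding $f_1\cdots f_{k-1}$ as a polynomial in the single ``block'' of variables $x_1,\ldots,x_{k-1}$ and applying the two-variable statement together with the induction hypothesis. Since the fixed divisor localizes (as used in the proof of Theorem~\ref{Bound for FD for polynomial of type (m,k)}), it suffices to treat the case $R=V$ a DVR, where everything becomes a statement about valuations.

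First I would set up the $k=2$ case concretely. Write $f_1(x_1)=\sum_i c_i x_1^{\,i}$ with $\deg f_1 = m_1$, and pick a $\nu$-ordering $\{a_{i,1}\}$ of $S_1$ and $\{a_{i,2}\}$ of $S_2$. Expanding $f_1$ in the basis $B_i(x_1)=\prod_{k<i}(x_1-a_{k,1})$ gives $f_1(x_1)=\sum_i b^{(1)}(i)B_i(x_1)$ with the $b^{(1)}(i)$ obtained from the $c_i$ by a unimodular transformation, and similarly $f_2(x_2)=\sum_j b^{(2)}(j)B_j(x_2)$. Then $f_1 f_2 = \sum_{i,j} b^{(1)}(i)b^{(2)}(j)\,B_i(x_1)B_j(x_2) = \sum_{\mathbf{i}} b(\mathbf{i})B_{\mathbf{i}}(\underline{x})$ with $b(\mathbf{i})=b^{(1)}(i_1)b^{(2)}(i_2)$, which is precisely the expansion~\eqref{e2} for $\underline{S}=S_1\times S_2$. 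Note that $f_1 f_2$ is primitive (over a DVR, Gauss's lemma: the product of primitive polynomials is primitive), so Theorem~\ref{t2} applies and gives
\begin{equation*}
d(S_1\times S_2,\,f_1 f_2) = \bigl(\,b^{(1)}(i_1)b^{(2)}(i_2)\,\Gamma_{(i_1,i_2),\,i_1+i_2}(S_1\times S_2)\ :\ 0\le i_1\le m_1,\ 0\le i_2\le m_2\,\bigr).
\end{equation*}
By Proposition~\ref{Pro:relation between bhargava and our factorial} (the single-variable version applied to each of the degree-restricted factorials, or directly the observation that $\mathbf{i}!_{\underline{S}}=i_1!_{S_1}\,i_2!_{S_2}$), one has $\nu\bigl(\Gamma_{(i_1,i_2),\,i_1+i_2}(S_1\times S_2)\bigr)=\nu(i_1!_{S_1})+\nu(i_2!_{S_2})$, since imposing $\mathbf{i}\le(i_1,i_2)$ and $|\mathbf{i}|\le i_1+i_2$ forces $\mathbf{i}=(i_1,i_2)$ in the lcm. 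Hence in the DVR, $\nu\bigl(d(S_1\times S_2,f_1f_2)\bigr) = \min_{i_1,i_2}\bigl[\nu(b^{(1)}(i_1))+\nu(i_1!_{S_1})+\nu(b^{(2)}(i_2))+\nu(j!_{S_2})\bigr]$, and since the minimum of a sum of two independent quantities is the sum of the minima, this equals $\bigl[\min_{i_1}(\nu(b^{(1)}(i_1))+\nu(i_1!_{S_1}))\bigr]+\bigl[\min_{i_2}(\nu(b^{(2)}(i_2))+\nu(i_2!_{S_2}))\bigr]$. By Corollary~\ref{Bhargava's theorem} (the single-variable Bhargava formula) these two bracketed quantities are $\nu(d(S_1,f_1))$ and $\nu(d(S_2,f_2))$ respectively, giving $d(S_1\times S_2,f_1f_2)=d(S_1,f_1)\,d(S_2,f_2)$ in $V$, and hence globally after reassembling over all primes.

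For the inductive step, write $g=f_1\cdots f_{k-1}\in R[x_1,\ldots,x_{k-1}]$ and $\underline{T}=S_1\times\cdots\times S_{k-1}$. A subtlety is that $g$ need not be primitive as a polynomial in $(x_1,\ldots,x_{k-1})$; but the cleanest route is to not separate $f_k$ off last but instead apply the two-variable argument with the roles ``$x_1$'' $\leftrightarrow$ the single variable $x_1$ and ``$x_2$'' $\leftrightarrow$ the block $(x_2,\ldots,x_k)$, then recurse. Concretely: $d(S_1\times\cdots\times S_k,\,f_1\cdot(f_2\cdots f_k))$; since $f_1$ involves only $x_1$ and $f_2\cdots f_k$ involves only $x_2,\ldots,x_k$, the very same $B_{\mathbf{i}}$-factorization argument as above (with the $\nu$-orderings of $S_1$ and of $S_2\times\cdots\times S_k$) applied via Theorem~\ref{t2} and Proposition~\ref{Pro:relation between bhargava and our factorial} yields $d=d(S_1,f_1)\cdot d(S_2\times\cdots\times S_k,\,f_2\cdots f_k)$, and the second factor is handled by the induction hypothesis. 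The main obstacle — and the point deserving the most care — is exactly this bookkeeping: verifying that the $B_{\mathbf{i}}$-expansion of a product of polynomials in disjoint sets of variables factors as a product of $B$-expansions, that the resulting coefficient array is still related to the monomial coefficients by a unimodular matrix (so primitivity of the product, which does hold by Gauss over each localization, transfers to the $b(\mathbf{i})$), and that the degree-restricted factorial $\Gamma_{\mathbf{i},|\mathbf{i}|}$ of a Cartesian product collapses to the product $\mathbf{i}!_{\underline{S}}$ of Bhargava factorials. Once these three routine-but-fiddly facts are in hand, the ``$\min$ of a sum $=$ sum of $\min$s'' observation does the rest at each prime.
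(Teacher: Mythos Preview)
Your approach matches the paper's one-line sketch (``by induction on the number of variables and by Theorem~\ref{t2}''): localize to a DVR, expand $f_1\cdots f_k$ in the product $B_{\mathbf i}$-basis so that the coefficients factor as $b(\mathbf i)=\prod_j b^{(j)}(i_j)$, apply formula~\eqref{dsf}, and use that the minimum of a separable sum is the sum of the minima. Two small corrections: the corollary does not assume the $f_i$ primitive, so before invoking Gauss you should first factor out contents in the DVR (or simply note that the derivation of~\eqref{dsf} nowhere uses primitivity); and in your computation of $\Gamma_{(i_1,i_2),\,i_1+i_2}(\underline S)$ the constraints do \emph{not} force $\mathbf j=(i_1,i_2)$---rather, the $\operatorname{lcm}$ in Proposition~\ref{Pro:relation between bhargava and our factorial} collapses to $i_1!_{S_1}i_2!_{S_2}$ because $j!_S\mid j'!_S$ whenever $j\le j'$. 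As a side remark, the statement also admits a direct proof that bypasses Theorem~\ref{t2} entirely: by definition $d(S_1\times\cdots\times S_k,\,f_1\cdots f_k)$ is the ideal generated by all products $f_1(s_1)\cdots f_k(s_k)$ with $s_j\in S_j$, and the ideal generated by such products is precisely the product $\prod_j d(S_j,f_j)$.
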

  
 We close this section  by comparing various bounds of fixed divisor. Recall (see \cite{Ev}, 
Example 3) that the generalized factorial $k!_{\underline{S}}$ when 
$\underline{S}$ is a Cartesian product is given by 
\begin{equation} \label{evrard}
k!_{\underline{S}} = \operatorname*{lcm}\limits_{ \vert \mathbf{i} \vert = 
k}\mathbf{i}!_{\underline{S}}.
\end{equation} 
 For a given primitive polynomial $f$ of type
$(\mathbf{m},k)$, Corollary \ref{Bhargava's theorem} and Theorem \ref{Evrard's 
bound}
give different bounds on the fixed divisor, viz. 
$ \mathbf{m}!_{\underline{S}}$ and $k!_{\underline{S}}$ respectively and 
these are not comparable in general. Depending upon the values of $\mathbf{m}$, 
$k$ and the nature of the subsets $S_i$, any one result might be stronger than 
the other.

For example, let $\underline{S}= \mathbb{Z} \times \mathbb{Z}$  and $f$ be a 
polynomial with integer coefficients
with degree $(5,5)$. If the total degree is $10$ (for e.g., $f(x,y)= x^5y^5$) 
then Theorem \ref{t2} asserts that its fixed divisor will divide $5!5!$ whereas 
Theorem \ref{Evrard's bound} asserts that it will divide $10!$. In this case the 
former is 
stronger than the latter.
On the other hand, if the total degree of the polynomial $f$ is  $5$ (for e.g., 
$f(x,y)=x^5+y^5$) then Theorem \ref{t2} still says that its fixed divisor will 
divide $5!5!$ whereas Theorem \ref{Evrard's bound} says that it will divide 
$5!$. In this 
case the latter is stronger. 

Now we note that our factorial always gives a stronger result. If  $f(x,y)= 
x^5y^5 $  then $d(\underline{S},f)$ divides 
$\Gamma_{(5,5),10}(\underline{S})=5!5!$ and if $f(x,y)=x^5+y^5$  
 then $d(\underline{S},f)$ divides 
$\Gamma_{(5,5),5}(\underline{S})=5!.$ Thus in both the cases we get a better bound and this is not a coincidence!  $\Gamma_{\mathbf{m},k}(\underline{S})$ always divides $k!_{\underline{S}}$ and $\mathbf{m}!_{\underline{S}}$ but need not to be equal to their g.c.d. as Example \ref{various bounds for fixed divisor} suggests.
 
\section{Formula for fixed divisor in the general case}{\label{s4}}

In this section we look for various formulae for $d(\underline{S},f)$ when 
$\underline{S}$ is an arbitrary subset of $R^n$ such that $I(\underline{S})=\{0\}$ 
and $R$ is a Dedekind domain 
with field of fractions $K$. We start with few notations; $f(\underline{x})$ 
will denote a primitive polynomial of type $(\mathbf{m},k)$ and  $\mathbb{P}= 
\lbrace P_1, P_2,\ldots, P_r \rbrace$ will denote the set of all prime ideals of 
$R$ which appear in the prime factorization of $\Gamma_{\mathbf{m}, 
k}(\underline{S})$.
For each prime ideal $P_i \in \mathbb{P}$, the localization $R_{P_i}$ is a DVR 
with valuation $\nu_i$, say. For $j=1,2,\ldots,r$, let 
$\{\underline{a}_{i,j}\}_{i=0}^{\lmk-1}$ be a $\nu_{\mathbf{m},j}$-ordering 
(i.e., $\nu_{\mathbf{m}}$-ordering corresponding to $\nu_j$) of $\underline{S}$ 
and ${e_j}= \nu_j \left(\Delta_{\mathbf{m}}(\underline{a}_{0,j}, \underline{ 
a}_{1,j},\ldots, \underline{a}_{\lmk-1,j})\right)$. Now consider a sequence 
$\{\underline{a}_i\}_{i=0}^{\lmk-1}$ in $R^n$ which satisfies the congruences 
\begin{equation}{\label{e3}}
\underline{a}_i \equiv \underline{a}_{i,j}\ \mathrm{mod}\ P_j^{e_j + 1}.
\end{equation}

Here $\underline{x} \equiv \underline{y}\ \mathrm{mod}\ I$ denotes $x_i 
\equiv y_i\ \mathrm{mod}\ I$ for all the components $1 \leq i \leq n$. We will 
define the polynomials $B_0(\underline{x})=1$ and $B_j(\underline{x})= 
\Delta_{\mathbf{m}}(\underline{a}_0, \underline{ 
a}_1,\ldots,\underline{a}_{j-1}, 
\underline{x})$ for $1\leq j < \lmk$. The following lemmas are  easy to prove and hence we omit the proofs.

\begin{lemma}{\label{l1}} For $1\leq j < \lmk$, we have
\begin{itemize}
	\item[(i)] For every $P_i \in \mathbb{P}$, and $\underline{s} \in 
\underline{S}$, $\nu_i(B_{j}(\underline{a}_j)) \le 
\nu_i(B_{j}(\underline{s}))$;
	\item[(ii)] For $0\leq m<j$, $B_{j}(\underline{a}_{m})=0$. 
\end{itemize}
\end{lemma}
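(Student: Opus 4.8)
\textbf{Proof proposal for Lemma \ref{l1}.}

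The plan is to mimic, in the general Dedekind setting, the pattern used for Lemma \ref{lem2}, transferring everything to the local rings $R_{P_i}$ where the $\nu_{\mathbf{m}}$-ordering machinery of Section \ref{nu-ordering} is available. First I would fix a prime $P_i \in \mathbb{P}$ and localize at $P_i$; by construction the global sequence $\{\underline{a}_m\}$ agrees with the local $\nu_{\mathbf{m},i}$-ordering $\{\underline{a}_{m,i}\}$ modulo $P_i^{e_i+1}$, and since $e_i$ is (by definition) at least the $\nu_i$-valuation of every relevant determinant $\Delta_{\mathbf{m}}$ and every value $B_j(\underline{s})$ we will encounter, reducing mod $P_i^{e_i+1}$ does not change the $\nu_i$-valuation of $B_j(\underline{a}_m)$ or $B_j(\underline{s})$. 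Hence it suffices to prove both assertions with $\underline{a}_m$ replaced by $\underline{a}_{m,i}$, i.e. for the honest $\nu_{\mathbf{m},i}$-ordering of $\underline{S}$ inside the DVR $R_{P_i}$.

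For part (ii), with the substitution above, $B_j(\underline{a}_{m,i}) = \Delta_{\mathbf{m}}(\underline{a}_{0,i},\ldots,\underline{a}_{j-1,i},\underline{a}_{m,i})$, which for $m<j$ is a determinant with a repeated row and therefore vanishes; then $B_j(\underline{a}_m) \equiv 0$ mod $P_i^{e_i+1}$ for every $i$, and since this holds for all primes in $\mathbb{P}$ and (trivially) the valuation being large at the relevant primes forces $B_j(\underline{a}_m)=0$ as an element of $R$ — more cleanly, $B_j(\underline{a}_{m,i})=0$ in $K$ directly, so the polynomial identity $B_j(\underline{a}_m)=0$ already holds before localizing once we note $\underline{a}_m$ and $\underline{a}_{m,i}$ give the same value up to the ideal and the value is $0$ in a domain. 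For part (i), again after reduction, $\nu_i(B_j(\underline{a}_{j,i})) = \nu_i(\Delta_{\mathbf{m}}(\underline{a}_{0,i},\ldots,\underline{a}_{j,i}))$, which by the defining minimality property of a $\nu_{\mathbf{m},i}$-ordering equals $\inf_{\underline{a}\in\underline{S}} \nu_i(\Delta_{\mathbf{m}}(\underline{a}_{0,i},\ldots,\underline{a}_{j-1,i},\underline{a})) \le \nu_i(B_j(\underline{s}))$ for every $\underline{s}\in\underline{S}$; one can also invoke Proposition \ref{prop_B} directly. Translating back through the congruence \eqref{e3} (using that $e_i$ was chosen large enough) gives $\nu_i(B_j(\underline{a}_j)) \le \nu_i(B_j(\underline{s}))$, as desired.

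The only genuinely delicate point is the bookkeeping with the exponents $e_j$: one must check that $e_j$ as defined (the $\nu_j$-valuation of the full ordering determinant $\Delta_{\mathbf{m}}(\underline{a}_{0,j},\ldots,\underline{a}_{\lmk-1,j})$) really does dominate the $\nu_j$-valuations of all the sub-determinants $B_r(\underline{a}_{r,j})$ and of all values $B_r(\underline{s})$ for $\underline{s}\in\underline{S}$, so that the congruence \eqref{e3} is "fine enough" to preserve valuations on both sides of the inequality in (i). This follows because each such sub-determinant valuation is a partial sum in the telescoping expression $\nu_j(\Delta_{\mathbf{m}}(\underline{a}_{0,j},\ldots,\underline{a}_{\lmk-1,j})) = \sum_{r} \nu_j(\Delta_{\mathbf{m}}(\underline{a}_{0,j},\ldots,\underline{a}_{r,j})/\Delta_{\mathbf{m}}(\underline{a}_{0,j},\ldots,\underline{a}_{r-1,j}))$, and each factor has nonnegative valuation by the minimality/ordering property together with $F_{\mathbf{m},r}$ being integer-valued; hence every partial product has valuation at most $e_j$, and likewise $\nu_j(B_r(\underline{s})) = \nu_j(\Delta_{\mathbf{m}}(\underline{a}_{0,j},\ldots,\underline{a}_{r-1,j},\underline{s})) \le \nu_j(\Delta_{\mathbf{m}}(\underline{a}_{0,j},\ldots,\underline{a}_{\lmk-1,j})) = e_j$ by the same argument applied with $\underline{s}$ in place of $\underline{a}_{r,j}$. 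Once this is in place the rest is formal, which is presumably why the authors call it "easy" and omit it.
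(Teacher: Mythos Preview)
The paper omits the proof as ``easy,'' so there is nothing to compare against; I will just assess correctness. Your overall strategy for (i)---transport through the congruence \eqref{e3} to the honest $\nu_{\mathbf{m},i}$-ordering and use the defining minimality---is the right one, and your monotonicity claim $\nu_i(\Delta_{\mathbf{m}}(\underline{a}_{0,i},\ldots,\underline{a}_{r,i}))\le e_i$ for all $r<\lmk$ is true. A clean justification, sharper than what you wrote, is: $p_r\in \mathrm{Int}_{\mathbf{m},k}(\underline{S},R_{P_i})$, so by Proposition~\ref{Int_basis} its coefficients in the basis $\{F_{\mathbf{m},s}\}$ lie in $R_{P_i}$; the top coefficient is $\Delta_r/\Delta_{r-1}$, hence $\nu_i(\Delta_r)\ge\nu_i(\Delta_{r-1})$, and telescoping gives $\nu_i(\Delta_r)\le\nu_i(\Delta_{\lmk-1})=e_i$.

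There is, however, a genuine slip in your bookkeeping paragraph: the assertion that $\nu_i(B_r(\underline{s}))\le e_i$ for all $\underline{s}\in\underline{S}$ is false (take $\underline{s}=\underline{a}_{0,i}$: the determinant has a repeated row and the valuation is $+\infty$). Fortunately you do not need it. From $B_j(\underline{s})\equiv \Delta_{\mathbf{m}}(\underline{a}_{0,i},\ldots,\underline{a}_{j-1,i},\underline{s})\pmod{P_i^{e_i+1}}$ you only get $\nu_i(B_j(\underline{s}))\ge \min\bigl(\nu_i(\Delta_{\mathbf{m}}(\underline{a}_{0,i},\ldots,\underline{a}_{j-1,i},\underline{s})),\,e_i+1\bigr)$; but since you have already shown $\nu_i(B_j(\underline{a}_j))=\nu_i(\Delta_{\mathbf{m}}(\underline{a}_{0,i},\ldots,\underline{a}_{j,i}))\le e_i$ and the ordering inequality bounds this below $\nu_i(\Delta_{\mathbf{m}}(\underline{a}_{0,i},\ldots,\underline{a}_{j-1,i},\underline{s}))$, either branch of the $\min$ gives what you want. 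Finally, for (ii) you are working much too hard: $B_j(\underline{a}_m)=\Delta_{\mathbf{m}}(\underline{a}_0,\ldots,\underline{a}_{j-1},\underline{a}_m)$ already has the row $(p_t(\underline{a}_m))_t$ repeated (once as row $m$, once as the last row), so it vanishes outright---no localization, no congruences.
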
	

 \begin{lemma}{\label{l2}} Let $P_i \in \mathbb{P}$ with $P_i^e = 
w_P(d(\underline{S}, f))$, then $P_i^e$ divides 
$f(\underline{a}_j)$ for all $0\leq j < \lmk$. 
 \end{lemma}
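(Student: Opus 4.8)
The plan is to localize at $P_i$ and reduce the assertion to the single numerical inequality $e\le e_i$, after which a one‑line congruence argument closes the proof.

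First I would pass to $V:=R_{P_i}$, a DVR with valuation $\nu:=\nu_i$ and uniformizing parameter $\pi$, so that $e=\nu(d(\underline{S},f))$. Fixing $j$ with $0\le j<\lmk$, I would write
\[
f(\underline{a}_j)=f(\underline{a}_{j,i})+\bigl(f(\underline{a}_j)-f(\underline{a}_{j,i})\bigr).
\]
Since $f\in R[\underline{x}]$ and $\underline{a}_j\equiv \underline{a}_{j,i}\ \mathrm{mod}\ P_i^{e_i+1}$ by \eqref{e3}, the bracketed term lies in $P_i^{e_i+1}$, hence $\nu\bigl(f(\underline{a}_j)-f(\underline{a}_{j,i})\bigr)\ge e_i+1$. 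As $\underline{a}_{j,i}$ is a term of a $\nu_{\mathbf{m},i}$-ordering of $\underline{S}$ it lies in $\underline{S}$, so $f(\underline{a}_{j,i})\in d(\underline{S},f)$ and $\nu\bigl(f(\underline{a}_{j,i})\bigr)\ge e$. Therefore $\nu\bigl(f(\underline{a}_j)\bigr)\ge \min(e,\,e_i+1)$, which is $e$ as soon as $e\le e_i$; so it remains to establish that inequality.

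For $e\le e_i$ I would argue as follows. By Theorem \ref{Bound for FD for polynomial of type (m,k)}, $d(\underline{S},f)$ divides $\Gamma_{\mathbf{m},k}(\underline{S})$, so $e\le\nu\bigl(\Gamma_{\mathbf{m},k}(\underline{S})\bigr)$, and it suffices to prove $\nu\bigl(\Gamma_{\mathbf{m},k}(\underline{S})\bigr)\le e_i$. Consider $N:=V[\underline{x}]\cap K^{\lmk}[\underline{x}]=\langle p_0,\ldots,p_{\lmk-1}\rangle_V$ — the $V$-span of the first $\lmk$ monomials, which form the monomial basis of $K^{\lmk}[\underline{x}]$ — as a full-rank $V$-submodule of $\mathrm{Int}_{\mathbf{m},k}(\underline{S},V)=\langle F_{\mathbf{m},0},\ldots,F_{\mathbf{m},\lmk-1}\rangle_V$, where the latter description comes from Proposition \ref{Int_basis} applied to the $\nu_{\mathbf{m},i}$-ordering $\{\underline{a}_{t,i}\}_t$. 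By the triangular expansion in the proof of Proposition \ref{Int_basis}, the matrix expressing the $F_{\mathbf{m},r}$ in the monomials $\{p_r\}$ is lower triangular with diagonal entries $\Delta_{\mathbf{m}}(\underline{a}_{0,i},\ldots,\underline{a}_{r-1,i})/\Delta_{\mathbf{m}}(\underline{a}_{0,i},\ldots,\underline{a}_{r,i})$, whose product telescopes; hence the inverse change of basis, which writes each $p_r$ in the $F$-basis, has entries in $V$ (as $p_r\in N\subseteq\mathrm{Int}_{\mathbf{m},k}(\underline{S},V)$) and determinant $\Delta_{\mathbf{m}}(\underline{a}_{0,i},\ldots,\underline{a}_{\lmk-1,i})$. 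Thus the torsion module $M:=\mathrm{Int}_{\mathbf{m},k}(\underline{S},V)/N$ has length $\nu\bigl(\Delta_{\mathbf{m}}(\underline{a}_{0,i},\ldots,\underline{a}_{\lmk-1,i})\bigr)=e_i$. Finally, multiplication by $a\in V$ does not increase the degree or total degree of a polynomial, so $a\,\mathrm{Int}_{\mathbf{m},k}(\underline{S},V)\subseteq V[\underline{x}]$ iff $aM=0$; hence $\Gamma_{\mathbf{m},k}(\underline{S})$ localized at $P_i$ is $\mathrm{Ann}_V(M)$, whose valuation is the top invariant factor exponent of $M$ and so at most $\mathrm{length}(M)=e_i$. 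This gives $e\le e_i$ and finishes the proof.

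I expect the only real obstacle to be the estimate $\nu\bigl(\Gamma_{\mathbf{m},k}(\underline{S})\bigr)\le e_i$; the rest is routine congruence bookkeeping. Its substance is the identification of the index $\bigl[\,\mathrm{Int}_{\mathbf{m},k}(\underline{S},V):V[\underline{x}]\cap K^{\lmk}[\underline{x}]\,\bigr]$ with the full determinant $\Delta_{\mathbf{m}}(\underline{a}_{0,i},\ldots,\underline{a}_{\lmk-1,i})$, after which $\Gamma_{\mathbf{m},k}(\underline{S})$ — being the conductor of $V[\underline{x}]$ inside $\mathrm{Int}_{\mathbf{m},k}(\underline{S},V)$ — automatically has exponent at most that index. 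One could instead read the inequality directly off Corollary \ref{local definition of gamma} together with the telescoping identity $\Delta_{\mathbf{m}}(\underline{a}_{0,i},\ldots,\underline{a}_{\lmk-1,i})=\prod_{r}\Delta_{\mathbf{m}}(\underline{a}_{0,i},\ldots,\underline{a}_{r,i})/\Delta_{\mathbf{m}}(\underline{a}_{0,i},\ldots,\underline{a}_{r-1,i})$, but the module-theoretic route is cleaner; it is also where the standing hypothesis $I(\underline{S})=\{0\}$ enters, as that is what keeps every $\Delta_{\mathbf{m}}(\cdots)$ nonzero and the polynomials $F_{\mathbf{m},r}$ well-defined.
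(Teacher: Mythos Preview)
Your proof is correct. The paper actually omits the proof of this lemma, declaring it ``easy,'' so there is no detailed argument to compare against; the intended reasoning is evidently the congruence step you carry out in your first paragraph: $\underline{a}_j\equiv\underline{a}_{j,i}\bmod P_i^{e_i+1}$ forces $f(\underline{a}_j)\equiv f(\underline{a}_{j,i})\bmod P_i^{e_i+1}$, and $\underline{a}_{j,i}\in\underline{S}$ gives $P_i^e\mid f(\underline{a}_{j,i})$.

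What you correctly isolate, and what the paper leaves implicit, is that this only yields $P_i^e\mid f(\underline{a}_j)$ once one knows $e\le e_i$. Your module-theoretic justification is sound: identifying $\Gamma_{\mathbf{m},k}(\underline{S})_{P_i}$ with $\mathrm{Ann}_V\bigl(\mathrm{Int}_{\mathbf{m},k}(\underline{S},V)/N\bigr)$, computing the length of that quotient as $\nu_i\bigl(\Delta_{\mathbf{m}}(\underline{a}_{0,i},\ldots,\underline{a}_{\lmk-1,i})\bigr)=e_i$ via the telescoping determinant, and then bounding the annihilator exponent by the length. Combined with $e\le\nu_i(\Gamma_{\mathbf{m},k}(\underline{S}))$ from Theorem~\ref{Bound for FD for polynomial of type (m,k)} (here using that $f$ is primitive, which is part of the standing hypotheses in that section), this closes the gap. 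The authors presumably regarded $e\le e_i$ as evident from the choice of $e_i$ together with Corollary~\ref{local definition of gamma}, but your argument makes the point cleanly and without appealing to any monotonicity of the sequence $\nu_i(\Delta_{\mathbf{m}}(\underline{a}_{0,i},\ldots,\underline{a}_{r,i}))$ in $r$, which is not obvious in the multivariate setting.
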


Let $\mathbb{T}$ be a finite set of non-zero prime ideals of $R$. For a  given ideal $I \subset R$ define
\begin{equation*}
I_{\mathbb{T}} = \prod_{P \in \mathbb{T}} w_P(I).
\end{equation*}
 For example, let $R=\mathbb{Z}$ and $\mathbb{T}=\{2\mathbb{Z},3\mathbb{Z}\}$ then $2^2 3^2 
5^37
\mathbb{Z}_{\mathbb{T}}=2^2 3^2\mathbb{Z}$. 
 \medskip

Now, we give a formula for the fixed divisor in general setting.
\begin{pro}\label{t4} Let $f(\underline{x})$ be a primitive polynomial of 
type $(\mathbf{m},k)$, then there exist $b_0, b_1, 
\ldots, b_{\lmk-1}$ in $K$ such 
that 
\begin{equation}\label{dsf2}
d(\underline{S}, f)=(b_0, b_1\Delta_{\mathbf{m}}(\underline{a}_0, 
\underline{a}_1), \ldots, b_{\lmk-1}\Delta_{\mathbf{m}}(\underline{a}_0, 
\underline{ 
a}_1,\ldots,\underline{a}_{\lmk-1}))_{\mathbb{P}}.
\end{equation}
\end{pro}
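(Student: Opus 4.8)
The plan is to reduce the statement to the local situation and then imitate the argument used in Theorem \ref{t2}, with the $B_{\mathbf{i}}$'s replaced by the polynomials $B_j(\underline{x}) = \Delta_{\mathbf{m}}(\underline{a}_0,\ldots,\underline{a}_{j-1},\underline{x})$. First I would expand $f$ in the $K$-basis $\{B_j\}_{0 \le j < \lmk}$ of $K^{\lmk}[\underline{x}]$; since $B_0 = 1$ and the change-of-basis matrix between $\{B_j\}$ and the monomials $\{p_j\}$ is triangular with nonzero diagonal, such coefficients $b_0,\ldots,b_{\lmk-1} \in K$ exist and are uniquely determined by $f$. Note $B_j(\underline{a}_m) = 0$ for $m < j$ by Lemma \ref{l1}(ii), so evaluating the expansion at $\underline{a}_j$ gives a lower-triangular system; in particular $f(\underline{a}_0) = b_0$.

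The containment $d(\underline{S},f) \supseteq \mathrm{RHS}$: I would show that for each $P_i \in \mathbb{P}$, $\nu_i$ of every generator on the right is at least $\nu_i(d(\underline{S},f))$. By Lemma \ref{l2}, $P_i^{e}$ divides $f(\underline{a}_j)$ for all $j$ where $P_i^e = w_{P_i}(d(\underline{S},f))$; then an induction on $j$ using the triangular evaluation system $f(\underline{a}_j) = \sum_{m \le j} b_m B_m(\underline{a}_j)$ together with Lemma \ref{l1}(i) (which controls $\nu_i(B_m(\underline{s}))$ from below by $\nu_i(B_m(\underline{a}_m))$) shows that $P_i^e$ divides $b_j \Delta_{\mathbf{m}}(\underline{a}_0,\ldots,\underline{a}_j)$ for every $j$. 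This is the analogue of the induction in Theorem \ref{t2}. The reverse containment $d(\underline{S},f) \subseteq \mathrm{RHS}$: for any $\underline{s} \in \underline{S}$, expand $f(\underline{s}) = \sum_j b_j B_j(\underline{s})$; by Lemma \ref{l1}(i) each $\nu_i(b_j B_j(\underline{s})) \ge \nu_i(b_j B_j(\underline{a}_j)) = \nu_i(b_j \Delta_{\mathbf{m}}(\underline{a}_0,\ldots,\underline{a}_j))$, so the ideal on the right divides $f(\underline{s})$, hence divides $d(\underline{S},f)$. Taking the $\mathbb{P}$-part throughout accounts for the subscript $\mathbb{P}$ and discards primes not dividing $\Gamma_{\mathbf{m},k}(\underline{S})$, which play no role since $d(\underline{S},f)$ divides $\Gamma_{\mathbf{m},k}(\underline{S})$ by Theorem \ref{Bound for FD for polynomial of type (m,k)}.

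The main obstacle I anticipate is bookkeeping the role of the congruences \eqref{e3} and the exponents $e_j$: one must check that the globally chosen sequence $\{\underline{a}_i\}$, which is only congruent to a genuine $\nu_j$-ordering modulo $P_j^{e_j+1}$, still satisfies Lemma \ref{l1}(i) at every prime $P_i \in \mathbb{P}$ simultaneously — i.e. that raising the modulus past the relevant valuation makes $\nu_i(\Delta_{\mathbf{m}}(\ldots,\underline{s}))$ agree with what a true $\nu_i$-ordering would give. This is exactly the point of choosing $e_j = \nu_j(\Delta_{\mathbf{m}}(\underline{a}_{0,j},\ldots,\underline{a}_{\lmk-1,j}))$, and it is the content of the (omitted) proofs of Lemmas \ref{l1} and \ref{l2}, so in the proof of Proposition \ref{t4} itself I would simply invoke those lemmas. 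A secondary subtlety is the unimodularity remark needed to pass between the coefficients $b_j$ and primitivity of $f$ (as in Theorem \ref{t2}), but this is not needed for \eqref{dsf2} as stated — only for the sharpness-type consequences — so I would not dwell on it here.
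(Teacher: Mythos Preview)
Your proposal is correct and follows essentially the same route as the paper: expand $f$ in the basis $\{B_j\}$, use Lemma~\ref{l2} together with the triangular evaluation $f(\underline{a}_j)=\sum_{m\le j} b_m B_m(\underline{a}_j)$ and induction for one containment, and Lemma~\ref{l1}(i) applied to the expansion at an arbitrary $\underline{s}\in\underline{S}$ for the other. Your remarks on the role of the subscript $\mathbb{P}$ (via Theorem~\ref{Bound for FD for polynomial of type (m,k)}) and on why the congruences \eqref{e3} suffice are accurate elaborations of points the paper leaves implicit.
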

 \begin{proof} Clearly, there exist  $b_0, b_1, \ldots, b_{\lmk-1}$ in 
$K$ 
such that 
\begin{equation}\label{expansion}
f(\underline{x})=\sum_{0 \leq i < \lmk} b_i B_i(\underline{x}).
\end{equation}

 Let 
$P$ be a prime dividing $d(\underline{S}, f)$ and $P^e = w_P(d(\underline{S}, 
f))$.
Then by Lemma \ref{l2}, 
$P^e$ must divide $f(\underline{a}_i)$ for $0\leq i < \lmk$. 
By substituting $\underline{x}= \underline{a}_i$ in \eqref{expansion} 
inductively, we 
see that $P^e$ divides each fractional ideal generated by
 $b_i 
\Delta_{\mathbf{m}}(\underline{a}_0, \underline{ a}_1, \underline{a}_2,\ldots, 
\underline{a}_i)$ and so divides the right side of \eqref{dsf2}.
 
 Conversely, if $P \in \mathbb{P}$ be any ideal such that $P^e$ divides each 
fractional ideal
$\sloppy{b_i \Delta_{\mathbf{m}}(\underline{a}_0, \underline{ a}_1, 
\underline{a}_2,\ldots, 
\underline{a}_i)}$ for $0\leq i < \lmk$, then $P^e$ divides 
$f(\underline{s})$ for all $\underline{s} \in \underline{S}$ by 
\eqref{expansion} and Lemma \ref{l1}. This completes the proof of the theorem.
 \end{proof}
 
  Note that the matrix which transforms the coefficients $c_i$ in the usual 
representation $f(\underline{x})=\sum_{0\leq i<\lmk} c_i 
p_i(\underline{x}) $ to the coefficients $b_i$ in \eqref{expansion} can be 
computed  by first expanding $B_i(\underline{x})$ into 
monomials and then finding the inverse of the appropriate matrix.

\medskip
The following result  is the converse of Theorem \ref{Bound for FD for polynomial 
of type (m,k)}.
 
\begin{pro}\label{t5}
Let $I$ be any divisor of $\Gamma_{\mathbf{m}, k}(\underline{S})$, then there 
exists a primitive 
polynomial $f$ of type $(\mathbf{m},k)$ and degree  such that $d(\underline{S}, 
f)=I$.
\end{pro}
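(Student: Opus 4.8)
The plan is to realize $I$ as a fixed divisor by building $f$ one prime at a time and gluing. Since $I$ divides $\Gamma_{\mathbf{m},k}(\underline{S})$, every prime of $I$ lies in $\mathbb{P}=\{P_1,\dots,P_r\}$; writing $e'_i=\nu_i(\Gamma_{\mathbf{m},k}(\underline{S}))$, we have $I=\prod_{i=1}^r P_i^{t_i}$ with $0\le t_i\le e'_i$. (Throughout I assume the type $(\mathbf{m},k)$ is admissible, i.e. $\max_j m_j\le k\le\vert\mathbf{m}\vert$, so that polynomials of that type exist at all.) It then suffices to produce a polynomial $f$ of type $(\mathbf{m},k)$ that is primitive at each $P_i\in\mathbb{P}$ and satisfies $\nu_i(d(\underline{S},f))=t_i$ for every $i$: its primitive part $f'$ has the same degrees, the same $\nu_i$'s (the content of $f$ being coprime to $\mathbb{P}$), and, by Theorem~\ref{Bound for FD for polynomial of type (m,k)}, $d(\underline{S},f')$ divides $\Gamma_{\mathbf{m},k}(\underline{S})$, whose only primes are exactly those in $\mathbb{P}$; hence $d(\underline{S},f')=\prod_i P_i^{t_i}=I$, and $f'$ is the desired polynomial.

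For the local construction, fix $P_i$ and work in the DVR $V=R_{P_i}$ with valuation $\nu_i$ and uniformizer $\pi_i$. Take a $\nu_{\mathbf{m}}$-ordering $\{\underline{a}_j\}$ of $\underline{S}$ with associated basis $\{F_{\mathbf{m},r}\}_{0\le r<\lmk}$ of $\mathrm{Int}_{\mathbf{m},k}(\underline{S},V)$ (Proposition~\ref{Int_basis}), and by Corollary~\ref{local definition of gamma} pick $r^*$ with $-\nu_i(F_{\mathbf{m},r^*})=e'_i$; fix $\gamma\in V$ with $\nu_i(\gamma)=e'_i-t_i$, and let $e(\underline{x})\in R[\underline{x}]$ be a fixed primitive polynomial of type exactly $(\mathbf{m},k)$ (such as $\sum_j x_j^{m_j}$, augmented by one monomial of total degree $k$ if necessary). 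The claim is that, for $N_i$ large,
$$g_i(\underline{x})=\pi_i^{t_i}\bigl(1+\gamma\,F_{\mathbf{m},r^*}(\underline{x})\bigr)+\pi_i^{N_i}\,e(\underline{x})$$
(with $\gamma=0$ when $t_i=0$) works locally: (i) $g_i$ has type $(\mathbf{m},k)$, and the coefficients of $g_i$ at the monomials $x_j^{m_j}$ and at the distinguished top monomial have finite, explicitly bounded $\nu_i$; (ii) $g_i$ is primitive over $V$, since $\pi_i^{t_i}\gamma F_{\mathbf{m},r^*}$ has content $t_i+(e'_i-t_i)+(-e'_i)=0$ while every other summand has positive valuation; (iii) $g_i/\pi_i^{t_i}=1+\gamma F_{\mathbf{m},r^*}+\pi_i^{N_i-t_i}e$ lies in $\mathrm{Int}_{\mathbf{m},k}(\underline{S},V)$, so $\nu_i(d(\underline{S},g_i))\ge t_i$; and (iv) $F_{\mathbf{m},r^*}(\underline{a}_0)=0$ gives $g_i(\underline{a}_0)=\pi_i^{t_i}+\pi_i^{N_i}e(\underline{a}_0)$, of valuation exactly $t_i$, whence $\nu_i(d(\underline{S},g_i))=t_i$. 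The only care needed is to choose $N_i$ avoiding the finitely many values at which the two contributions to a top coefficient could cancel, and then a cut-off $M_i$ larger than $N_i$, $t_i$, and all those coefficient valuations.

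To glue, use the Chinese Remainder Theorem for the pairwise coprime ideals $P_i^{M_i}$ of the Dedekind domain $R$: choose $f\in R[\underline{x}]$, supported only on the monomials admitted by type $(\mathbf{m},k)$, whose coefficients reduce modulo $P_i^{M_i}$ to those of $g_i$ for each $i$ (this makes sense since $R/P_i^{M_i}\cong R_{P_i}/P_i^{M_i}R_{P_i}$). Because $g(\underline{s})$ for $\underline{s}\in\underline{S}$ and the content of $g$ at $P_i$ depend only on $g\bmod P_i^{M_i}$ once $M_i>t_i$, we get $\nu_i(d(\underline{S},f))=t_i$ and $f$ primitive at each $P_i$; and because the distinguished top coefficients of each $g_i$ have $\nu_i<M_i$, they persist in $f$, so $\deg f=\mathbf{m}$ and $\tdeg f=k$. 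Passing to the primitive part finishes the proof as in the first paragraph. (Alternatively, one could extract the $b_i$ in formula~\eqref{dsf2} of Proposition~\ref{t4} with prescribed valuations, but the local-then-glue route is more transparent.) The crux of the argument is steps (iii)--(iv) of the local construction: the device $\pi_i^{t_i}(1+\gamma F_{\mathbf{m},r^*})$ is precisely what lets us hit an \emph{arbitrary} intermediate valuation $t_i\le e'_i$ — the sharpness half of Theorem~\ref{Bound for FD for polynomial of type (m,k)} only produces the extreme case $t_i=e'_i$ — while keeping control of primitivity; maintaining the exact type $(\mathbf{m},k)$ through the $\pi_i^{N_i}e$ correction and the CRT gluing is then routine bookkeeping with $N_i$ and $M_i$.
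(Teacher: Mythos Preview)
Your argument is correct, but the paper takes a much shorter route. Rather than building local polynomials $g_i$ from scratch and gluing, the paper simply invokes the sharpness in Theorem~\ref{Bound for FD for polynomial of type (m,k)} to obtain a single primitive $g\in R[\underline{x}]$ of type $(\mathbf{m},k)$ with $d(\underline{S},g)=\Gamma_{\mathbf{m},k}(\underline{S})$, and then sets $f=b+g$ for a well-chosen constant $b\in R$. The only work is to pick $b$ so that $w_{P_i}(\langle b\rangle)=w_{P_i}(I)$ for each $P_i\in\mathbb{P}$ and so that every prime $Q_j$ dividing the non-constant coefficients $(c_1,\dots,c_{\lmk-1})$ of $g$ also divides $b$; one checks directly that $f$ is then primitive and that $d(\underline{S},f)=I$ (each $\nu_i(g(\underline{s}))\ge \nu_i(\Gamma)\ge \nu_i(b)$, so adding $b$ drags the fixed divisor down to $I$). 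Thus the whole proof is three lines once one has the extremal $g$.

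What each approach buys: the paper's argument is far more economical --- no local bases, no $\pi_i^{N_i}e$ corrections, no coefficient-wise CRT, and the type $(\mathbf{m},k)$ is inherited from $g$ for free since only the constant term changes. Your construction, on the other hand, is more self-contained (it does not presuppose the existence of the extremal $g$) and the device $\pi_i^{t_i}(1+\gamma F_{\mathbf{m},r^*})$ gives explicit local control that would be useful if one wanted, say, to prescribe additional local data; but for the bare statement of Proposition~\ref{t5} it is considerably more machinery than needed.
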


\begin{proof} By Theorem \ref{Bound for FD for polynomial of type (m,k)}, there 
exists a primitive polynomial $g$ of type
 $(\mathbf{m},k)$ with
$g(\underline{x})= \sum\limits_{0\leq i<\lmk} c_i p_i(\underline{x})$   such that $d(\underline{S}, g)= \Gamma_{\mathbf{m}, 
k}(\underline{S})$. 
Recall that the set of primes dividing 
$\Gamma_{\mathbf{m}, k}(\underline{S})$ is $\mathbb{P}=\{P_1,P_2,\ldots,P_r\}$. Let $\{Q_1,Q_2,\ldots,Q_s\}$ be the set 
of primes dividing 
$(c_1,c_2,\ldots,c_{l-1})$. We note that these two sets have no intersection. 
For, if not, let $Q_i \in \mathbb{P}$, then $Q_i$ divides $g(\underline{s}) 
\equiv c_0 \ \mathrm{ mod }\ Q_i$ for some $\underline{s} \in \underline{S}$. 
This means that $Q_i$ divides $c_0$ and $g$ is not primitive, which is a 
contradiction.

 Choose $b \in R$ such that $w_{P_i}(\langle b 
\rangle)=w_{P_i}(I)$ for all $i=1,2,\ldots,r$ and $Q_i \ | \ b$ for 
$i=1,2,\ldots,s$.

Consider the polynomial $f=b+g$. Clearly, $d(\underline{S}, f)=I$ and $f$ is 
primitive.
\end{proof}

 The following result is the analogue of Hensel's result (Theorem~\ref{Hensel}).
\begin{theorem}\label{t6}
Let $f$ be a polynomial of type $(\mathbf{m},k)$ 
and $\underline{a} \in \underline{S}$ be such that $f(\underline{a})\neq 0$. 
Then there exist elements $\underline{a}_1, \ldots, \underline{a}_{\lmk-1}$ in 
$R^n$ 
such that $d(\underline{S},f)$ is given by $$ 
d(\underline{S},f)=(f(\underline{a}),f(\underline{a}_1),\ldots,f(\underline{a}_{
\lmk-1})).$$
\end{theorem}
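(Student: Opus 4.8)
The plan is to reduce to a prime-by-prime analysis using the already-established machinery of $\nu_{\mathbf{m}}$-orderings and Proposition \ref{t4}. First I would dispense with the primitivity issue: if $f$ has content $c$, write $f = c f_0$ with $f_0$ primitive of type $(\mathbf{m},k)$; then $d(\underline{S},f) = (c)\, d(\underline{S},f_0)$ and $f(\underline{b}) = c f_0(\underline{b})$ for all $\underline{b}$, so it suffices to prove the statement for $f_0$. Hence assume $f$ primitive. By Proposition \ref{t4}, with $\mathbb{P} = \{P_1,\ldots,P_r\}$ the primes dividing $\Gamma_{\mathbf{m},k}(\underline{S})$ and $\{\underline{a}_i\}_{i=0}^{\lmk-1}$ the ``simultaneous'' sequence satisfying the congruences \eqref{e3}, we have
\begin{equation*}
d(\underline{S},f) = \bigl(b_0,\, b_1 B_1(\underline{a}_1),\, \ldots,\, b_{\lmk-1} B_{\lmk-1}(\underline{a}_{\lmk-1})\bigr)_{\mathbb{P}},
\end{equation*}
where $f(\underline{x}) = \sum_{0\le i<\lmk} b_i B_i(\underline{x})$ and $B_i(\underline{x}) = \Delta_{\mathbf{m}}(\underline{a}_0,\ldots,\underline{a}_{i-1},\underline{x})$. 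The key point, already contained in the proof of Proposition \ref{t4} and in Lemma \ref{l1}, is that for each $P_j \in \mathbb{P}$ the valuation $\nu_j(f(\underline{a}_i))$ captures precisely $\min\{\nu_j(b_\ell B_\ell(\underline{a}_\ell)) : 0 \le \ell \le i\}$ in an inductive sense — substituting $\underline{x}=\underline{a}_i$ kills all terms $B_\ell$ with $\ell > i$ (Lemma \ref{l1}(ii)) and $B_\ell(\underline{a}_\ell)$ has minimal $\nu_j$-valuation among $\{B_\ell(\underline{s}):\underline{s}\in\underline{S}\}$ (Lemma \ref{l1}(i)).

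The substantive task is then to show that the single point $\underline{a}$ together with \emph{some} choice of the points $\underline{a}_1,\ldots,\underline{a}_{\lmk-1}$ recovers every $P_j$-part of $d(\underline{S},f)$, not just those primes in $\mathbb{P}$. The reason we need $\underline{a}$ at all is that $d(\underline{S},f)$ may have prime factors \emph{outside} $\mathbb{P}$: indeed Theorem \ref{Bound for FD for polynomial of type (m,k)} says $d(\underline{S},f)$ divides $\Gamma_{\mathbf{m},k}(\underline{S})$, so in fact every prime of $d(\underline{S},f)$ lies in $\mathbb{P}$ — wait, this needs care. Since $d(\underline{S},f) \mid \Gamma_{\mathbf{m},k}(\underline{S})$, the primes dividing $d(\underline{S},f)$ are all in $\mathbb{P}$, so the $\mathbb{P}$-truncation in \eqref{dsf2} is harmless for $d(\underline{S},f)$ itself; the truncation is only there because the \emph{individual} generators $b_i B_i(\underline{a}_i)$ (being fractional ideals) might involve other primes. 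So the real issue is: the generators $f(\underline{a}_j)$ are genuine elements of $R$ (not fractional ideals), and I must arrange that collectively they have the same $P_j$-valuation as $d(\underline{S},f)$ for each $j$, \emph{and} that their gcd does not accidentally pick up spurious common factors forcing the ideal too small — but that cannot happen since each $f(\underline{a}_j) \in d(\underline{S},f)$ by definition of the fixed divisor, so $(f(\underline{a}),f(\underline{a}_1),\ldots) \subseteq d(\underline{S},f)$ automatically. Thus only one containment is in question: $d(\underline{S},f) \subseteq (f(\underline{a}),f(\underline{a}_1),\ldots,f(\underline{a}_{\lmk-1}))$, i.e. for each prime $P_j \in \mathbb{P}$ I need $\min_i \nu_j(f(\underline{a}_i)) \le \nu_j(d(\underline{S},f))$, with $\underline{a}_0$ replaced by the prescribed $\underline{a}$.

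For that, I would take $\underline{a}_1,\ldots,\underline{a}_{\lmk-1}$ exactly as in the setup of Proposition \ref{t4} (the simultaneous congruence sequence), but now also demand that $\underline{a}_0 := \underline{a}$ be extended compatibly: since $f(\underline{a})\neq 0$, the value $\nu_j(f(\underline{a}))$ is finite for every $j$, and one runs the same inductive argument — at stage $i$, $f(\underline{a}_i) = \sum_{\ell \le i} b_\ell B_\ell(\underline{a}_i)$, and by the minimality property of the $\nu_{\mathbf{m}}$-ordering points (Lemma \ref{l1}, Proposition \ref{prop_B}) one peels off the generators one at a time so that $\gcd_j(f(\underline{a}_0),\ldots,f(\underline{a}_{\lmk-1}))$ hits valuation $\nu_j(b_i B_i(\underline{a}_i))$ for each $i$ that is relevant, hence equals $\nu_j(d(\underline{S},f))$ by \eqref{dsf2}. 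The one place requiring genuine attention is that replacing $\underline{a}_0$ by an \emph{arbitrary} $\underline{a}$ with $f(\underline{a})\neq0$ breaks the $\nu_{\mathbf{m}}$-ordering property at index $0$: $B_1(\underline{x}) = \Delta_{\mathbf{m}}(\underline{a},\underline{x})$ need no longer have minimal valuation. The fix is to note that we are free to re-choose $\underline{a}_1,\ldots,\underline{a}_{\lmk-1}$ \emph{after} fixing $\underline{a}_0=\underline{a}$, by running a $\nu_{\mathbf{m}}$-ordering of $\underline{S}$ \emph{that starts at $\underline{a}$} (such an ordering exists for any prescribed first element, since the $\nu_{\mathbf{m}}$-ordering conditions for $r\ge 1$ only constrain $\underline{a}_r$ given $\underline{a}_0,\ldots,\underline{a}_{r-1}$), then imposing the simultaneous congruences \eqref{e3} across all $P_j\in\mathbb{P}$. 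The main obstacle, and the step I would write out most carefully, is verifying that this re-started ordering still yields $\nu_j(B_i(\underline{a}_i)) = \nu_j(\Gamma_{\mathbf{i},|\mathbf{i}|}(\underline{S}))$-type minimality so that Proposition \ref{t4}'s generator identity remains valid with $\underline{a}_0 = \underline{a}$ — everything else is the same inductive peeling already carried out in the proof of Proposition \ref{t4}.
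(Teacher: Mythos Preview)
Your proposal has a genuine gap stemming from the choice of prime set. You work with $\mathbb{P}$, the primes dividing $\Gamma_{\mathbf{m},k}(\underline{S})$, whereas the paper works with the primes dividing $\langle f(\underline{a})\rangle$. This difference is not cosmetic.

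First, the containment you call ``automatic by definition of the fixed divisor'' is not: the points $\underline{a}_1,\ldots,\underline{a}_{\lmk-1}$ produced by CRT lie in $R^n$, not in $\underline{S}$, so $f(\underline{a}_j)\in d(\underline{S},f)$ does \emph{not} follow from the definition. (It does hold for primes in $\mathbb{P}$ via the argument of Lemma~\ref{l2}, but that is a lemma, not a tautology.) More seriously, you have the direction of the danger backwards: spurious common prime factors among the $f(\underline{a}_j)$ make the ideal $(f(\underline{a}),f(\underline{a}_1),\ldots)$ \emph{strictly smaller} than $d(\underline{S},f)$, i.e.\ they obstruct exactly the containment $d(\underline{S},f)\subseteq(f(\underline{a}),\ldots)$ that you say is the one ``in question'' --- and your construction does not prevent this. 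Concretely, take $R=\Z$, $\underline{S}=\Z$, $f(x)=x^2-x$, $\underline{a}=4$. Then $d(\Z,f)=(2)$ and $\Gamma_{2,2}(\Z)=(2)$, so $\mathbb{P}=\{2\}$. Your CRT imposes congruences only modulo a power of $2$; a legitimate choice such as $\underline{a}_1=9$, $\underline{a}_2=10$ (congruent to the $\nu_2$-ordering $4,5,6$ modulo $4$) gives $(f(4),f(9),f(10))=(12,72,90)=(6)\neq(2)$. The spurious factor $3$ appears because $3\mid f(\underline{a})$ but $3\notin\mathbb{P}$, so nothing in your scheme controls the values modulo $3$.

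The paper's remedy is precisely to take the prime set to be the primes of $\langle f(\underline{a})\rangle$ (this is where the hypothesis $f(\underline{a})\neq 0$ is actually used): for any prime $Q\nmid f(\underline{a})$ the generator $f(\underline{a})$ is already a $Q$-unit, so no spurious $Q$-factor can survive; for each $P_i\mid f(\underline{a})$ with $P_i^{e_i}=w_{P_i}(f(\underline{a}))$, congruences modulo $P_i^{e_i+1}$ force $f(\underline{a}_j)\equiv f(\text{an element of }\underline{S})\pmod{P_i^{e_i+1}}$, which suffices since $w_{P_i}(d(\underline{S},f))\mid P_i^{e_i}$. Your concern about whether a $\nu_{\mathbf{m}}$-ordering can be restarted at an arbitrary $\underline{a}$ is a non-issue (the defining minimality condition at index $r\ge 1$ constrains only $\underline{a}_r$ given the earlier terms), and the reduction to primitive $f$ is harmless but unnecessary; the real obstacle is the prime set, and once you switch to the primes of $\langle f(\underline{a})\rangle$ the two-sided inductive argument you sketch goes through exactly as in the paper.
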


\begin{proof}
 Consider the prime factorization $\langle f(\underline{a}) \rangle= 
\prod_{i=0}^r P^{e_i}_i$. Now we construct a sequence $\lbrace \underline{a}_j 
\rbrace _{0\leq j < \lmk}$ which is term-wise congruent to a $\nu_i$-ordering of 
$\underline{S}$ modulo $P^{e_i+1}_i$.   For each $\nu_i$-ordering, 
we put the condition that the first element is $\underline{a}$.  Hence, we can 
assume that $\underline{a}_0 = \underline{a} \in \underline{S}$.

 It can be shown, as in Lemma \ref{l2}, that if $P^{e}$ divides 
$d(\underline{S},f)$, then $P=P_i$ for some $i$ and $P^{e}$ divides 
$f(\underline{a}_j)$ for every $0 \leq j < \lmk$. 

In the other direction, we express $f(\underline{x})=\sum_{0\leq j < \lmk} b_j
B_j(\underline{x})$ (these $B_j$ are defined as before in terms of $a_j$).  Now 
if $P^{e}$ divides 
$(f(\underline{a}_0),f(\underline{a}_1),\ldots,f(\underline{a}_{l-1}))$, then 
it 
must divide $b_j B_j(\underline{a}_j)$ for $0\leq j < \lmk$ (by induction) and 
so 
it must divide $f(\underline{s})$ for all $\underline{s} \in \underline{S}$ (as shown 
in Lemma \ref{l1}).
\end{proof}

In the case when $\underline{S}$ is a Cartesian product of subsets of $R$ and 
$f$ a polynomial of degree $\mathbf{m}$, the elements $a_{\mathbf{i}}$ can be 
constructed as in Section \ref{s3} from a $\langle 
f(\underline{a}_0)\rangle$-ordering in each component, starting with arbitrary 
$\underline{a}_0 \in \underline{S}$ such that $f(\underline{a}_0) \neq 0$. Then 
$d(\underline{S},f)$ can be shown to be the g.c.d. of the $f$ images of $\lmk$ 
elements (which is always less than or equal to the bound $(m_1+1)(m_2+1)\cdots (m_n+1)$ given by Theorem
\ref{Hensel}), which might be more 
useful than Theorem \ref{t2} in certain situations.

For example, given all assumptions of Theorem \ref{Hensel} where 
$f \in \mathbb{Z}[\underline{x}]$ is a polynomial with degree $m_i$ in $x_i$ for 
$i=1,2,\ldots , n$. Then 
\begin{equation}\label{Hensel example}
d(\Z^n,f) =\operatorname*{gcd} \lbrace  f(r_1,r_2, \ldots , r_n) : 0 \leq r_i 
\leq m_i \rbrace.
\end{equation}

Further  if  $\tdeg(f)=k$ then by Theorem \ref{t6} we have
\begin{equation}\label{Hensel example improved}
d(\Z^n,f) =\operatorname*{gcd} \lbrace  f(r_1,r_2, \ldots , r_n) : 0 \leq r_i 
\leq m_i, r_1+r_2+ \ldots + r_n \leq k \rbrace.
\end{equation}

From Equations ~\eqref{Hensel example} and  ~\eqref{Hensel example improved},  $d(\Z^n,f)$ can be evaluated by finding the g.c.d. of finite number of images of $f$. Further Equation ~\eqref{Hensel example improved} uses less number of $f$ images than that of Equation ~\eqref{Hensel example}.

It is well known that every ideal in a Dedekind domain is generated by   two elements. The following result shows that for   $d(\underline{S},f)$, those elements can be taken from images of $f$.

\begin{theorem}\label{fixed divisor by two image elements}
Let $f(\underline{x}) \in R[\underline{x}]$ be  a polynomial of type 
$(\mathbf{m},k)$, then for each 
element $\underline{a} \in \underline{S} \subseteq R^n$ such that 
$f(\underline{a}) \neq 0$, there exists an element $\underline{b} \in R^n$ such 
that $d(\underline{S},f)=(f(\underline{a}),f(\underline{b})).$
\end{theorem}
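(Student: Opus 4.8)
The strategy is to start from the finite generating set produced by Theorem~\ref{t6}, namely $d(\underline{S},f) = (f(\underline{a}), f(\underline{a}_1), \ldots, f(\underline{a}_{\lmk-1}))$ with $\underline{a}_0 = \underline{a}$, and then ``merge'' the extra generators $f(\underline{a}_1), \ldots, f(\underline{a}_{\lmk-1})$ into a single value $f(\underline{b})$ by one further application of the Chinese Remainder Theorem, at the expense of enlarging the prime set we must control. Write $\langle f(\underline{a})\rangle = \prod_{i=1}^r P_i^{e_i}$ and let $d(\underline{S},f) = \prod_{i=1}^r P_i^{t_i}$ with $t_i \le e_i$; by Theorem~\ref{Bound for FD for polynomial of type (m,k)} each such prime also divides $\Gamma_{\mathbf{m},k}(\underline{S})$ (the primes dividing $d(\underline{S},f)$ are a subset of those dividing $\Gamma_{\mathbf{m},k}(\underline{S})$, and a priori only finitely many primes are in play). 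The point of including $f(\underline{a})$ as a fixed generator is that for each $P_i$ with $w_{P_i}(d(\underline{S},f)) = t_i$ we already know, from the proof of Theorem~\ref{t6}, that $t_i = \min_{\underline{s}\in\underline{S}} \nu_i(f(\underline{s}))$, and this minimum is attained along the $\nu_i$-ordering $\{\underline{a}_{i,j}\}$; so for each $i$ there is some index $j(i)$ with $\nu_i(f(\underline{a}_{j(i)})) = t_i$.

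The key step is then to build $\underline{b} \in R^n$ by a simultaneous congruence: for each $i = 1, \ldots, r$, require
\[
\underline{b} \equiv \underline{a}_{i,\,j(i)} \pmod{P_i^{e_i+1}}.
\]
Since the $P_i$ are pairwise coprime, CRT gives such a $\underline{b}$ (componentwise, as in the constructions of Sections~\ref{s3} and~\ref{s4}). Because $\underline{a}_{i,j(i)}$ is a term of a $\nu_i$-ordering of $\underline{S}$, the same argument as in Proposition~\ref{Int_basis}/Theorem~\ref{t6} shows $\nu_i(f(\underline{b})) = \nu_i(f(\underline{a}_{i,j(i)})) = t_i$ for every $i$: working in the DVR $R_{P_i}$, expand $f$ in the basis $\{F_{\mathbf{m},j}\}$ associated to that $\nu_i$-ordering, note $\underline{b} \equiv \underline{a}_{i,j(i)}$ modulo a power of $P_i$ exceeding the relevant valuations, and conclude $f(\underline{b}) \equiv f(\underline{a}_{i,j(i)})$ modulo $P_i^{e_i+1}$, which pins down $\nu_i(f(\underline{b}))$ exactly. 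Hence $w_{P_i}(\langle f(\underline{b})\rangle) = t_i$ for all $i$, so $d(\underline{S},f) \mid \langle f(\underline{a})\rangle \cap \langle f(\underline{b})\rangle \mid (f(\underline{a}), f(\underline{b}))$; and conversely $(f(\underline{a}), f(\underline{b})) \subseteq d(\underline{S},f)$ trivially since both values lie in the ideal generated by all images of $f$. Therefore $d(\underline{S},f) = (f(\underline{a}), f(\underline{b}))$.

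The main obstacle is bookkeeping the valuations at primes $P_i$ that divide $\Gamma_{\mathbf{m},k}(\underline{S})$ (or $f(\underline{a})$) but at which $d(\underline{S},f)$ might have a \emph{smaller} exponent than $f(\underline{a})$ does: one must ensure $f(\underline{b})$ has valuation exactly $t_i$ there, not $e_i$, and this is precisely why we congruence $\underline{b}$ to the $\nu_i$-ordering term realizing the minimum rather than to $\underline{a}$ itself. A secondary subtlety is that one should double-check the set of primes to be handled is finite and contained in $\mathbb{P}$; this is immediate from Theorem~\ref{Bound for FD for polynomial of type (m,k)} since $d(\underline{S},f) \mid \Gamma_{\mathbf{m},k}(\underline{S})$. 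Everything else is a routine repetition of the localization-plus-CRT technique already used repeatedly in Sections~\ref{s3} and~\ref{s4}.
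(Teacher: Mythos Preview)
Your approach is essentially the same as the paper's: factor $\langle f(\underline{a})\rangle=\prod P_i^{e_i}$, for each $P_i$ pick a term of a $\nu_i$-ordering at which $f$ has minimal $P_i$-valuation, and glue these choices into a single $\underline{b}$ via CRT modulo sufficiently high powers of the $P_i$. The paper's proof is exactly this, stated more tersely.

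However, your final verification paragraph contains two slips. First, the chain ``$d(\underline{S},f)\mid \langle f(\underline{a})\rangle \cap \langle f(\underline{b})\rangle \mid (f(\underline{a}),f(\underline{b}))$'' is garbled: the second divisibility is backwards (the sum of two ideals always divides their intersection, not the other way), and in any case this chain does not yield the inclusion you want. Second, and more importantly, your ``conversely $(f(\underline{a}),f(\underline{b}))\subseteq d(\underline{S},f)$ trivially since both values lie in the ideal generated by all images of $f$'' is not justified: $\underline{b}$ is only in $R^n$, not in $\underline{S}$, so $f(\underline{b})$ is not a priori an element of $d(\underline{S},f)$. The clean way to finish (and what your own computations already set up) is to argue prime by prime: for $P=P_i$ you have $\nu_i(f(\underline{a}))=e_i$ and $\nu_i(f(\underline{b}))=t_i$, hence $\nu_i\big((f(\underline{a}),f(\underline{b}))\big)=\min(e_i,t_i)=t_i=\nu_i(d(\underline{S},f))$; for $P\notin\{P_1,\ldots,P_r\}$ both sides have valuation $0$ since already $\nu_P(f(\underline{a}))=0$. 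This gives equality of the two ideals directly.

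A minor side remark: your appeals to Theorem~\ref{Bound for FD for polynomial of type (m,k)} to bound the primes by $\mathbb{P}$ require $f$ to be primitive, which the present theorem does not assume. This is harmless, since you never actually use $\mathbb{P}$---the primes you need are exactly those dividing $\langle f(\underline{a})\rangle$, a finite set because $f(\underline{a})\neq 0$---but the parenthetical justification should be dropped.
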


\begin{proof} Let $\underline{a} \in \underline{S} \subseteq R^n$ such that 
$f(\underline{a}) \neq 0$  and $\Pi_{i=0}^rP_i^{e_i}$ be the prime 
factorization of $\langle f(\underline{a})\rangle$. For each prime $P_i$ we find an element 
$b_{i,r_i}$ among first $\lmk-1$ terms 
of $\nu_i$-ordering of $\underline{S}$ such that $f(\underline{b}_{i,r_i})$ is 
divisible by the smallest power of $P_i$. Now we select $\underline{b}$ which is congruent 
to $\underline{b}_{i,r_i}$ modulo a sufficiently high power of  $P_i$ for all $0\le 
i\le r.$ Then it is easy to check that  
$d(\underline{S},f)=(f(\underline{a}),f(\underline{b}))$.

\end{proof}

  In general, $d(\underline{S},f)$ may not be generated by a single $f(a)$ for some $a \in R$. For example, if $f=5x+3$, then $d(\Z,f)= \Z,$ but one cannot find $m \in \Z$ such that $\langle f(m)\rangle=\Z.$

\medskip

The following corollary gives a relation connecting $d(\underline{S}, fg)$, $d(\underline{S}, f)$ and $d(\underline{S}, g)$. Its proof follows from Theorem  \ref{t6}.
 \begin{cor} \label{cor:fixed divisor of product}
  Let $f(\underline{x})$and $g(\underline{x})$ be two primitive polynomials of 
type $(\mathbf{m}_1,k_1)$ and $(\mathbf{m}_2,k_2)$. If 
$\mathbf{m}=\mathbf{m}_1+\mathbf{m}_2$ and $k=k_1+k_2$, then there exist  
elements $\underline{a}_0, 
\underline{a}_1, \ldots, \underline{a}_{\lmk-1}$
in $R^n$ such that 
 $$d(\underline{S}, fg)= (f(\underline{a}_0)g(\underline{a}_0), 
f(\underline{a}_1)g(\underline{a}_1), \ldots, 
f(\underline{a}_{\lmk-1})g(\underline{a}_{\lmk-1})),$$ where $$d(\underline{S}, 
f)= 
(f(\underline{a}_0), f(\underline{a}_1), \ldots, f(\underline{a}_{\lmk-1}))$$ 
and 
$$d(\underline{S}, g)= (g(\underline{a}_0), g(\underline{a}_1), \ldots, 
g(\underline{a}_{\lmk-1})).$$
 \end{cor}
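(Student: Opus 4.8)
The plan is to reduce the statement to an application of Theorem~\ref{t6}, the analogue of Hensel's result that was just proved. The key observation is that $fg$ is a polynomial of type $(\mathbf{m},k)$ with $\mathbf{m}=\mathbf{m}_1+\mathbf{m}_2$ and $k=k_1+k_2$ (partial degrees add, and total degrees add), so Theorem~\ref{t6} applies to $fg$: choosing any $\underline{a}_0 \in \underline{S}$ with $(fg)(\underline{a}_0) \ne 0$, there exist $\underline{a}_1,\ldots,\underline{a}_{\lmk-1} \in R^n$ with $d(\underline{S},fg)=(f(\underline{a}_0)g(\underline{a}_0),\ldots,f(\underline{a}_{\lmk-1})g(\underline{a}_{\lmk-1}))$. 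What remains is to arrange that the \emph{same} sequence $\underline{a}_0,\ldots,\underline{a}_{\lmk-1}$ simultaneously computes $d(\underline{S},f)$ and $d(\underline{S},g)$.

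First I would recall from the proof of Theorem~\ref{t6} how the $\underline{a}_j$ are constructed: one factors $\langle (fg)(\underline{a}_0)\rangle = \prod_i P_i^{e_i}$, and then builds a sequence $\{\underline{a}_j\}_{0 \le j < \lmk}$ that is term-by-term congruent modulo $P_i^{e_i+1}$ to a $\nu_i$-ordering of $\underline{S}$, for each prime $P_i$ in that factorization. The crucial point is that this set of primes is large enough for $f$ and for $g$ individually: since $\langle f(\underline{a}_0)\rangle$ and $\langle g(\underline{a}_0)\rangle$ both divide $\langle (fg)(\underline{a}_0)\rangle$, every prime appearing in $d(\underline{S},f)$ or in $d(\underline{S},g)$ already appears among the $P_i$, because $d(\underline{S},f) \mid f(\underline{a}_0)$ and $d(\underline{S},g) \mid g(\underline{a}_0)$. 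Hence for a prime $P = P_i$ with $P^e = w_P(d(\underline{S},f))$, the same argument as in Lemma~\ref{l2} and the proof of Theorem~\ref{t6} shows $P^e$ divides $f(\underline{a}_j)$ for all $j$; and conversely, expanding $f(\underline{x}) = \sum_j b_j B_j(\underline{x})$ in terms of the $B_j$ built from this sequence and applying Lemma~\ref{l1} shows that whatever power of $P$ divides all the $f(\underline{a}_j)$ must divide $f(\underline{s})$ for every $\underline{s}\in \underline{S}$. The identical reasoning applies to $g$. Therefore $d(\underline{S},f) = (f(\underline{a}_0),\ldots,f(\underline{a}_{\lmk-1}))$ and $d(\underline{S},g) = (g(\underline{a}_0),\ldots,g(\underline{a}_{\lmk-1}))$ with the \emph{same} points, and combining with the expression for $d(\underline{S},fg)$ finishes the proof.

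One technical point I would be careful about is that the length $\lmk$ appearing here is the one associated to the type $(\mathbf{m},k) = (\mathbf{m}_1+\mathbf{m}_2, k_1+k_2)$, which is at least as large as the lengths associated to the types of $f$ and of $g$ separately; so using $\lmk$ points is certainly enough to compute $d(\underline{S},f)$ and $d(\underline{S},g)$ (the extra points beyond what each of them needs do no harm, since adding more evaluation points to a g.c.d.\ of a determining set changes nothing). I would also note that we need $f(\underline{a}_0) \ne 0$ and $g(\underline{a}_0) \ne 0$ in order for the $f$- and $g$-versions of Theorem~\ref{t6} to apply; but if $f(\underline{a}_0)=0$ we could not even have $(fg)(\underline{a}_0)\ne 0$, so choosing $\underline{a}_0$ with $(fg)(\underline{a}_0)\ne 0$ automatically guarantees both. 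The main obstacle, such as it is, is purely bookkeeping: verifying that the single construction of $\{\underline{a}_j\}$ driven by $\langle (fg)(\underline{a}_0)\rangle$ is simultaneously adequate for all three fixed divisors, which rests entirely on the divisibility $d(\underline{S},h) \mid h(\underline{a}_0)$ for $h \in \{f, g, fg\}$. No genuinely new idea beyond Theorem~\ref{t6} is required.
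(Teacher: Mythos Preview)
Your proposal is correct and is precisely the intended argument: the paper records only that the corollary ``follows from Theorem~\ref{t6}'', and you have spelled out exactly how---apply Theorem~\ref{t6} to $fg$ using an $\underline{a}_0\in\underline{S}$ with $(fg)(\underline{a}_0)\neq 0$, and observe that the resulting sequence $\{\underline{a}_j\}$ (built from the prime factorization of $\langle (fg)(\underline{a}_0)\rangle$, which contains the primes needed for $f$ and for $g$) also satisfies the hypotheses of the two directions of that proof for $f$ and $g$ separately, since both lie in $K^{\lmk}[\underline{x}]$ and can be expanded in the same basis $\{B_j\}$.
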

 
 \medskip

 The polynomials satisfying  $d(\underline{S}, fg)= d(\underline{S}, f)d(\underline{S},g)$ are closely related to irreducibility in Int$(\underline{S},\Z)$ where $\underline{S} \subseteq \Z$ (see  \cite{Chapman}, Theorem 2.8 ).
Corollary ~\ref{cor:fixed divisor of product} may be useful in that direction.

The following result, proved in the case of a DVR in \cite{Bhar2}, can be 
derived in the single variable case by Theorem \ref{t6}. 
\begin{cor}\label{cor: fixed divisor in case of simultaneous P-ordering} Let 
$S$ 
be a subset of $R$ that admits a simultaneous $P$-ordering,  i.e., a sequence 
$\{a_i\}$ in $S$ which is a $P$-ordering of $S$ for all non-zero primes $P$ and 
$f \in R[x]$ a polynomial of degree $k$. Then 
\begin{equation*}
d(S,f) = (f(a_0),f(a_1),\ldots,f(a_k)).
\end{equation*} 
\end{cor}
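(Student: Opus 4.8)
The plan is to specialize the proof of Theorem~\ref{t6} to the one-variable situation, choosing the auxiliary sequence once and for all to be the simultaneous $P$-ordering $\{a_i\}$, so that only its first $k+1$ terms are needed. First I would settle the bookkeeping: a polynomial $f\in R[x]$ of degree $k$ is of type $(k,k)$, and since $I(S)=\{0\}$ the set $S$ is infinite, whence $\mathrm{Int}_{k,k}(S,V)$ has rank $k+1$, i.e. $l_{k,k}=k+1$, so Theorem~\ref{t6} already yields $k+1$ values of $f$. Moreover, in one variable the determinant $\Delta_{\mathbf{m}}$ of \eqref{definition of new delta} is the Vandermonde determinant, so a $\nu_{\mathbf{m}}$-ordering for the valuation attached to a prime $P$ is precisely a $P$-ordering of $S$; hence a simultaneous $P$-ordering can stand in for the congruence-constructed sequence in the proof of Theorem~\ref{t6} for \emph{every} prime at once, removing the need for that construction. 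If $f=0$ both sides are $\langle 0\rangle$; if $f\neq 0$ then, the $a_i$ being distinct (since $\prod_{i<j}(a_j-a_i)=\Delta_{\mathbf{m}}(a_0,\ldots,a_k)\neq 0$ by $I(S)=\{0\}$) and $\deg f=k$, at least one $f(a_i)\neq 0$, so both ideals under consideration are nonzero and it suffices to compare $P$-adic valuations at each nonzero prime $P$.

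The inclusion $(f(a_0),\ldots,f(a_k))\subseteq d(S,f)$ is clear as each $a_i\in S$. For the reverse, fix a prime $P$ with valuation $\nu$ and set $B_0(x)=1$, $B_j(x)=\prod_{m=0}^{j-1}(x-a_m)$ for $1\leq j\leq k$; these have degrees $0,1,\ldots,k$, so there are unique $b_0,\ldots,b_k\in K$ with $f(x)=\sum_{j=0}^{k}b_jB_j(x)$. Since $\{a_i\}$ is a $P$-ordering, $\nu(B_j(a_j))=\inf_{s\in S}\nu(B_j(s))$ for each $j$, and $B_j(a_i)=0$ for $i<j$; these are the one-variable instances of Lemma~\ref{l1}.

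I would then prove by induction on $j=0,1,\ldots,k$ that $\nu(b_jB_j(a_j))\geq\mu$, where $\mu=\min_{0\leq i\leq k}\nu(f(a_i))$. For $j=0$ this reads $\nu(b_0)=\nu(f(a_0))\geq\mu$. For the step, evaluate $f=\sum_l b_lB_l$ at $a_j$: $f(a_j)=b_jB_j(a_j)+\sum_{l<j}b_lB_l(a_j)$, and for $l<j$ the $P$-ordering property gives $\nu(b_lB_l(a_j))\geq\nu(b_l)+\nu(B_l(a_l))=\nu(b_lB_l(a_l))\geq\mu$ by the inductive hypothesis; as $\nu(f(a_j))\geq\mu$ too, the ultrametric inequality forces $\nu(b_jB_j(a_j))\geq\mu$. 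Finally, for any $s\in S$ and every $j$, $\nu(b_jB_j(s))\geq\nu(b_j)+\nu(B_j(a_j))=\nu(b_jB_j(a_j))\geq\mu$, hence $\nu(f(s))\geq\mu$; since this holds at every prime, $d(S,f)\subseteq(f(a_0),\ldots,f(a_k))$, giving equality.

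I do not anticipate a real obstacle: the substance is just that the valuation estimate underlying Theorem~\ref{t6} persists when its ad hoc sequence is replaced by a simultaneous $P$-ordering, which it does because such an ordering is a $P$-ordering for all $P$ at once. The delicate points are administrative only: checking $l_{k,k}=k+1$ so that exactly $k+1$ values occur, and noting that a $P$-ordering of the necessarily infinite $S$ has distinct terms, again by $I(S)=\{0\}$.
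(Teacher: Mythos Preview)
Your proof is correct and follows precisely the route the paper indicates: the paper does not spell out a proof but simply remarks that the corollary ``can be derived in the single variable case by Theorem~\ref{t6},'' and your argument is exactly that specialization, with the simultaneous $P$-ordering replacing the CRT-constructed sequence of Theorem~\ref{t6} and the observation that $l_{k,k}=k+1$ in one variable. The only difference is that you have supplied the details the paper omits.
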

For some examples of subsets with simultaneous $P$-orderings, see \cite{Bhar3}, 
\cite{Adam1} and \cite{Adam2}.   

To conclude, we would like to remark that Theorems \ref{t6} and \ref{fixed divisor by two image elements} have many 
computational advantages over Theorem  \ref{t2}, Proposition \ref{t4} and Corollary   
\ref{Bhargava's theorem}. 
Firstly, they do not depend on the evaluation of the factorial of 
$\underline{S}$ or its prime factorization. In fact, due to the sharpness of 
Theorems \ref{Bound for FD for polynomial of type (m,k)}, \ref{Evrard's bound} 
and Corollary \ref{Bhargava's theorem}, it might be possible to use these results 
to evaluate the factorial in some cases. Second, there is no additional step of 
computing the coefficients in alternate bases which essentially amounts to 
inverting matrices with coefficients in $R$. Finally, there is the additional 
freedom in the choice of $\underline{a}$ to minimize the number of primes 
involved in the construction of $\underline{a}_i$.

\end{document}